\let\@fnsymbol\@arabic
\definecolor{purple}{rgb}{.5,.1,.7}
 \newtheorem{theorem}{Theorem}[section]
  \newtheorem{Def}[theorem]{Definition}
 \newtheorem{lemma}[theorem]{Lemma}
 \newtheorem{remark}[theorem]{Remark}
 \newtheorem{proposition}[theorem]{Proposition}
 \newtheorem{cor}[theorem]{Corollary}
 \newtheorem{conjecture}[theorem]{Conjecture}
\newcommand{\R}{\mathbb{R}}
 \newcommand{\N}{\mathbb{N}}
 \newcommand{\lam}{\lambda}
 \newcommand{\norm}[2][]{\|#2\|_{#1}}
 \newcommand{\matr}[1]{\begin{pmatrix}#1\end{pmatrix}}
 \newcommand{\tel}[1]{\frac{1}{#1}}
 \DeclareMathOperator{\Sym}{Sym}
 \DeclareMathOperator{\Cof}{Cof}
 \DeclareMathOperator{\dev}{dev}
\newcommand{\Rplus}{\R_+}
\newcommand{\phii}{\varphi}
\newcommand{\set}[1]{\{#1\}}
\newcommand{\Ktilde}{\widetilde{K}}
\newcommand{\nat}{\in\N}
\newcommand{\al}{\alpha}
\def\barr{\begin{array}}
\def\id{1\!\!1}
\def\tr{\textrm{tr}}
\def\dd{\displaystyle}
\def\barr{\begin{array}}
\def\earr{\end{array}}
\def\bec#1{\begin{equation}\label{#1}}
\def\becn{\begin{equation*}}
\def\endec{\end{equation}}
\def\endecn{\end{equation*}}
\newcommand{\limzn}{\lim_{z\searrow 0}}
\newcommand{\limxin}{\lim_{\xi\searrow 0}}
\newcommand{\limae}{\lim_{a\searrow 1}}
\newcommand{\limale}{\lim_{\al\searrow 1}}
\newcommand{\Rinf}{\mathbb{\overline{R}}}
\newcommand{\deltahat}{\widehat \delta}
\newcommand{\lambdahat}{\widehat \lambda}
\newcommand{\muhat}{\widehat \mu}
\begin{document}
\title{The exponentiated Hencky-logarithmic strain energy.\\ Part II:  Coercivity, planar polyconvexity and existence of minimizers}
\author{
Patrizio Neff\thanks{Corresponding author: Patrizio Neff,  \ \ Head of Lehrstuhl f\"{u}r Nichtlineare Analysis und Modellierung, Fakult\"{a}t f\"{u}r Mathematik, Universit\"{a}t Duisburg-Essen,  Thea-Leymann Str. 9, 45127 Essen, Germany, email: patrizio.neff@uni-due.de} \quad
and \quad
Johannes Lankeit\thanks{Johannes Lankeit,  \ \ Institut f\"{u}r Mathematik, Universit\"{a}t Paderborn,  Warburger Str. 100,
33098 Paderborn, Germany, email: johannes.lankeit@math.uni-paderborn.de} \quad
and \quad
Ionel-Dumitrel Ghiba\thanks{Ionel-Dumitrel Ghiba, \ \ \ \ Lehrstuhl f\"{u}r Nichtlineare Analysis und Modellierung, Fakult\"{a}t f\"{u}r Mathematik, Universit\"{a}t Duisburg-Essen, Thea-Leymann Str. 9, 45127 Essen, Germany;  Alexandru Ioan Cuza University of Ia\c si, Department of Mathematics,  Blvd. Carol I, no. 11, 700506 Ia\c si,
Romania; and  Octav Mayer Institute of Mathematics of the
Romanian Academy, Ia\c si Branch,  700505 Ia\c si, email: dumitrel.ghiba@uni-due.de, dumitrel.ghiba@uaic.ro}
  \quad
and\\  \quad Robert Martin\thanks{Robert Martin,  \ \ Lehrstuhl f\"{u}r Nichtlineare Analysis und Modellierung, Fakult\"{a}t f\"{u}r Mathematik, Universit\"{a}t Duisburg-Essen,  Thea-Leymann Str. 9, 45127 Essen, Germany, email: robert.martin@stud.uni-due.de } \quad and  \quad
David Steigmann\thanks{David Steigmann,  \ \ Professor of Mechanical Engineering, University of California at Berkeley,  Berkeley, CA 94720, USA,
e-mail: dsteigman@me.berkeley.edu}\quad }
\maketitle

\begin{abstract}

We consider a family of isotropic volumetric-isochoric decoupled strain energies
\begin{align*}
F\mapsto W_{\rm eH}(F):=\widehat{W}_{\rm eH}(U):=\left\{\begin{array}{lll}
\frac{\mu}{k}\,e^{k\,\|\dev_n\log {U}\|^2}+\frac{\kappa}{2\hat{k}}\,e^{\hat{k}\,[\tr(\log U)]^2}&\text{if}& \det\, F>0,\vspace{2mm}\\
+\infty &\text{if} &\det F\leq 0,
\end{array}\right.\quad
\end{align*}
based on the Hencky-logarithmic (true, natural) strain tensor $\log U$, where $\mu>0$ is the infinitesimal shear modulus, $\kappa=\frac{2\mu+3\lambda}{3}>0$ is the infinitesimal bulk modulus with $\lambda$ the first Lam\'{e} constant, $k,\hat{k}$ are dimensionless parameters, $F=\nabla \varphi$ is the gradient of deformation,  $U=\sqrt{F^T F}$ is the right stretch tensor and $\dev_n\log {U} =\log {U}-\frac{1}{n} \tr(\log {U})\cdot\id$
 is the deviatoric part (the projection onto the traceless tensors) of the strain tensor $\log U$. For small elastic strains the energies reduce to first order to the classical quadratic Hencky energy
 \begin{align*}
 F\mapsto W{_{\rm H}}(F):=\widehat{W}_{_{\rm H}}(U)&:={\mu}\,\|{\rm dev}_n\log U\|^2+\frac{\kappa}{2}\,[{\rm tr}(\log U)]^2,
 \end{align*}
 which is known to be not rank-one convex.

 The main result in this paper is that in plane elastostatics the energies of the family $W_{_{\rm eH}}$ are polyconvex for $k\geq \frac{1}{3}$, $\widehat{k}\geq \frac{1}{8}$, extending a previous finding on its rank-one convexity.   Our method uses  a judicious application of Steigmann's polyconvexity criteria based on the representation of the energy in terms of the principal invariants of the stretch tensor $U$. These energies also satisfy suitable growth and coercivity conditions. We formulate the equilibrium equations and we  prove the existence of minimizers by the direct methods of the calculus of variations.
\\
\vspace*{0.25cm}
\\
{\textbf{Key words:} finite isotropic elasticity,  Hencky strain, logarithmic strain, natural strain, true strain, convexity,  polyconvexity, ellipticity,  volumetric-isochoric split, existence of minimizers, plane elastostatics, coercivity, growth conditions,  existence of minimizers.}
\end{abstract}

\newpage

\tableofcontents

\section{Introduction}
\subsection{Motivation}
In the first part of a series of papers \cite{NeffGhibaLankeit}, we have introduced a nonlinear elastic energy based on certain invariants of the Hencky tensor $\log U$, namely $\|\dev_n\log\,U\|^2$ and $(\tr(\log U))^2$, where $F=\nabla \varphi$ is the gradient of deformation,  $U=\sqrt{F^T F}$ is the right stretch tensor, $\log U$ is the referential (Lagrangian) logarithmic strain tensor,
$
 \dev_n X=X-\frac{1}{n} \,\tr(X)\cdot\id
$
is the deviatoric part (the projection onto the traceless tensors) of the second order tensor $X\in\mathbb{R}^{n\times n}$ and $\norm{\cdot}$ is the Frobenius tensor norm  {(see Section \ref{auxnot} for other notations)}. We have shown that this exponentiated energy expression improves several features of the formulation with respect to mathematical issues regarding well-posedness. In this paper we will discuss the polyconvexity for this family. In order to set the stage, let us briefly recapitulate some useful details. The  considered exponentiated  Hencky-logarithmic strain type energies are
\begin{align}\label{the}
W_{_{\rm eH}}(F):=\widehat{W}_{_{\rm eH}}(U):&=\left\{\begin{array}{lll}
\underbrace{\frac{\mu}{k}\,e^{k\,\|\dev_n\log\,U\|^2}+\frac{\kappa}{2\,\widehat{k}}\,e^{\widehat{k}\,(\tr(\log U))^2}}_{\text{volumetric-isochoric split}}&\ \ \ \!\!\text{if}& \det\, F>0,\vspace{2mm}\\
+\infty &\ \ \ \!\! \text{if} &\det F\leq 0,
\end{array}\right.\\
&=\dd\left\{\begin{array}{lll}
\dd\frac{\mu}{k}\,e^{k\,\|\log \frac{U}{\det U^{1/n}}\|^2}+\frac{\kappa}{2\,\widehat{k}}\,e^{\widehat{k}\,(\log \det U)^2}&\text{if}& \det\, F>0,\vspace{2mm}\\
+\infty &\text{if} &\det F\leq 0,
\end{array}\right.\notag
\end{align}
where $\mu>0$ is the shear (distortional) modulus, $\kappa=\frac{2\mu+3\lambda}{3}>0$ is the bulk modulus with $\lambda$ the first Lam\'{e} constant and $k,\widehat{k}$ are dimensionless parameters.
 The immediate importance of the family \eqref{the} of free-energy  {functions} is seen by looking at  small (but not infinitesimally small) strains. Then the exponentiated Hencky energy $W_{_{\rm eH}}(\cdot)$ reduces to first order to  the classical  quadratic Hencky energy $\widehat{W}_{_{\rm H}}(U)$ based on the logarithmic strain tensor $\log U$:
 \begin{align}\label{th}
W_{_{\rm H}}(F):=\widehat{W}_{_{\rm H}}(U)&:={\mu}\,\|{\rm dev}_n\log U\|^2+\frac{\kappa}{2}\,[{\rm tr}(\log U)]^2.
\end{align}

 Our renewed interest in the Hencky energy is motivated by a recent finding that the Hencky energy (not the logarithmic strain itself) exhibits  a fundamental property. By purely differential geometric reasoning, in  forthcoming papers \cite{NeffEidelOsterbrinkMartin_Riemannianapproach,Neff_Osterbrink_Martin_hencky13,Neff_Nagatsukasa_logpolar13} (see also  {\cite{Neff_log_inequality13,LankeitNeffNakatsukasa}})  it will be shown that
\begin{align}\label{geoprop}
{\rm dist}^2_{{\rm geod}}\left((\det F)^{1/n}\cdot \id, {\rm SO}(n)\right)&={\rm dist}^2_{{\rm geod,\mathbb{R}_+\cdot \id}}\left((\det F)^{1/n}\cdot \id, \id\right)=|\log \det F|^2,\notag\\
{\rm dist}^2_{{\rm geod}}\left( \frac{F}{(\det F)^{1/n}}, {\rm SO}(n)\right)&={\rm dist}^2_{{\rm geod,{\rm SL}(n)}}\left( \frac{F}{(\det F)^{1/n}}, {\rm SO}(n)\right)=\|\dev_n \log U\|^2,
\end{align}
where ${\rm dist}_{{\rm geod}}$ is the canonical left invariant geodesic distance on the Lie group ${\rm GL}^+(n)$ and ${\rm dist}_{{\rm geod,{\rm SL}(n)}}$, ${\rm dist}_{{\rm geod,\mathbb{R}_+\cdot \id}}$ denote the corresponding geodesic distances on the Lie groups ${\rm SL}(n)$ and $\mathbb{R}_+\cdot \id$, respectively (see \cite{Neff_Osterbrink_Martin_hencky13,Neff_Nagatsukasa_logpolar13}).

In the first part \cite{NeffGhibaLankeit} we have summarized the well-known unique  features of the quadratic Hencky strain energy $W{_{_{\rm H}}}$ based exclusively on the natural strain tensor $\log U$.  The Hencky model is definitely one of the most widely used strain energies in the small elastic strain regime \cite{Hencky28a,Hencky29a,Hencky29b,Hencky31,Bruhns01,Bruhns02JE,henann2011large}.
In \cite{NeffGhibaLankeit}, however, we also pointed out that  the quadratic Hencky energy has some serious shortcomings. For example, the quadratic Hencky energy is neither rank-one convex nor does it satisfy any suitable coercivity condition.
These points being more or less well-known, it is clear that there cannot exist a general mathematical well-posedness result for the quadratic Hencky model $W{_{_{\rm H}}}$. Of course, in the vicinity of the identity, an existence proof  for small loads based on the implicit function theorem will always be possible. All in all,  the status of Hencky's quadratic energy is  put into doubt. This state of affairs, on the one hand the preferred  use of the quadratic Hencky energy and its fundamental property \eqref{geoprop}, on the other hand its mathematical shortcomings, motivated our search for a modification of Hencky's energy. Our best candidate for now is $W_{_{\rm eH}}$ defined by \eqref{the}. Up to moderate strains,  {for principal stretches $\lambda_i\in(0.7,1.4)$}, our new exponentiated Hencky formulation \eqref{the} is de facto as good as the quadratic  Hencky model $W{_{_{\rm H}}}$ and in the large strain region it improves several important features from a mathematical point of view. Moreover, some
other properties (see \cite{NeffGhibaLankeit}) such as  uniqueness in the hydrostatic loading problem \cite{Ogden83,chen1996stability}  confirm  the status of
the exponentiated  Hencky formulation as a useful energy in plane elasto-statics and give a new perspective in three dimensions. The main features that have been shown in \cite{NeffGhibaLankeit} is that the exponentiated Hencky energy \eqref{the} satisfies the LH-condition (rank-one convexity) in planar elasto-statics, i.e. for $n=2$. In this paper we aim to complete this investigation by showing that the planar elasto-static formulation is, in fact, polyconvex and satisfies a coercivity estimate which allows us to show the existence of minimizers. Unfortunately, some aspects of the three-dimensional description remain open, since the formulation is not globally rank-one convex.

\subsection{Polyconvexity}

  A very useful  constitutive requirement is Ball's fundamental polyconvexity condition \cite{Ball77,Ball78}. A free energy function $W(F)$ is called  polyconvex if and only if it is expressible in the form
$W(F) =P(F,\Cof F,\det F)$, $P:\mathbb{R}^{19}\rightarrow\mathbb{R}$, where $P(\cdot,\cdot,\cdot)$ is convex. Polyconvexity implies weak-lower semicontinuity, quasiconvexity and rank-one convexity and it implies that the homogeneous solution $\varphi(x)=\overline{F}.\, x, \ x\in\R^3$, is always an energy minimizer to its own Dirichlet boundary conditions.

In fact, polyconvexity is the cornerstone notion for a proof of the existence of minimizers by the direct methods of the calculus of variations
for energy functions satisfying no polynomial growth conditions, which is  the case in nonlinear elasticity since one has the natural requirement
$W(F)\rightarrow\infty$ as $\det F\rightarrow0$. Polyconvexity is best understood for isotropic energy functions, but it is  not restricted to isotropic response.
The polyconvexity condition in the case of space dimension 2 was conclusively discussed  by Rosakis \cite{Rosakis98} and \v{S}ilhav\'{y} \cite{Silhavy97,Silhavy99b,Silhavy02b,Silhavy03,silhavy2002monotonicity,vsilhavy2001rank,SilhavyPRE99}, while the case of arbitrary spatial dimension was studied by Mielke \cite{Mielke05JC}.  The $n$-dimensional case of  the theorem established by Ball \cite[page 367]{Ball77} has been reconsidered by Dacorogna and Marcellini \cite{DacorognaMarcellini}, Dacorogna and Koshigoe \cite{DacorognaKoshigoe} and  Dacorogna and Marechal \cite{DacorognaMarechal}. It was a long standing open question how to extend the notion of polyconvexity in a meaningful way to anisotropic materials \cite{Ball02}. An answer has been provided in a series  of papers \cite{Neff_Vortrag_Schroeder_IUTAM02,HutterSFB02,Neff_Diss00,Balzani_Schroeder_Gross_Neff05,Schroeder_Neff_Ebbing07,cism_book_schroeder_neff09,Schroeder_Neff01,Hartmann_Neff02,Schroeder_Neff_Iutam01,
Schroeder_Neff04,Balzani_Neff_Schroeder05,Schroeder_Neff_Ebbing07,Ebbing_Schroeder_Neff_AAM09}.

\subsection{Approach of this paper}
The main result in this paper is that in plane elastostatics the family of energies $W_{_{\rm eH}}$ given by \eqref{the} is polyconvex for a suitable choice of parameters $k,\hat k$ (Theorem  \ref{mainth}), satisfies $q$-growth coercivity for any $1\leq q<\infty$, (Theorem \ref{thuncoe}) and therefore allows for a complete existence theory (Theorem \ref{mainexist}). This also confirms the status of the quadratic Hencky energy as a useful approximation in plane elasto-statics. Moreover, our family \eqref{the} of energies admits a unique, stress-free reference configuration $\id$, thus $\varphi(x)=x$ is the global minimizer for natural boundary conditions in any dimension.

The sufficiency condition for polyconvexity which we use has been discovered by Steigmann \cite{SteigmannMMS03,SteigmannQJ03}. Eventually, it is based on a polyconvexity criterion of Ball \cite{Ball77}, but it allows one to express polyconvexity directly in terms of the principal isotropic invariants of the right stretch tensor $U$, namely $i_1=\tr\, U, i_2=\det U$ (see also \cite{Davis57,Lewis03,Lewis96,Lewis96b,Borwein2010}). As  it turns out, in plane elastostatics, Steigmann's criterion is already
hidden in another sufficiency criterion for polyconvexity given earlier by Rosakis \cite{Rosakis92}. However,  Steigmann's criterion is clearly not necessary for polyconvexity (see Section \ref{acpse}).

\subsection{Notation}\label{auxnot}
Let us begin with the remark, that although this article is mainly
concerned with the planar (two-dimensional) case, we give some of the
preliminaries in their more general three-dimensional version.
 For $a,b\in\R^n$ we let $\langle {a},{b}\rangle_{\R^n}$  denote the scalar product on $\R^n$ with the
associated vector norm $\|a\|_{\R^n}^2=\langle {a},{a}\rangle_{\R^n}$.
We denote by $\R^{n\times n}$ the set of real $n\times n$ second order tensors, written with
capital letters.
The standard Euclidean scalar product on $\R^{n\times n}$ is given by
$\langle {X},{Y}\rangle_{\R^{n\times n}}=\tr({X Y^T})$, and thus the Frobenius tensor norm is
$\|{X}\|^2=\langle {X},{X}\rangle_{\R^{n\times n}}$. In the following we do not adopt any summing convention and we omit the subscript
$\R^n,\R^{n\times n}$. The identity tensor on $\R^{n\times n}$ will be denoted by $\id$, so that
$\tr{(X)}=\langle {X},{\id}\rangle$. We let $\Sym(n)$ and $\rm PSym(n)$ denote the sets of symmetric and positive definite symmetric tensors respectively and adopt the usual abbreviations of Lie-group theory, i.e.
${\rm GL}(n):=\{X\in\R^{n\times n}\;|\det{X}\neq 0\}$ is the general linear group,
${\rm SL}(n):=\{X\in {\rm GL}(n)\;|\det{X}=1\},$\,  ${\rm GL}^+(n):=\{X\in\R^{n\times n}\;|\det{X}>0\}$   is the group of invertible matrices with positive determinant. The superscript $^T$ is used to denote transposition, and $\Cof A = (\det A)A^{-T}$ is the cofactor of $A\in {\rm GL}^+(3)$. The set of positive real numbers is
denoted by $\R_+:=(0,\infty)$, while $\overline{\R}_+:=\R_+\cup \{\infty\}$.

Let $\Omega\subset{\R^n}$  be a bounded domain with Lipschitz boundary
$\partial\Omega$. Let us consider $W(F)$ to be the strain energy  density function of an elastic material in which $F$ is the deformation gradient  from a reference configuration to a configuration in Euclidean $n$-space; $W(F)$ is measured per unit volume of the reference configuration. The domain of $W(\cdot)$ is ${\rm GL}^+(n)$.  We denote by $C=F^T F$ the right Cauchy-Green strain tensor, by $B=F\, F^T$ the left Cauchy-Green (or Finger) strain tensor, by $U$ the right stretch tensor, i.e. the unique element of ${\rm PSym}(n)$ for which $U^2=C$, and by  $V$ the left stretch tensor, i.e. the unique element of ${\rm PSym}(n)$ for which $V^2=B$. Here,  we are only concerned with rotationally symmetric energy functions (objective and isotropic), i.e.
$
 W(F)=\widehat{W}(Q_1^T\, F\, Q_2)$ for all $F=R\,U=V R\in {\rm GL}^+(n),\  Q_1,Q_2,R\in{\rm SO}(n).
$
For vectors $v=\left(v_1,v_2,v_3\right)^T\ \in\R^3,
$ we define
$
{\rm diag}\, v=\left(
                          \begin{array}{ccc}
                            v_1 & 0 & 0 \\
                            0 & v_2 & 0 \\
                            0 & 0 & v_3 \\
                          \end{array}
                        \right),
$
while for a matrix
$
F=\left(
                          \begin{array}{ccc}
                            F_{11} & F_{12} & F_{13} \\
                            F_{21} & F_{22} & F_{23} \\
                            F_{31} & F_{32} & F_{33} \\
                          \end{array}
                        \right)\in\R^{3\times 3}
$ we let
$
{\rm vect}\, F=(F_{11}, F_{12}, F_{13}, F_{21}, F_{22},F_{23}, F_{31}, F_{32}, F_{33})^T\in \R^9.
$

If the components of the $\R^3$-valued  vector  field $v=\left(v_1,v_2,v_3\right)^T$ are  differentiable in the distributional sense, we define
\begin{align}\label{defgrad}
 {\nabla }\,v=\left(
  \begin{array}{c}
   {\rm grad}^T\,  v_1 \\
    {\rm grad}^T\, v_2 \\
    {\rm grad}^T\, v_3 \\
  \end{array}
\right)\, ,
\end{align}
while for a weakly differentiable scalar function $(x_1,x_2,x_3)\mapsto f(x_1,x_2,x_3)\in \R$ the gradient is the column vector
\begin{align}\label{defgrad1}
\nabla f:={\rm grad} f=\left(\frac{\partial f}{\partial x_1},\frac{\partial f}{\partial x_2},\frac{\partial f}{\partial x_3}\right)^T\in \R^3.
\end{align}

In three dimensions, we consider the singular values (principal stretches) $\lambda_1$, $\lambda_2$, $\lambda_3$ of $F$, i.e. the eigenvalues  of  $U$, and the principal isotropic invariants of $U$
\begin{align}\label{ifl}
i_1&=\lambda_1+\lambda_2+\lambda_3=\tr(U)\,, \notag\\
i_2&=\lambda_1\lambda_2+\lambda_2\lambda_3+\lambda_3\lambda_1=\tr(\Cof U)\,,\\
i_3&=\lambda_1\lambda_2\lambda_3=\det U\,.\notag
\end{align}
 Every isotropic and frame-invariant function of $F$ is thus expressible in the form
\begin{align*}
W(F) &=\widehat{W}(U)=g(\lambda_1, \lambda_2, \lambda_3)=\psi(i_1,i_2,i_3)=\Phi(\lambda_1, \lambda_2, \lambda_3,\lambda_1 \lambda_2,\lambda_2 \lambda_3, \lambda_3 \lambda_1,\lambda_1 \lambda_2 \lambda_3)=P(F,\Cof F, \det F).
\end{align*}
The functions $\widehat{W}, g,\psi$ are uniquely determined by $W$, while $\Phi$ and $P$ are not unique.

We denote by $D^2_{\lambda}g$  the Hessian matrix of $g$ with respect to the variables $(\lambda_1,\lambda_2,\lambda_3)$, while by $D^2_i \psi$ we denote the Hessian matrix of $\psi$ with respect to the principal invariants  $(i_1,i_2,i_3)$. We also consider the  third order tensor
$
\mathbb{D}^2_\lambda i=(D^2_\lambda i_1|D^2_\lambda i_2|D^2_\lambda i_3),
$
where $D^2_\lambda i_1$, $D^2_\lambda i_2$ and $D^2_\lambda i_3$ denote the Hessian matrices of $i_1, i_2, i_3$ with respect to $\lambda$.

\section{Preliminary results}\setcounter{equation}{0}
\subsection{The  sum of squared logarithms
inequality}

In this paper we also use the
sum of squared logarithms inequality   recently demonstrated  in \cite{Neff_log_inequality13}:
\begin{theorem}\label{Noging}{\rm (The  sum of squared logarithms
inequality in 3D \cite{Neff_log_inequality13})}\\
Let $\lam_1,\lam_2,\lam_3,\mu_1,\mu_2,\mu_3\in\Rplus$ be such that
\begin{align}\label{sumofsquaredlog_condition}
 \lam_1+\lam_2+\lam_3&\leq \mu_1+\mu_2+\mu_3\,\notag,\\
  \lam_1\,\lam_2+\lam_1\,\lam_3+\lam_2\,\lam_3&\leq \mu_1\,\mu_2+\mu_1\,\mu_3+\mu_2\,\mu_3\,,\\
 \lam_1\,\lam_2\,\lam_3&=\mu_1\,\mu_2\,\mu_3\,\notag.
\end{align}
Then the following inequality holds:
\begin{align}
 \log^2\lam_1+\log^2\lam_2+\log^2\lam_3\leq \log^2\mu_1+\log^2\mu_2+\log^2\mu_3.
\end{align}
\end{theorem}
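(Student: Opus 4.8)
The plan is to encode the three hypotheses as a constrained optimization and then whittle it down, by a Kuhn--Tucker analysis, to a one-parameter problem. Write $e_1(x)=x_1+x_2+x_3$, $e_2(x)=x_1x_2+x_2x_3+x_3x_1$, $e_3(x)=x_1x_2x_3$; the hypotheses say $e_1(\lam)\le e_1(\mu)$, $e_2(\lam)\le e_2(\mu)$ and $e_3(\lam)=e_3(\mu)=:d$. (Since $\sum\log\lam_i=\sum\log\mu_i$, replacing $(\lam_i,\mu_i)$ by $(t\lam_i,t\mu_i)$ preserves all hypotheses and the conclusion, so one may think of the common product as normalized, although this is not needed.) Fix $\mu$ and maximize $R(\lam):=\sum_{i=1}^3\log^2\lam_i$ over the feasible set $\mathcal F:=\{\lam\in\Rplus^3:e_1(\lam)\le e_1(\mu),\ e_2(\lam)\le e_2(\mu),\ e_3(\lam)=d\}$. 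This set is bounded (the first inequality forces $\lam_i\le e_1(\mu)$), closed in $\Rplus^3$, and bounded away from $\partial\R^3_+$ (since $e_3=d>0$ together with $\lam_i\le e_1(\mu)$ forbids $\lam_i\to 0$), hence compact; so $R$ attains a maximum at some $\lam^\ast\in\mathcal F$, and since $\lam\in\mathcal F$ it suffices to prove $R(\lam^\ast)\le R(\mu)$.

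Next, Kuhn--Tucker at $\lam^\ast$. The point $(d^{1/3},d^{1/3},d^{1/3})$ is the unique \emph{minimizer} of $R$ on $\{e_3=d\}$ (the variance inequality $\sum\log^2\lam_i\ge\tfrac13(\sum\log\lam_i)^2$), so the maximizer $\lam^\ast$ is not this central configuration unless $\mathcal F$ reduces to it, in which case $\lam^\ast=\mu$ and we are done; otherwise at least one inequality constraint is active at $\lam^\ast$, and away from the central configuration the relevant gradients are independent, so Kuhn--Tucker applies. If \emph{both} inequalities are active, then $\lam^\ast$ and $\mu$ share all of $e_1,e_2,e_3$, hence agree as multisets and $R(\lam^\ast)=R(\mu)$. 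If \emph{exactly one} is active, the stationarity condition reads componentwise $2\log\lam_i^\ast=a\,\lam_i^\ast+c\,d/\lam_i^\ast$ when $e_1$ is active, so after multiplying through, $\lam_1^\ast,\lam_2^\ast,\lam_3^\ast$ are roots of $2\log t=(\text{affine in }t)$; by strict concavity of $t\mapsto2\log t$ this has at most two solutions, so two of the stretches coincide. The case where $e_2$ (rather than $e_1$) is active is reduced to the previous one by the cofactor substitution $\lam\mapsto\Cof\lam:=(\lam_2\lam_3,\lam_3\lam_1,\lam_1\lam_2)$: using $e_3(\lam)=e_3(\mu)$ one checks that it sends the hypotheses to hypotheses with the roles of $e_1$ and $e_2$ interchanged, that it preserves ``two stretches coincide'', and that it shifts $R$ by the same additive constant $\log^2 d$ for $\lam$ and for $\mu$ (because $\sum_{\mathrm{pairs}}\log^2(\lam_j\lam_k)=\log^2 d+\sum_i\log^2\lam_i$ when $\lam_1\lam_2\lam_3=d$), so it preserves the inequality to be proved \emph{and} makes the right side above affine. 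Hence we may assume $\lam^\ast=(\tau,\tau,s-2\tau)$ with $s:=e_1(\mu)$ and $\tau$ a positive root of $2t^3-st^2+d=0$; in the nondegenerate case $d<s^3/27$ this cubic has one negative root $\tau_0$ and two positive roots $\tau_1<\tau_2$, both lying in $(0,s/2)$ so that $s-2\tau_i>0$.

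Now run the same machinery on the $\mu$-side: minimize $R$ over the set $\{\nu\in\Rplus^3:e_1(\nu)=s,\ e_2(\nu)\ge e_2(\lam^\ast),\ e_3(\nu)=d\}$ (compact, since $e_1=s$ bounds it; nonempty, since it contains $\mu$ and $\lam^\ast$). Either the minimizer $\nu^\ast$ has $e_2(\nu^\ast)=e_2(\lam^\ast)$, hence equals $\lam^\ast$ as a multiset and $R(\mu)\ge R(\nu^\ast)=R(\lam^\ast)$ at once; or only the equalities $e_1=s,\ e_3=d$ are active and the concavity argument again forces two coincident entries, so $\nu^\ast=(\tau_j,\tau_j,s-2\tau_j)$ for one of the \emph{same} two roots $\tau_j\in\{\tau_1,\tau_2\}$. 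A short computation gives $e_2(\tau,\tau,s-2\tau)=2s\tau-3\tau^2$ and $e_2(\tau_1,\tau_1,\cdot)-e_2(\tau_2,\tau_2,\cdot)=(\tau_1-\tau_2)\bigl(2s-3(\tau_1+\tau_2)\bigr)$; by Vieta $\tau_1+\tau_2=\tfrac s2-\tau_0$, and the cubic is negative at $t=-s/6$ (indeed $2(-s/6)^3-s(s/6)^2+d=d-s^3/27<0$, so $\tau_0>-s/6$ and $2s-3(\tau_1+\tau_2)=\tfrac s2-3|\tau_0|>0$), whence $e_2(\tau_1,\tau_1,\cdot)<e_2(\tau_2,\tau_2,\cdot)$. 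Comparing which of the two roots $\lam^\ast$ and $\nu^\ast$ correspond to, and using $e_2(\nu^\ast)\ge e_2(\lam^\ast)$, one sees the only case not already settled is $\lam^\ast=(\tau_1,\tau_1,s-2\tau_1)$, $\nu^\ast=(\tau_2,\tau_2,s-2\tau_2)$, so the theorem comes down to the single inequality $2\log^2\tau_1+\log^2(s-2\tau_1)\le 2\log^2\tau_2+\log^2(s-2\tau_2)$.

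Finally, the crux. Using $s-2\tau=d/\tau^2$, one has $2\log^2\tau+\log^2(s-2\tau)=6(\log\tau)^2-4(\log d)(\log\tau)+\log^2 d$, a quadratic in $\log\tau$; subtracting the values at $\tau_1$ and $\tau_2$ and dividing by $\log\tau_1-\log\tau_2<0$, the desired inequality is equivalent to $3(\log\tau_1+\log\tau_2)\ge 2\log d$, i.e.\ $\tau_1\tau_2\ge d^{2/3}$. Because the cubic $2t^3-st^2+d$ \emph{has no linear term}, Vieta gives $\tau_0\tau_1+\tau_1\tau_2+\tau_2\tau_0=0$ and $\tau_0\tau_1\tau_2=-d/2$, hence $\tau_1\tau_2=-\tau_0(\tau_1+\tau_2)=d/(2|\tau_0|)$, and $\tau_1\tau_2\ge d^{2/3}$ becomes simply $|\tau_0|\le d^{1/3}/2$. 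Evaluating the cubic at $-d^{1/3}/2$ gives $2(-d^{1/3}/2)^3-s(d^{1/3}/2)^2+d=\tfrac14 d^{2/3}(3d^{1/3}-s)<0$ since $d<s^3/27$; as $2t^3-st^2+d\to-\infty$ as $t\to-\infty$, is positive at $t=0$, and crosses zero exactly once on $(-\infty,0)$, this forces $\tau_0>-d^{1/3}/2$, completing the proof. The step I expect to be the main obstacle is precisely this last one: a priori the reduced two-stretch case looks like an intractable transcendental inequality, and the whole point is that the volumetric/isochoric structure (here visible as the vanishing linear coefficient of the reduced cubic) makes it collapse to a one-line sign check; apart from that, the care required is routine --- verifying constraint qualification away from the central configuration, isolating the degenerate cases $d=s^3/27$ and $\lam=\mu$, and the bookkeeping of the cofactor duality. (An appealing alternative to the whole scheme is to note, via the mean-value theorem for divided differences applied to $x\mapsto x\log x$ and $x\mapsto\log x$, that $R$ viewed as a function of $(e_1,e_2)$ with $e_3$ fixed has both partial derivatives strictly positive wherever the stretches are real and positive; the difficulty then becomes connecting the two data points by a path along which $e_1$ and $e_2$ are simultaneously nondecreasing, which is genuinely delicate since the admissible region in $(e_1,e_2)$ is not monotone-closed.)
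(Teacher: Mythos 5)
Your proposal is correct, but note first that this paper never proves Theorem \ref{Noging}: the statement is imported as a known result from \cite{Neff_log_inequality13} and used as a black box (e.g.\ in Proposition \ref{mon-dev}), so there is no in-paper proof to measure yours against; what you give is a self-contained argument, and it is not the argument of the cited source, whose proof proceeds quite differently and at much greater length, not via your Lagrange-multiplier reduction. I checked the skeleton and the key computations and they hold up: compactness of the two feasible sets; the fact that the only critical point of $\sum_i\log^2\lambda_i$ on $\{e_3=d\}$ is the central point, so a nontrivial maximizer has an active inequality; the constraint qualification away from the central point; the cofactor duality $\lambda\mapsto(d/\lambda_1,d/\lambda_2,d/\lambda_3)$, which swaps the two inequality constraints, shifts $\sum_i\log^2\lambda_i$ by $\log^2 d$ on both sides, and lets you assume the $e_1$-constraint is the active one; the identity $e_2(\tau,\tau,s-2\tau)=2s\tau-3\tau^2$ together with $2s-3(\tau_1+\tau_2)=\frac{s}{2}-3|\tau_0|>0$, which follows from the value $d-\frac{s^3}{27}<0$ of the cubic at $t=-\frac{s}{6}$; and the final Vieta step $\tau_1\tau_2=\frac{d}{2|\tau_0|}>d^{2/3}$ from the value $\frac{1}{4}d^{2/3}(3d^{1/3}-s)<0$ of the cubic at $t=-\frac{d^{1/3}}{2}$, which is exactly the inequality $3(\log\tau_1+\log\tau_2)\geq 2\log d$ needed to compare the two reduced two-stretch configurations. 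What your route buys is that all the transcendental content collapses to these two sign checks, at the price of optimization boilerplate (existence of extremizers, LICQ, the degenerate cases $d=s^3/27$ and a singleton feasible set), all of which does go through; the monotonicity-in-$(e_1,e_2)$ alternative you mention at the end is closer in spirit to the published treatments and carries exactly the path-connection difficulty you identify. One slip to fix: in the $e_1$-active case the stationarity is $\frac{2\log\lambda_i^*}{\lambda_i^*}=a+\frac{c\,d}{\lambda_i^*}$, i.e.\ $2\log\lambda_i^*=a\,\lambda_i^*+c\,d$ after multiplying by $\lambda_i^*$; the equation you display carries a spurious factor $1/\lambda_i^*$, but the conclusion you draw from it (the $\lambda_i^*$ solve $2\log t=\text{affine in }t$, hence take at most two distinct values by strict concavity of the logarithm) is what the corrected computation gives, so nothing downstream is affected.
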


\begin{theorem}\label{Noging2}{\rm (The  sum of squared logarithms
inequality in 2D \cite{Neff_log_inequality13})}
Let $\lam_1,\lam_2,\mu_1,\mu_2\in\Rplus$ be such that
$
 \lam_1+\lam_2\leq \mu_1+\mu_2\,,\
 \lam_1\,\lam_2=\mu_1\,\mu_2\,.
$
Then the following inequality holds:
$
 \log^2\lam_1+\log^2\lam_2\leq \log^2\mu_1+\log^2\mu_2.
$
\end{theorem}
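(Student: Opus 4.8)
The plan is to reduce the constrained comparison to a one-parameter monotonicity statement by using the fixed product as a normalization. First I would set $p:=\lam_1\lam_2=\mu_1\mu_2>0$ and, assuming without loss of generality $\lam_1\geq\lam_2$ and $\mu_1\geq\mu_2$, introduce the parametrization
\begin{align*}
\lam_1=\sqrt{p}\,e^{s},\quad \lam_2=\sqrt{p}\,e^{-s},\qquad \mu_1=\sqrt{p}\,e^{t},\quad \mu_2=\sqrt{p}\,e^{-t},
\end{align*}
with $s=\tfrac12\log(\lam_1/\lam_2)\geq 0$ and $t=\tfrac12\log(\mu_1/\mu_2)\geq 0$; this is legitimate since all four numbers are positive.

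Next I would record the two elementary identities that make the substitution useful:
\begin{align*}
\lam_1+\lam_2=2\sqrt{p}\,\cosh s,\qquad \log^2\lam_1+\log^2\lam_2=\Big(\tfrac12\log p+s\Big)^2+\Big(\tfrac12\log p-s\Big)^2=\tfrac12\log^2 p+2s^2,
\end{align*}
and the analogous ones for $\mu_1,\mu_2$ with $t$ replacing $s$. The point is that, with $p$ held fixed, the left-hand side of the hypothesis is a strictly increasing function of $s$, while the left-hand side of the conclusion is also a strictly increasing function of $s$ (the cross terms $\pm\log p\cdot s$ cancel). From $\lam_1+\lam_2\leq\mu_1+\mu_2$ and strict monotonicity of $\cosh$ on $[0,\infty)$ I would then deduce $s\leq t$, and substituting into the second identity gives
\begin{align*}
\log^2\lam_1+\log^2\lam_2=\tfrac12\log^2 p+2s^2\leq\tfrac12\log^2 p+2t^2=\log^2\mu_1+\log^2\mu_2,
\end{align*}
which is the claimed inequality.

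I do not expect any real obstacle here: the two-dimensional case is essentially algebraic once the right change of variables is in place, and the only analytic input is the monotonicity of $\cosh$ on the half-line. The genuinely delicate statement is the three-dimensional version (Theorem \ref{Noging}), where fixing only the product $\lam_1\lam_2\lam_3$ leaves a truly two-dimensional family and the neat decoupling above is no longer available; that is the case for which the full argument of \cite{Neff_log_inequality13} is needed. Finally, I would note as a byproduct of the proof above that equality in the hypothesis forces $\cosh s=\cosh t$, hence $s=t$ and $\{\lam_1,\lam_2\}=\{\mu_1,\mu_2\}$, so the conclusion holds with strict inequality whenever $\lam_1+\lam_2<\mu_1+\mu_2$.
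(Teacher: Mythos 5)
Your proof is correct and complete: the normalization $\lambda_1=\sqrt{p}\,e^{s}$, $\lambda_2=\sqrt{p}\,e^{-s}$ (and likewise with $t$ for $\mu_1,\mu_2$) turns the hypothesis into $\cosh s\leq\cosh t$ with $s,t\geq 0$ and the conclusion into $\tfrac12\log^2p+2s^2\leq\tfrac12\log^2p+2t^2$, and every step in between is valid, including the equality discussion at the end. Note that the present paper does not prove Theorem \ref{Noging2} at all — it is quoted from the reference \cite{Neff_log_inequality13} — so there is no in-paper argument to compare against; your argument is precisely the elementary mechanism (fix the product, observe that both the sum and the sum of squared logarithms are monotone in the half-difference of the logarithms) by which the two-dimensional case is handled there, and your closing remark that the genuinely delicate statement is the three-dimensional Theorem \ref{Noging}, where the constraint set is no longer one-parameter, is accurate.
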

For the general $n$-dimensional case, we consider the elementary symmetric polynomials
\begin{align*}
e_k(X_1,X_2,...,X_n)=\sum\limits_{1\leq j_1<j_2<...<j_k\leq n}X_{j_1}X_{j_2}...X_{j_k},\qquad k=1,...,n
\end{align*}
and we give the conjecture:
\begin{conjecture}{\rm (The  sum of squared logarithms
inequality in $\R_+^n$, $n\in\N$)}\label{conjecture_sumofsquaredlog}
Let $\lam_1,\lam_2,...,\lam_n,\mu_1,\mu_2,...,\mu_n\in\Rplus$ be such that
\begin{align*}
 e_k(\lam_1,\lam_2,...,\lam_n)&\leq e_k(\mu_1,\mu_2,...,\mu_n), \quad  k=1,...,n-1,\\
 e_n(\lam_1,\lam_2,...,\lam_n)&= e_n(\mu_1,\mu_2,...,\mu_n).\notag
\end{align*}
Then the following inequality holds
$$
 \sum\limits_{k=1}^n \log^2\lam_k\leq \sum\limits_{k=1}^n \log^2\mu_k.
$$
\end{conjecture}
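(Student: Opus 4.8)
The plan is to reduce the asserted inequality to the nonnegativity of one explicit integral. To the two $n$-tuples associate the monic polynomials $p(x):=\prod_{i=1}^n(x+\lambda_i)$ and $q(x):=\prod_{i=1}^n(x+\mu_i)$, whose coefficients are the elementary symmetric functions, $p(x)=\sum_{k=0}^n e_k(\lambda_1,\dots,\lambda_n)\,x^{n-k}$ and similarly for $q$. The hypotheses say exactly that $q(x)-p(x)=\sum_{k=1}^{n-1}\bigl(e_k(\mu_1,\dots,\mu_n)-e_k(\lambda_1,\dots,\lambda_n)\bigr)x^{n-k}$ has nonnegative coefficients, so $q(x)\ge p(x)$ for all $x\ge 0$; moreover $p(x)=\prod(x+\lambda_i)>0$ on $[0,\infty)$, and $q(0)=e_n(\mu)=e_n(\lambda)=p(0)$. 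Hence $x\mapsto\log\frac{q(x)}{p(x)}$ is nonnegative on $(0,\infty)$, it is $O(x)$ as $x\downarrow 0$ (here the equality $e_n(\lambda)=e_n(\mu)$ is used) and $O(1/x)$ as $x\to\infty$ (because $q-p$ has degree $\le n-1$), so $x\mapsto\frac1x\log\frac{q(x)}{p(x)}$ is integrable on $(0,\infty)$ and $\int_0^\infty\frac1x\log\frac{q(x)}{p(x)}\,dx\ge 0$.

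Everything then rests on the identity
\[
\int_0^\infty\frac1x\,\log\frac{q(x)}{p(x)}\,dx=\frac12\Bigl(\sum_{i=1}^n\log^2\mu_i-\sum_{i=1}^n\log^2\lambda_i\Bigr),
\]
which combined with the previous nonnegativity yields the Conjecture immediately. I would prove this identity by a regularized single-factor computation. From $\frac{d}{dx}\mathrm{Li}_2(-x/a)=-\frac1x\log(1+\tfrac xa)$ one gets the antiderivative $\int\frac{\log(x+a)}{x}\,dx=\log a\,\log x-\mathrm{Li}_2(-x/a)+C$, and the dilogarithm reflection formula $\mathrm{Li}_2(-y)+\mathrm{Li}_2(-1/y)=-\tfrac{\pi^2}{6}-\tfrac12\log^2 y$ ($y>0$) gives $\log a\,\log x-\mathrm{Li}_2(-x/a)+\mathrm{Li}_2(-x)\to\tfrac12\log^2 a$ as $x\to\infty$. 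Hence for $\varepsilon>0$,
\[
\int_\varepsilon^\infty\frac1x\log\frac{x+a}{x+1}\,dx=\tfrac12\log^2 a-\log a\,\log\varepsilon+o(1)\qquad(\varepsilon\downarrow 0).
\]
Writing $\log\frac{q(x)}{p(x)}=\sum_i\log\frac{x+\mu_i}{x+1}-\sum_i\log\frac{x+\lambda_i}{x+1}$, summing the previous display, and letting $\varepsilon\downarrow 0$, the divergent parts $-\log\varepsilon\bigl(\sum\log\mu_i-\sum\log\lambda_i\bigr)=-\log\varepsilon\,\log\tfrac{e_n(\mu)}{e_n(\lambda)}$ cancel exactly because $e_n(\lambda)=e_n(\mu)$, leaving the claimed identity.

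As a fallback I would derive the same identity by contour integration: $\sum\log^2\lambda_i=\frac1{2\pi i}\oint_\Gamma\log^2(-z)\,\frac{p'(z)}{p(z)}\,dz$ for a loop $\Gamma$ enclosing the roots $-\lambda_i$ and avoiding the branch cut $[0,\infty)$ of $\log^2(-z)$; forming the difference with the $q$-integral one makes the integrand $\log^2(-z)\frac{d}{dz}\log\frac{q(z)}{p(z)}$ decay at $0$ and at $\infty$, collapses $\Gamma$ onto $[0,\infty)$ picking up the jump $4\pi i\log x$ of $\log^2(-z)$, and integrates by parts; once more it is the constraint $e_n(\lambda)=e_n(\mu)$ that annihilates the boundary contribution at $x=0$.

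The step I expect to be the main obstacle is pinning down the exact constant and sign in the integral identity. The individual single-factor integrals $\int_0^\infty\frac1x\log\frac{x+a}{x+1}\,dx$ diverge logarithmically at $x=0$, so the computation must be regularized and one must verify that the $\log\varepsilon$-terms cancel; this cancellation is precisely where the product constraint $e_n(\lambda)=e_n(\mu)$ enters and it is indispensable, since the inequality is simply false when $e_n(\lambda)<e_n(\mu)$ is allowed (take $\mu=(1,\dots,1)$ and $\lambda=(\varepsilon,\dots,\varepsilon)$). The remaining points — coefficientwise $q\ge p$, the $O(x)$ and $O(1/x)$ bounds, and the elementary dilogarithm identities — are routine.
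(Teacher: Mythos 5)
Your argument is correct, but there is nothing in the paper to compare it with: the statement you proved is recorded there only as a \emph{conjecture} (the general sum of squared logarithms inequality), the paper offering proofs of nothing beyond the $n=2$ and $n=3$ cases, which are themselves only cited from the reference [Bîrsan--Neff--Lankeit]. Your proposal therefore goes beyond the paper. Checking the steps: the coefficientwise inequality $q-p\ge 0$ on $[0,\infty)$ is immediate from the hypotheses; the constant terms cancel because $e_n(\lambda)=e_n(\mu)$, so $\log\frac{q(x)}{p(x)}=O(x)$ at $0$ and $O(1/x)$ at infinity, making $\frac1x\log\frac{q(x)}{p(x)}$ integrable and nonnegative; the antiderivative $\int\frac{\log(x+a)}{x}\,dx=\log a\,\log x-\mathrm{Li}_2(-x/a)+C$ together with the inversion formula $\mathrm{Li}_2(-y)+\mathrm{Li}_2(-1/y)=-\frac{\pi^2}{6}-\frac12\log^2 y$ does give
\begin{equation*}
\int_\varepsilon^\infty\frac1x\log\frac{x+a}{x+1}\,dx=\tfrac12\log^2 a-\log a\,\log\varepsilon+o(1)\qquad(\varepsilon\searrow 0),
\end{equation*}
and after summing over the factors of $q$ and $p$ the $\log\varepsilon$-terms carry the coefficient $\log\frac{e_n(\mu)}{e_n(\lambda)}=0$, so they cancel exactly and the identity
\begin{equation*}
\int_0^\infty\frac1x\log\frac{q(x)}{p(x)}\,dx=\tfrac12\Bigl(\sum_{i=1}^n\log^2\mu_i-\sum_{i=1}^n\log^2\lambda_i\Bigr)
\end{equation*}
follows; nonnegativity of the left-hand side then yields the claimed inequality. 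Your remark that the equality constraint on $e_n$ is indispensable (and is precisely what kills the divergence) is also on target. For context: this integral-representation strategy is essentially the route by which the full conjecture was later settled in the literature (proofs of the general sum of squared logarithms inequality proceed via such representations of $\sum_i\log^2$ in terms of the characteristic polynomial), whereas the two- and three-dimensional theorems quoted in the paper were originally obtained by more hands-on monotonicity arguments in the variables $e_k$. So your proof is both valid and strictly stronger than what the paper uses; the only points worth writing out carefully in a final version are the uniform justification of the $\varepsilon\searrow 0$ limit (finitely many terms, so no issue) and the integrability bounds at $0$ and $\infty$, which you already state correctly.
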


In the next section we outline the polyconvexity criterion established by Steigmann \cite{SteigmannQJ03} in terms of the principal invariants $(i_1,i_2,i_3)$ of the right stretch tensor $U$. Using  Steigmann's criterion and the criterion given by  Lemma \ref{lem}, we are able to prove the polyconvexity  of the exponentiated Hencky energy in plane finite elastostatics.

\subsection{Sufficiency criteria for polyconvex strain energies}\label{acpse}

A function $W(F)$ is polyconvex if and
only if it is expressible in the form
$W(F) =P(F,\Cof F, \det F)$, where $P(\cdot,\cdot,\cdot)$ is convex.  The notion of polyconvexity has been introduced into the framework of
elasticity by  John Ball in his seminal paper \cite{Ball77}. Various nonlinear issues, results and extensive references are collected in Dacorogna \cite{Dacorogna08}. In general, a function $\Phi(\lambda_1,\lambda_2, \lambda_3, \lambda_1\lambda_2, \lambda_2\lambda_3,\lambda_3\lambda_1,\lambda_1\lambda_2 \lambda_3)$ is polyconvex if it is convex, symmetric and monotone increasing (separately) in its first $6$ arguments, see Theorem \ref{critBalleng3}. However, it is known that the monotonicity in the first $6$ arguments is not necessary \cite{Mielke05JC}. Since there is no easy way to represent the energy in terms of $(F,\Cof F, \det F)$, we take the detour of the invariant representation. From \cite{SteigmannQJ03} we have the following result based on the interesting observation that
 the invariants $
i_1=\tr (U),\  i_2=\tr(\Cof U),\  i_3=\det U
$
 are convex with respect to $F$, $\Cof F$ and $\det F$, respectively (see \cite{SteigmannQJ03}, page 485).

\begin{proposition}{\rm (Steigman's polyconvexity criterion in 3D)}\label{sc1}
Suppose that
\begin{itemize}
\item[i)] $\psi(i_1,i_2,i_3)$ is a convex function of $(i_1,i_2,i_3)$ jointly\footnote{The domain in which $\psi(i_1,i_2,i_3)$ is defined is the domain for which $\lambda_1>0,\, \lambda_2>0,\, \lambda_3>0$, i.e. the equation $\lambda^3-i_1\lambda^2+i_2 \lambda-i_3=0$ has three positive real solutions. But this domain is not convex.  Therefore, it would be more adequate to say that $\psi(i_1,i_2,i_3)$ is convex  in the sense of Busemann, Ewald and Shephard's definition \cite{Busemann}, i.e. $\psi$   can be extended to a convex function defined on the convex hull of its domain of definition.}, and
\item[ii)] $\psi(i_1,i_2,i_3)$ is a non-decreasing function\footnote{If $\psi$ is differentiable, then this condition means that $\partial _{i_1}\psi(i_1,i_2,i_3)\geq 0,\partial _{i_2}\psi(i_1,i_2,i_3)\geq 0$ for all $(i_1,i_2,i_3)\in\mathbb{R}^3$ for which  the equation $\lambda^3-i_1\lambda^2+i_2 \lambda-i_3=0$ has three positive real solutions. } of $i_1$ and $i_2$, separately.
\end{itemize}
Then $W(F)=\psi(i_1,i_2,i_3)$ is polyconvex.
\end{proposition}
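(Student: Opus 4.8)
The plan is to produce a convex function $P\colon\R^{9}\times\R^{9}\times\R\to\R\cup\{+\infty\}$ with $W(F)=\psi(i_1,i_2,i_3)=P(F,\Cof F,\det F)$, which is precisely polyconvexity in Ball's sense. Everything rests on recasting the observation of \cite{SteigmannQJ03} (p.~485) as three facts about the invariants. First, $i_1=\tr U=\tr\sqrt{F^TF}$ is the sum of the singular values of $F$, i.e.\ the nuclear (trace) norm of $F$, hence a \emph{convex} function of $F$. Second, from $(\Cof F)^T\Cof F=\Cof(F^TF)=(\Cof U)^2$ together with $\Cof U\in\PSym(3)$ one gets $\tr\sqrt{(\Cof F)^T\Cof F}=\tr(\Cof U)=i_2$, so $i_2$ is the nuclear norm of $\Cof F$, again \emph{convex}, now as a function of $\Cof F$. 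Third, $i_3=\det U=\det F$ is (trivially) \emph{affine} in the slot $\det F$.

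Next I would invoke the standard composition rule for convexity: if $h=(h_1,\dots,h_m)$ has each component either convex --- with the outer function non-decreasing in the corresponding slot --- or affine, and if $g$ is convex, then $g\circ h$ is convex. Here the inner map is $(X,Y,z)\mapsto\bigl(\tr\sqrt{X^TX},\,\tr\sqrt{Y^TY},\,z\bigr)$, which is convex in its first two slots by the previous paragraph and affine in the third, and the outer function is $\psi$, which is convex by hypothesis (i) and non-decreasing in $i_1,i_2$ by hypothesis (ii); note that no monotonicity of $\psi$ in $i_3$ is required, exactly because $i_3$ enters affinely. Hence
\[
P(X,Y,z):=\psi\bigl(\tr\sqrt{X^TX},\;\tr\sqrt{Y^TY},\;z\bigr)
\]
is convex, and substituting $(X,Y,z)=(F,\Cof F,\det F)$ returns $\psi(i_1,i_2,i_3)=W(F)$ by the three identities above.

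The single point requiring genuine care --- and the one I expect to be the only real obstacle --- is the domain issue flagged in the footnote to the statement: $\psi$ is a priori defined only on the \emph{non-convex} set $D=\{(i_1,i_2,i_3):\lambda^3-i_1\lambda^2+i_2\lambda-i_3\text{ has three positive real roots}\}$, whereas the composition rule needs a convex function on a convex set containing the range $[0,\infty)^2\times\R$ of the inner map. Here hypothesis (i) is to be read in the Busemann--Ewald--Shephard sense, and I would replace $\psi$ by its monotone convex envelope --- the convex hull of the function obtained from $\psi$ on $D$ by relaxing the first two invariants to inequality constraints --- extended by $+\infty$ outside its (convex) domain. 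Using convexity and monotonicity of $\psi$ on $D$ one checks that this envelope equals $\psi$ on $D$, is convex on all of $\R^3$, and is non-decreasing in $i_1$ and $i_2$; then $P$ defined as this envelope composed with the inner map is the required convex function on $\R^{19}$ agreeing with $W$ on the admissible set $\{(F,\Cof F,\det F)\}\subset D$, which completes the argument.
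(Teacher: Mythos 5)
Your argument is correct, but it is not the route the paper itself takes: it is essentially Steigmann's original argument, which the paper only cites via \cite{SteigmannQJ03}. You build the convex function $P$ directly, using that $i_1=\tr\sqrt{F^TF}$ and $i_2=\tr\sqrt{(\Cof F)^T\Cof F}$ are the nuclear norms (sums of singular values) of $F$ and of $\Cof F$, hence convex in those slots, while $i_3$ enters affinely through $\det F$, and you then compose with $\psi$ via the convex-plus-nondecreasing composition rule. The paper instead proves the criterion in the Appendix (Proposition \ref{prop:steigmann}) by passing to singular values: it sets $\Phi(\lambda_1,\lambda_2,\lambda_3,\mu_1,\mu_2,\mu_3,\delta)=\psi(\lambda_1+\lambda_2+\lambda_3,\,\mu_1+\mu_2+\mu_3,\,\delta)$, transfers convexity from $\psi$ to $\Phi$ by Lemma \ref{lem-martin}, notes the symmetry and the monotonicity of $\Phi$ in the first six arguments, and then invokes Ball's sufficient criterion (Theorem \ref{critBalleng3}). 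Your route avoids Ball's singular-value theorem entirely, at the price of the matrix-analytic fact that $X\mapsto\tr\sqrt{X^TX}$ is convex and of a composition rule valid for an extended-real-valued outer function; the paper's route needs no matrix-norm facts but rests on Theorem \ref{critBalleng3}. On the one genuinely delicate point, the non-convex domain of $\psi$, the two treatments are comparable: your monotone-convex-envelope construction is only sketched (the claim that the envelope restricts to $\psi$ on the admissible set and stays nondecreasing in $i_1,i_2$ is asserted, not verified), while the paper's own proof simply assumes $\psi$ convex and nondecreasing on all of $\R_+^3$ and flags the domain ambiguity in the remark following Proposition \ref{prop:steigmann}; in the concrete application of Section 3 the needed extension is constructed explicitly.
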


 In planar elasticity, $U\in \mathbb{R}^{2\times 2}$ and the relevant isotropic principal invariants  are
\begin{align}\label{lii}
i_1&=\lambda_1+\lambda_2=\tr(U), \qquad
i_2=\lambda_1\lambda_2=\det U.
\end{align}
We have to remark that $i_2$ from \eqref{lii} does not coincide with $i_2$ from the three dimensional case. However, it can be understood from the context which expression for $i_2$ is used. For planar elasticity  we have the corresponding result \cite{SteigmannQJ03}:

\begin{proposition}{\rm (Steigman's polyconvexity criterion in 2D)} \label{sc2}
Suppose that
\begin{itemize}
\item[i)] $\psi(i_1,i_2)$ is a convex function of $(i_1,i_2)$ jointly\footnote{The domain in which $\psi(i_1,i_2)$ is defined is the domain ${D(i_1,i_2)}$ defined in \eqref{defdomains}, for which $\lambda_1,\lambda_2>0$, which is not a convex set. Again, a more appropriate notion of convexity for the function $\psi(i_1,i_2)$ on ${D(i_1,i_2)}$ is that of Busemann, Ewald and Shephard \cite{Busemann}, i.e. that $\psi$  is the restriction to ${D(i_1,i_2)}$ of a real-valued convex
function (in the usual sense) defined on the convex hull of ${D(i_1,i_2)}$ or,  equivalently, that the function $\psi$ can be extended to a convex function defined on the convex hull $Co {D(i_1,i_2)}=\mathbb{R}_+^2$ of ${D(i_1,i_2)}$.}, and
\item[ii)] $\psi(i_1,i_2)$ is a non-decreasing function of $i_1$.
\end{itemize}
Then  $W(F)=\psi(i_1,i_2)$ is polyconvex.
\end{proposition}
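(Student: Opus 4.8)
The plan is to exploit the fact that, in two dimensions, polyconvexity of $W$ just means that $W(F)=P(F,\det F)$ for some jointly convex $P\colon\R^{2\times2}\times\R\to\R\cup\{+\infty\}$ (the cofactor $\Cof F$ being a \emph{linear} function of the entries of $F$ for $2\times2$ matrices, so it carries no independent information). The two building blocks I would assemble are: first, the observation recorded just before the statement that $i_1=\tr U$ is a convex function of $F$ --- indeed $\tr U=\tr\sqrt{F^TF}=\lambda_1+\lambda_2$ is the sum of the singular values of $F$, i.e. the nuclear norm $\|F\|_*$, which is convex because it is a norm; and second, that on $\GL^+(2)$ we have $i_2=\det U=\sqrt{\det(F^TF)}=\det F$, so the second invariant is literally the minor $\det F$, hence an affine (in particular both convex and concave) function of the additional argument.

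With these in hand I would define $P(F,\delta):=\widetilde\psi(\|F\|_*,\delta)$, where $\widetilde\psi$ is the extension of $\psi$ to the convex hull $\R_+^2$ of its domain furnished by hypotheses (i)--(ii) read in the Busemann--Ewald--Shephard sense (see the footnote), chosen to remain convex and nondecreasing in its first argument. Then on $\GL^+(2)$ one has $P(F,\det F)=\widetilde\psi(\|F\|_*,\det F)=\psi(i_1,i_2)=W(F)$, so it only remains to check that $P$ is jointly convex in $(F,\delta)$.

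The core of the argument is a short composition estimate. Given $(F_0,\delta_0),(F_1,\delta_1)$ and $\theta\in[0,1]$, write $F_\theta,\delta_\theta$ for the convex combinations. Convexity of the norm gives $\|F_\theta\|_*\le(1-\theta)\|F_0\|_*+\theta\|F_1\|_*$; the \emph{monotonicity} hypothesis (ii) then lets me enlarge the first argument of $\widetilde\psi$ without decreasing its value, and joint convexity (i) of $\widetilde\psi$ closes the chain:
\begin{align*}
P(F_\theta,\delta_\theta)&=\widetilde\psi(\|F_\theta\|_*,\delta_\theta)\le\widetilde\psi\big((1-\theta)\|F_0\|_*+\theta\|F_1\|_*,\,\delta_\theta\big)\\
&\le(1-\theta)\,P(F_0,\delta_0)+\theta\,P(F_1,\delta_1).
\end{align*}
This is precisely where the two hypotheses enter, and it explains their asymmetry: monotonicity is demanded only in $i_1$ because $i_1(F)$ is genuinely convex (not affine) in $F$, whereas the affineness of $\delta\mapsto\delta$ means no monotonicity in $i_2$ is needed. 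Joint convexity of $P$ together with the representation $W(F)=P(F,\det F)$ then yields polyconvexity in the sense of Ball \cite{Ball77}.

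The step I expect to be the main obstacle is not this estimate but the domain bookkeeping. The function $\psi$ is a priori defined only on the non-convex set $D(i_1,i_2)=\{(\lambda_1+\lambda_2,\lambda_1\lambda_2):\lambda_1,\lambda_2>0\}$, so I must first make the extension $\widetilde\psi$ to $\mathrm{Co}\,D=\R_+^2$ precise while preserving \emph{both} convexity and monotonicity in the first argument, and then extend $P$ to all of $\R^{2\times2}\times\R$ (for instance setting $P=+\infty$ on $\{\delta\le0\}$) in a way that remains convex and lower semicontinuous, as Ball's admissibility framework requires. Checking that convexity is not destroyed across the boundary $\delta=0$ is the delicate point; once it is settled, the composition estimate above gives the conclusion.
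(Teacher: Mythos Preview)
Your argument is correct and is essentially Steigmann's original proof, which the paper alludes to just before Proposition~\ref{sc1} (``based on the interesting observation that the invariants $i_1=\tr U$, $i_2=\tr(\Cof U)$, $i_3=\det U$ are convex with respect to $F$, $\Cof F$ and $\det F$''). Your identification $i_1=\|F\|_*$ and $i_2=\det F$ on $\GL^+(2)$, followed by the monotone-convex composition estimate, is exactly that route; the domain-extension caveat you flag is the only genuine technicality, and it is the same one the paper faces when it later applies the criterion (they resolve it by explicit construction of $\widehat\psi$ on $[0,\infty)\times\R_+$).

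The paper itself does not prove Proposition~\ref{sc2} in the main text---it is cited from \cite{SteigmannQJ03}---but in the Appendix it gives an \emph{alternative} proof of the 3D analogue (Proposition~\ref{prop:steigmann}) that goes through Ball's singular-value criterion (Theorem~\ref{critBalleng3}) via Lemma~\ref{lem-martin}; the 2D version would analogously combine Lemma~\ref{lem-martin2} with Theorem~\ref{critBalleng2}. That route defines $\Phi(\lambda_1,\lambda_2,\delta):=\psi(\lambda_1+\lambda_2,\delta)$, checks that $\Phi$ is convex (iff $\psi$ is, by Lemma~\ref{lem-martin2}), symmetric, and nondecreasing in each $\lambda_i$ (from (ii)), and then invokes Ball. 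The trade-off: your approach is more self-contained and makes transparent \emph{why} only monotonicity in $i_1$ is needed (because $i_2=\det F$ is affine in the polyconvex argument $\delta$), while the paper's route avoids having to verify directly that $F\mapsto\tr U$ is convex, at the cost of importing Ball's theorem as a black box.
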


Templet and Steigmann's recent claim \cite{TempletSteigman13},  that these conditions are  also necessary for polyconvexity can be easily misinterpreted. Below we present some counterexamples to this point. In fact, formula (41) in \cite{TempletSteigman13} does not take care of the possibility that e.g. the dependence of $\Phi$ on $F$ does not have to be transmitted by $i_1$ alone. For the 3D-case, Steigmann showed that the above criterion may be applied to the energy
\begin{align}
W(F)&=a^+(i_1-3)+b^+(i_2-3)+h(i_3)=a^+\langle U-\id,\id\rangle+b^+\langle \Cof U-\id,\id\rangle+h(\det F)\\
\notag
&=a^+(\lambda_1+\lambda_2+\lambda_3)+b^+(\lambda_1\lambda_2+\lambda_2\lambda_3+\lambda_3\lambda_1)+h(\det F)-3(a^++b^+),
\end{align}
where $h$ is a convex function and $a^+, b^+>0$. The polyconvexity of this energy can also be deduced from a direct application of Ball's theorem \cite{Ball77}.

Steigmann's polyconvexity criterion in the planar case  \cite{SteigmannQJ03} is already contained in  the paper by Rosakis and Simpson \cite{Rosakis92}  for the choice of the entry parameter $\alpha=-1$. Indeed, Rosakis and Simpson gave sufficient conditions for polyconvexity of $W(\cdot)$ having the form
$
W(F)=\widetilde{W}(\tr(F^TF),\det F)=\widetilde{W}(\|F\|^2,\det F).
$
In the notation of Rosakis and Simpson
$
I:=\tr(F^T F)=\|F\|^2,  J:=\det F$, $
\mathcal{A}_\alpha=\{(\xi,\eta): \xi\geq 0,  \eta\geq 0,\,  \xi^2\geq 2\,(1-\alpha)\eta\,\}.
$
Let us give the correlations with our notations. Rosakis and Simpson defined the function $\xi:{\rm GL}(2)\rightarrow\mathbb{R}$ by
$
\xi_\alpha(F)=\sqrt{\|F\|^2-2\,\alpha\,\det F},\quad F\in {\rm GL}(2),
$
and proved (see Lemma 3.1 in \cite{Rosakis92}) that the function $\xi_\alpha$ is convex\footnote{First, in \cite{Rosakis92}, it is proved that for $M\in{\rm PSym}(n)$, the function $\varphi:\R^n\to \R$, defined by $\varphi(x)=\sqrt{\langle x,M.\,x\rangle}, \ x\in\R^n$ is  convex. For $F\in {\rm GL}(2)$ we have  $\|F\|^2\geq 2\, |\det F|$ because $\|F\|^2- 2\, |\det F|=(F_{11}+F_{22})^2+(F_{12}-F_{21})^2$. Hence, the expression $ \|F\|^2-2\alpha\det F$ under the radix is, for $\alpha\in[-1,1]$, a quadratic, positive semi-definite function of $F\in {\rm GL}(2)$. By means of an isomorphism $F\mapsto {\rm vec}(F):=(F_{11},F_{12}, F_{21}, F_{22})\in \R^4$, the function $\xi_\alpha$ can be expressed as a function of the
form $\varphi_\alpha:\R^4\to \R$, defined by $\varphi_\alpha({\rm vec}(F))=\sqrt{\langle {\rm vec}(F),M_\alpha.\,{\rm vec}(F)\rangle}, \ F\in {\rm GL}(2)$, where $M_\alpha=B_\alpha^T\, B_\alpha\in{\rm PSym}(4)$ is a positive definite matrix. Thus, $\varphi_\alpha({\rm vec}(F))=\|B_\alpha {\rm vec}(F)\|, \ F\in {\rm GL}(2)$ and therefore $F\mapsto \varphi_\alpha ({\rm vec}(F))$ is convex.}
 for  $\alpha\in[-1,1]$. Moreover they  pointed out that
$
F\in{\rm GL}^+(2)\Leftrightarrow(\xi_\alpha,J)\in \mathcal{A}_\alpha.
$
Let us remark that $J=i_2$ in general and for $\alpha=-1$ the domain $\mathcal{A}_\alpha$ is the
domain $\overline{D(i_1,i_2)}$, where ${D(i_1,i_2)}$ is the domain considered in our further analysis, see \eqref{defdomains}, and $\xi_{-1}=i_1$. The convex hull of $\mathcal{A}_\alpha$ is $\mathbb{R}_+^2$ for $-1\leq \alpha<1$, while for $\alpha= 1$,
$\mathcal{A}_1=[0,\infty)\times \mathbb{R}_+$ is convex and is exactly the domain considered in our extension (see \eqref{defpsifin}).

\begin{proposition}{\rm (Rosakis and Simpson's early polyconvexity criterion in 2D \cite{Rosakis92}) }
Let $W :{\rm GL}^+(2)\mapsto \R$ be isotropic. For each $\alpha\in[-1,1]$ define
$
\Phi_\alpha(\xi,J)=\widetilde{W}(\xi^2+2\,\alpha \,J,J),\ (\xi,J)\in \mathcal{A}_\alpha,
$
and suppose that for some $\alpha\in[-1,1]$,
\begin{itemize}
\item[i)]  $\Phi_\alpha$ is convex by extension\footnote{The function $\Phi_\alpha$ is well defined in $\mathcal{A}_\alpha$ which is not equal to $\mathcal{A}_1=[0,\infty)\times \mathbb{R}_+^2$ in general.} to $\mathcal{A}_1=[0,\infty)\times \mathbb{R}_+$,
\item[ii)] $\Phi_\alpha(\cdot,J)$ is nondecreasing on $[0,\infty)$ for each $J>0$.
\end{itemize}
Then $W(\cdot)$ is polyconvex.
\end{proposition}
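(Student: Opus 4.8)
The plan is to exhibit, for the fixed admissible $\alpha\in[-1,1]$, a convex function $P$ on $\R^{2\times 2}\times\R$ (allowed to take the value $+\infty$) with $W(F)=P(F,\det F)$ for every $F\in\GL^+(2)$; since for $2\times2$ matrices $\Cof F$ is a fixed invertible linear function of $F$, this is precisely Ball's polyconvexity. The starting point is the factorization already present in the statement: because $\xi_\alpha(F)^2=\|F\|^2-2\,\alpha\det F$, one has $W(F)=\widetilde W(\|F\|^2,\det F)=\widetilde W(\xi_\alpha(F)^2+2\,\alpha\det F,\det F)=\Phi_\alpha(\xi_\alpha(F),\det F)$ for all $F\in\GL^+(2)$. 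The idea is to read this right-hand side as a composition of a convex vector-valued map in $(F,t)$ with the (extended) convex function $\Phi_\alpha$.

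Three ingredients enter. First, $F\mapsto\xi_\alpha(F)=\sqrt{\|F\|^2-2\,\alpha\det F}$ is convex and nonnegative on all of $\R^{2\times2}$: for $\alpha\in[-1,1]$ the radicand is the positive semidefinite quadratic form $\|B_\alpha\,{\rm vec}(F)\|^2$, so $\xi_\alpha$ is a seminorm of $F$; this is the Rosakis--Simpson observation recalled in the footnote above and need not be reproved. Second, $\det F$ enters only through the second, free slot, where as a function of the pair $(F,t)$ it is simply $t$, hence affine. Third, by hypothesis (i) there is a convex extension $\widehat\Phi_\alpha$ of $\Phi_\alpha$ to the convex set $\mathcal A_1=[0,\infty)\times\R_+$, and by hypothesis (ii) this extension may be taken so that $\widehat\Phi_\alpha(\cdot,J)$ is nondecreasing on $[0,\infty)$ for each $J>0$. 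Now set $P(F,t):=\widehat\Phi_\alpha(\xi_\alpha(F),t)$ for $t>0$ and $P(F,t):=+\infty$ for $t\le0$. Convexity of $P$ on $\R^{2\times2}\times\R$ follows by the standard composition estimate: given pairs $(F_1,t_1),(F_2,t_2)$ and $\theta\in[0,1]$, convexity of $\xi_\alpha$ together with monotonicity of $\widehat\Phi_\alpha$ in its first argument gives $\widehat\Phi_\alpha\bigl(\xi_\alpha(\theta F_1+(1-\theta)F_2),\,\theta t_1+(1-\theta)t_2\bigr)\le\widehat\Phi_\alpha\bigl(\theta\,\xi_\alpha(F_1)+(1-\theta)\,\xi_\alpha(F_2),\,\theta t_1+(1-\theta)t_2\bigr)$, and joint convexity of $\widehat\Phi_\alpha$ then bounds the latter by $\theta\,P(F_1,t_1)+(1-\theta)\,P(F_2,t_2)$; the affine second slot needs no monotonicity, and the $+\infty$ convention is compatible with both inequalities.

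It remains to check that $P$ reproduces $W$ along the graph $t=\det F$. For $F\in\GL^+(2)$ we have $\det F>0$ and, by Rosakis and Simpson's equivalence $F\in\GL^+(2)\Leftrightarrow(\xi_\alpha(F),\det F)\in\mathcal A_\alpha$, the pair $(\xi_\alpha(F),\det F)$ lies in $\mathcal A_\alpha\subset\mathcal A_1$; there the extension $\widehat\Phi_\alpha$ coincides with $\Phi_\alpha$, so $P(F,\det F)=\Phi_\alpha(\xi_\alpha(F),\det F)=W(F)$ by the factorization above. Hence $W(F)=P(F,\det F)$ with $P$ convex, i.e. $W$ is polyconvex. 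The one genuinely delicate point in this plan is the third ingredient together with the composition step: one must make sure that the convex extension granted by (i) can be chosen nondecreasing in the first variable on the whole of $[0,\infty)$, so that hypothesis (ii) is used in exactly the form required, and that composing a function which is merely convex --- not affine --- in the $\xi$-slot with the convex map $\xi_\alpha$ is legitimate precisely because of this monotonicity; dropping (ii) would break the argument. The structural heart of the criterion, the convexity of $\xi_\alpha$ itself, is taken ready-made from the cited footnote.
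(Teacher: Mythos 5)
The paper does not prove this proposition at all: it is quoted verbatim from Rosakis and Simpson \cite{Rosakis92}, and the surrounding text only recalls the two ingredients you also use (convexity of $\xi_\alpha$ via the footnoted seminorm argument, and the equivalence $F\in{\rm GL}^+(2)\Leftrightarrow(\xi_\alpha(F),\det F)\in\mathcal{A}_\alpha$). Your reconstruction is correct and is essentially the original Rosakis--Simpson argument: write $W(F)=\Phi_\alpha(\xi_\alpha(F),\det F)$, extend $\Phi_\alpha$ convexly to $\mathcal{A}_1$, and compose the convex nonnegative map $F\mapsto\xi_\alpha(F)$ with the jointly convex extension, using monotonicity in the first slot (the second slot is affine in $(F,t)$), then set $P(F,t)=+\infty$ for $t\le 0$; in $2$D this $P(F,\det F)$ representation is exactly polyconvexity since $\Cof F$ is a fixed linear rearrangement of $F$. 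The one point you rightly flag is the reading of hypothesis (ii): since it asserts monotonicity ``on $[0,\infty)$'' --- where the original $\Phi_\alpha$ is not even defined for $\alpha<1$ --- it must be understood as a property of the extension on all of $\mathcal{A}_1$, which is the intended meaning; mere monotonicity of $\Phi_\alpha$ on the slices of $\mathcal{A}_\alpha$ would not automatically furnish a convex \emph{and} monotone extension, and your composition step genuinely needs monotonicity at convex combinations that can leave $\mathcal{A}_\alpha$. With that reading your proof is complete.
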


Rosakis and Simpson \cite{Rosakis92} already stated that the conditions of the above proposition are not necessary for polyconvexity of isotropic functions. They illustrated this with an example due to Dacorogna et al. \cite{DacorognaPRSE90}. For a complete view we give this example in the following. The considered function is  ${W}:{\rm GL}^+(2)\rightarrow\mathbb{R}$ given by
$
W(F)=\|F\|^4-2(\det F)^2=I^2-2\,J^2,  (I,J)\in \mathcal{D},
$
where
$
\mathcal{D}=\{(I,J):\, I\geq 2\,|J|,  J\in\mathbb{R}\}.
$
Then ${W}:\R^{2\times 2}\rightarrow\mathbb{R}$, ${W}(F)=I^2-2J^2$ is convex \cite{DacorognaPRSE90} and hence its restriction to ${\rm GL}^+(2)$ is polyconvex. In this case we have
$
\Phi_\alpha(\xi,J)=(\xi^2+2\,\alpha\, J)^2-2\,J^2,\ \forall\, (\xi,J)\in \mathcal{A}_\alpha.
$
The Hessian matrix of $\Phi_\alpha$ fails to be positive semi-definite on $\mathcal{A}_\alpha$ for all $\alpha\in[-1,1].$ Hence, the conditions of Rosakis and Simpson are not necessary. In our notation, we have
$
{W}(F)=\langle C,\id\rangle^2-2(\det F)^2=\langle F^T F,\id\rangle^2-2(\det F)^2=\| F\|^4-2(\det F)^2,
$
which is in fact convex in $F\in\mathbb{R}^{2\times 2}$, while in terms of principal invariants the function
$
{W}(F)=\psi(i_1,i_2)=(i_1^2-2\,i_2)^2-2\,i_2^2=i_1^4-4\,i_1^2\,i_2+2\,i_2^2,
$
does not have  a positive semi-definite Hessian on $\mathcal{A}_{-1}={D(i_1,i_2)}$.

Another counterexample for this phenomenon, but in the three-dimensional case, is  given by the mapping $F\mapsto \norm{\Cof F}^2=\norm{\Cof U}^2$, which is (obviously) {polyconvex} (since it is convex in $\Cof F$). This function is rotationally invariant.
The eigenvalues of $\Cof U$ are $\lam_2\lam_3,  \lam_1\lam_3, \lam_1\lam_2 $, hence
$
 \norm{\Cof U}^2=(\lam_1\lam_2+\lam_1\lam_3+\lam_2\lam_3)^2-2(\lam_1+\lam_2+\lam_3)(\lam_1\lam_2\lam_3)=i_2^2-2\,i_1\, i_3.\notag
$
The corresponding unique representation function $\phii(i_1,i_2,i_3)=i_2^2-2i_1i_3$ is a nonconvex function in $(i_1,i_2,i_3)$ and not even convex in a neighbourhood of $\id$, i.e. of $(i_1,i_2,i_3)=(3,3,1)$.
Although $\norm{\Cof F}^2$ is a function of $\Cof F$ only, its representation as a function of the principal invariants of $U$ also contains $i_1$ and $i_3$.
Furthermore, the resulting function is neither convex in $(i_1,i_2,i_3)$ nor increasing in $i_1$.

We give in the following some immediate consequence of the Theorems \ref{sc1} and \ref{sc2}.
\begin{remark}
 \label{thm:expsteigmann}
 If some function $\psi$ satisfies the hypotheses of Theorems \ref{sc1} or \ref{sc2}, then $e^\psi$ satisfies them as well.
\end{remark}
\begin{proof}
The proof follows from the monotonicity and convexity of the  exponential function.
\end{proof}
\begin{remark}
 \label{thm:expsteigmannb}
 If $\psi$ fails the hypothesis ii) from  Theorems \ref{sc1} or \ref{sc2}, then so does $e^ \psi$.
\end{remark}
\begin{proof} The function
 $\log$ is monotone. Hence, if $e^\psi$ were monotone in $i_1$ (or $i_2$), $\log(e^\psi)=\psi$ would be monotone in $i_1$ (or $i_2$).
\end{proof}
Note that, however, the convexity condition (i)  from  Theorems \ref{sc1} or \ref{sc2} can be improved by the exponential function. Polyconvexity is compatible with exponentiating:

\begin{remark}
 \label{thm:exppoly}
 If a function $W$ is polyconvex, then so is $e^ W$.
\end{remark}
\begin{proof} According with the definition, $W$ is polyconvex if and only if
 $W(F)=P(F,\Cof F,\det F)$, where $P$ is convex. But if  $P$ is convex, then $e^P$ is also convex (the exponential function is convex and monotone), hence $e^W$ is polyconvex \cite{Hartmann_Neff02,Schroeder_Neff01}.
\end{proof}

\subsection{Plane elastostatics}

In planar elasticity the relevant isotropic principal invariants  are defined by \eqref{lii}.
Note again that the meaning of  the isotropic invariants of $U$, namely $i_1,i_2$, depends on the dimension. Every isotropic and frame-invariant function of $F\in{\rm GL}^+(2)$ is  expressible in the form
\begin{align}\label{WgpsiPhiP}
W(F) &=\widehat{W}(U)=g(\lambda_1, \lambda_2)=\psi(i_1,i_2)=\Phi(\lambda_1, \lambda_2, \lambda_1 \lambda_2,\lambda_1 \lambda_2)=P(F, \det F),
\end{align}
where the functions $\widehat{W}, g,\psi$ are uniquely determined by $W$, while $\Phi$ and $P$ are not unique.
The same observation as in Lemma \ref{lem-martin} leads to
\begin{lemma}\label{lem-martin2}
Let $\Psi:\R_+^2\to\R$ and $\Phi:\R_+^3\to\R$ with
\begin{align}\label{lem-marti2-cond}
	\Phi(\lambda_1,\lambda_2,\delta) = \Psi(\lambda_1+\lambda_2,\:\delta)
\end{align}
for all $\lambda_1,\lambda_2,\delta\in \R_+$. Then $\Phi$ is convex if and only if $\Psi$ is convex.
\end{lemma}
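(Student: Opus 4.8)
The plan is to realise $\Phi$ as $\Psi$ composed with a surjective linear map, and $\Psi$ as $\Phi$ composed with a linear right inverse (section) of that map, and then to invoke the elementary fact that the composition of a convex function with an affine map, restricted to a convex set on which it is defined, is again convex. This is exactly the two-dimensional counterpart of the argument already carried out in Lemma \ref{lem-martin}.

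First I would introduce $A\colon\R^3\to\R^2$, $A(\lambda_1,\lambda_2,\delta)=(\lambda_1+\lambda_2,\delta)$, which is linear and maps the convex set $\R_+^3$ onto the convex set $\R_+^2$. By \eqref{lem-marti2-cond} we have $\Phi=\Psi\circ A$ on $\R_+^3$. Hence, if $\Psi$ is convex, then for $x,y\in\R_+^3$ and $t\in[0,1]$ one has $tx+(1-t)y\in\R_+^3$ and $A(tx+(1-t)y)=tA(x)+(1-t)A(y)$, so $\Phi(tx+(1-t)y)=\Psi(tA(x)+(1-t)A(y))\le t\Psi(A(x))+(1-t)\Psi(A(y))=t\Phi(x)+(1-t)\Phi(y)$, i.e. $\Phi$ is convex. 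For the reverse implication I would choose the linear right inverse $B\colon\R^2\to\R^3$, $B(s,\delta)=(\tfrac{s}{2},\tfrac{s}{2},\delta)$, which maps $\R_+^2$ into $\R_+^3$. Then \eqref{lem-marti2-cond} gives $\Psi(s,\delta)=\Phi(\tfrac{s}{2},\tfrac{s}{2},\delta)=(\Phi\circ B)(s,\delta)$ for all $(s,\delta)\in\R_+^2$, and the same composition-with-an-affine-map computation shows that $\Psi$ is convex whenever $\Phi$ is.

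I do not expect any genuine obstacle: the statement is essentially a reformulation of invariance of convexity under linear reparametrisation. The only points deserving an explicit word are that the domains $\R_+^2$ and $\R_+^3$ are convex and that $A$ and $B$ respect positivity ($\lambda_1,\lambda_2>0\Rightarrow\lambda_1+\lambda_2>0$ and $s>0\Rightarrow\tfrac{s}{2}>0$), which is what makes the two compositions well defined on the stated domains and allows the convexity inequalities to be applied without ever leaving those domains.
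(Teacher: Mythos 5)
Your proof is correct and is essentially the argument the paper uses (via Lemma \ref{lem-martin}): the paper's ``if'' direction is exactly your composition $\Phi=\Psi\circ A$ exploiting linearity of the sum, and its contrapositive construction choosing equal components $\lambda_1=\lambda_2=\frac{a}{2}$ is precisely your section $B$, so the two proofs differ only in that yours packages the computation as invariance of convexity under affine precomposition. No gaps.
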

\begin{remark}
Let us consider an isotropic energy function $W(\lambda_1,\lambda_2)$ with
$
	W(\lambda_1,\lambda_2) = \Phi(\lambda_1,\,\lambda_2,\lambda_1\lambda_2)
= \Psi(\lambda_1+\lambda_2,\lambda_1\lambda_2)
$
for all $\lambda_1,\lambda_2\in\R^+$ with functions $\Psi:\R_+^2\to\R$ and $\Phi:\R_+^3\to\R$. Then the functions $\Psi$ and $\Phi$ do not necessarily fulfil the conditions of the previous lemma, i.e. we do not have a strong equality like \eqref{lem-marti2-cond}, cf. Remark \ref{remA2}.
\end{remark}

We also have the 2D version of the Ball's sufficient criterion for polyconvexity of isotropic functions:

\begin{theorem}{\rm(Ball \cite[page 367]{Ball77} 2D sufficient conditions for polyconvexity of isotropic functions)}\label{critBalleng2}\\
Let
$
W(F)=\Phi(\lambda_1,\lambda_2,\lambda_1\lambda_2),
$
where $\lambda_1,\lambda_2$ are the singular values of $F\in {\rm GL}^+(2)$, and
\begin{itemize}
\item[i)] $\Phi:\mathbb{R}_+^3\rightarrow\mathbb{R}$ is convex,
\item[ii)] $\Phi(x_1,x_2,\delta)=\Phi(x_2,x_1,\delta)$ for all $x_1,x_2,\delta\in\mathbb{R}_+$,
\item[iii)] $\Phi(x_1,x_2,\delta)$ is nondecreasing in $x_1$ and $x_2$ individually.
\end{itemize}
Then $W$ is polyconvex.
\end{theorem}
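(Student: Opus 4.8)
The plan is to exhibit an explicit convex function $P$ of $(F,\det F)$ with $W(F)=P(F,\det F)$; since in two dimensions $\Cof F$ depends linearly on $F$, this already yields polyconvexity in the sense of the definition used here. Everything rests on two elementary facts about $2\times 2$ matrices: writing $\lambda_1(F),\lambda_2(F)\ge 0$ for the singular values of $F$, one has $\max\{\lambda_1(F),\lambda_2(F)\}=\|F\|_{\mathrm{op}}$ (the operator norm) and $\lambda_1(F)+\lambda_2(F)=\|F\|_{\mathrm{nuc}}$ (the nuclear/trace norm), and both are matrix norms, hence convex functions of $F$ on all of $\R^{2\times 2}$. (Concretely, $\|F\|_{\mathrm{op}}=\sup_{\|v\|=1}\|Fv\|$ is a supremum of convex functions of $F$, and $\|F\|_{\mathrm{nuc}}=\sup\{\langle F,G\rangle:\|G\|_{\mathrm{op}}\le 1\}$ a supremum of linear ones.)

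The first and main step would be to show that, under hypotheses (i)--(iii),
\[
\Phi(x_1,x_2,\delta)=\sup_{\iota\in I}\Bigl[\,a_\iota\,\max\{x_1,x_2\}+b_\iota\,(x_1+x_2)+\zeta_\iota\,\delta+c_\iota\,\Bigr],\qquad a_\iota,\,b_\iota\ge 0,
\]
for all $(x_1,x_2,\delta)\in\R_+^3$. To obtain this I would extend $\Phi$ to $\R^2\times\R_+$ by $\widetilde\Phi(x_1,x_2,\delta):=\Phi(|x_1|,|x_2|,\delta)$, which by (i) and (iii) is jointly convex (a convex function nondecreasing in its first two slots, composed with the convex maps $x_i\mapsto|x_i|$) and is moreover even in $x_1$, even in $x_2$ and symmetric under $x_1\leftrightarrow x_2$. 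Representing $\widetilde\Phi$ as the supremum of its affine minorants and replacing each minorant $\alpha x_1+\beta x_2+\zeta\delta+c$ by the pointwise maximum of its sign variants ($x_i\mapsto-x_i$) and its transpose ($x_1\leftrightarrow x_2$) --- all of which are still minorants of the invariant function $\widetilde\Phi$ --- produces, via the elementary identity
\[
\max\{\,p\,x_1+q\,x_2,\ q\,x_1+p\,x_2\,\}=|p-q|\,\max\{x_1,x_2\}+\min\{p,q\}\,(x_1+x_2)\qquad(p,q\ge 0),
\]
exactly the building blocks appearing above, with $a_\iota=\big||\alpha_\iota|-|\beta_\iota|\big|\ge 0$ and $b_\iota=\min\{|\alpha_\iota|,|\beta_\iota|\}\ge 0$. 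Each such maximum is still $\le\widetilde\Phi$, yet at any point it already exceeds the original minorant, so the supremum over all minorants recovers $\widetilde\Phi$; restricting to $x_1,x_2>0$ gives the formula for $\Phi$.

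With that in hand the conclusion would be immediate: evaluating the formula at $x_1=\lambda_1(F)$, $x_2=\lambda_2(F)$, $\delta=\det F$ --- legitimate since $\lambda_1\lambda_2=\det F$ on $\GL^+(2)$ --- and using the two norm facts gives $W(F)=P(F,\det F)$ with
\[
P(F,t):=\sup_{\iota\in I}\Bigl[\,a_\iota\,\|F\|_{\mathrm{op}}+b_\iota\,\|F\|_{\mathrm{nuc}}+\zeta_\iota\,t+c_\iota\,\Bigr].
\]
Each bracket is jointly convex in $(F,t)$ because $a_\iota,b_\iota\ge 0$ and $\|\cdot\|_{\mathrm{op}},\|\cdot\|_{\mathrm{nuc}}$ are convex in $F$ while $t\mapsto\zeta_\iota t+c_\iota$ is affine, and a supremum of convex functions is convex, so $P$ is convex (extended by $+\infty$ for $t\le 0$, which preserves convexity). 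Hence $W$ is polyconvex; this is, in essence, Ball's original argument \cite{Ball77}.

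The step I expect to be the real obstacle is the representation formula, and inside it the role of hypothesis (iii): monotonicity is precisely what makes the folded extension $\widetilde\Phi$ convex, equivalently what forces the coefficients of $\|F\|_{\mathrm{op}}$ and $\|F\|_{\mathrm{nuc}}$ to be nonnegative --- without it one would be driven to terms like $-\|F\|_{\mathrm{op}}$, which are concave, not convex. A minor point of care is that $\Phi$ is a priori defined only on the open orthant $\R_+^3$, so the manipulation with affine minorants has to be run after the even extension; alternatively one can appeal to the von Neumann--Davis--Lewis description of convex, orthogonally invariant functions of the singular values, but the representation above is in any case what makes the joint convexity in $(F,\det F)$ transparent.
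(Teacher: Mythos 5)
The paper does not actually prove Theorem \ref{critBalleng2}: it is quoted as Ball's result \cite[page 367]{Ball77} and used as a black box (e.g.\ in Proposition \ref{prop:steigmann}), so there is no in-paper argument to compare against line by line. Your proposal is a correct, self-contained proof and is in spirit the classical one: the heart of Ball's argument is that a symmetric, convex function which is nondecreasing in the singular values is convex as a function of the matrix, and your representation formula is exactly a hands-on 2D derivation of that fact, using that $\max\{\lambda_1,\lambda_2\}=\|F\|_{\mathrm{op}}$ and $\lambda_1+\lambda_2=\|F\|_{\mathrm{nuc}}$ are convex in $F$, that the folded extension $\widetilde\Phi(x_1,x_2,\delta)=\Phi(|x_1|,|x_2|,\delta)$ is convex precisely because of hypotheses (i) and (iii), and that symmetrizing each affine minorant over the sign/permutation group yields, via your $\max\{px_1+qx_2,\,qx_1+px_2\}$ identity, building blocks with nonnegative coefficients in front of the two convex norm expressions; hypothesis (ii) is what makes the transposed minorant admissible. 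What this buys over simply invoking the von Neumann--Davis--Lewis machinery is transparency: the convex function $P(F,t)$ of $(F,\det F)$ is exhibited explicitly, and the role of monotonicity (iii) is visible as the obstruction to concave terms like $-\|F\|_{\mathrm{op}}$. Two technical points should be fixed in a final write-up, both of which you already flag and both harmless: $\Phi$ lives on the open orthant, so before folding you must extend it to the faces $x_i=0$ (the monotone limits exist and are finite, since convexity plus monotonicity in $x_i$ gives a supporting-line lower bound near $x_i=0$, and convexity passes to the limit), because the orbit of the positive orthant under the sign group is not convex without these boundary points; and the resulting $P(F,t)$ may equal $+\infty$ for $t\le 0$, so one should either allow extended-real-valued $P$ (as the paper itself does in its appendix, with values in $\R\cup\{+\infty\}$) or restrict the convexity statement to $\R^{2\times 2}\times\R_+$, which suffices for polyconvexity on ${\rm GL}^+(2)$.
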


Given the eigenvalues $\lambda_1, \lambda_2\in \mathbb{R}_+$, we can always compute the invariants $i_1, i_2\in\mathbb{R}$. But for given $i_1, i_2\in\mathbb{R}_+$ we can not always say that the equation
$
\lambda^2-i_2\,\lambda+i_1=0
$
has two different positive solutions $\lambda_1, \lambda_2$, which is the case if and only if $i_2^2-4\,i_1>0$. Our intention is to make the map $(\lambda_1,\lambda_2)\mapsto (i_1,i_2)$ a one-to-one function. For this reason, we define the function
\begin{align}\label{deffuncti}
i=(i_1,i_2)^T:{D(\lambda_1,\lambda_2)}\rightarrow {D(i_1,i_2)},
\end{align}
 which maps $(\lambda_1,\lambda_2)$ into $(i_1,i_2)$, where (see Figures \ref{figure1}, \ref{figure2})
\begin{align}
\label{defdomains}
{D(\lambda_1,\lambda_2)}&=\{(\lambda_1,\lambda_2)\in\mathbb{R}^2_+:\; \lambda_1> \lambda_2\},\quad
{D(i_1,i_2)}=\{(i_1,i_2)\in\mathbb{R}^2_+:\; i_1^2-4\,i_2> 0\}.
\end{align}
\begin{figure}[h!]
\hspace*{1cm}
\begin{minipage}[h]{0.4\linewidth}
\centering
\includegraphics[scale=0.6]{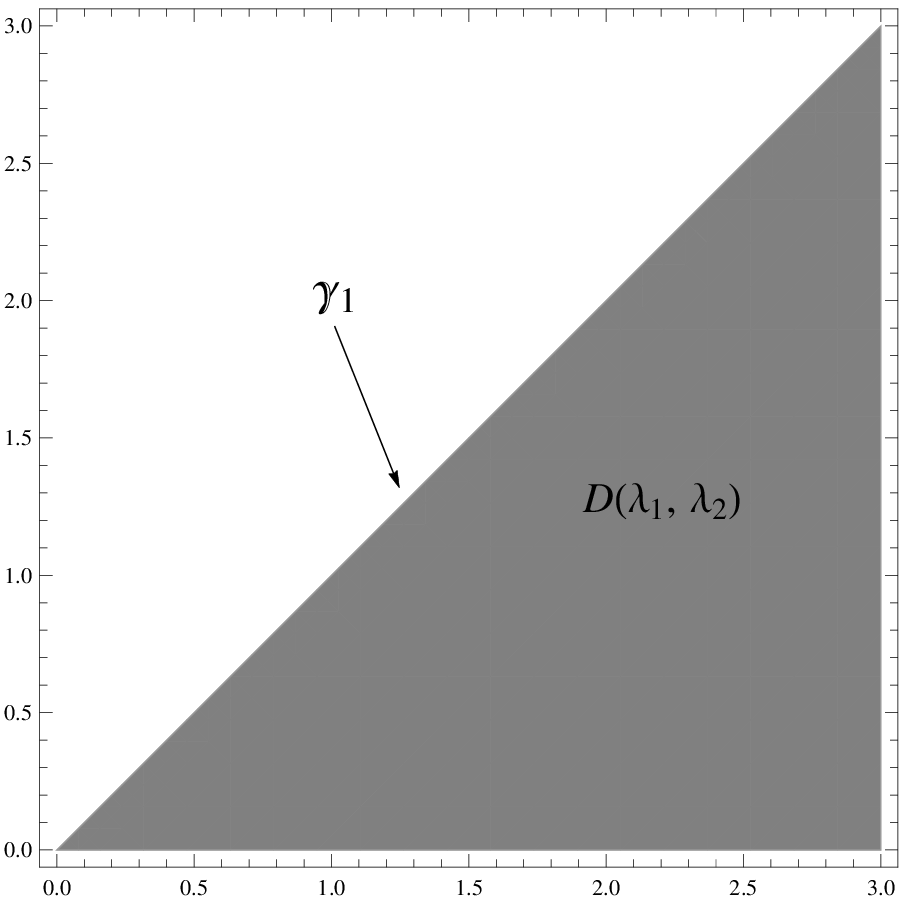}
\centering
\caption{\footnotesize{The domain ${D(\lambda_1,\lambda_2)}$ of the singular values $\lambda_1,\lambda_2$.}}
\label{figure1}
\end{minipage}
\hspace{0.5cm}
\begin{minipage}[h]{0.4\linewidth}
\centering
\includegraphics[scale=0.6]{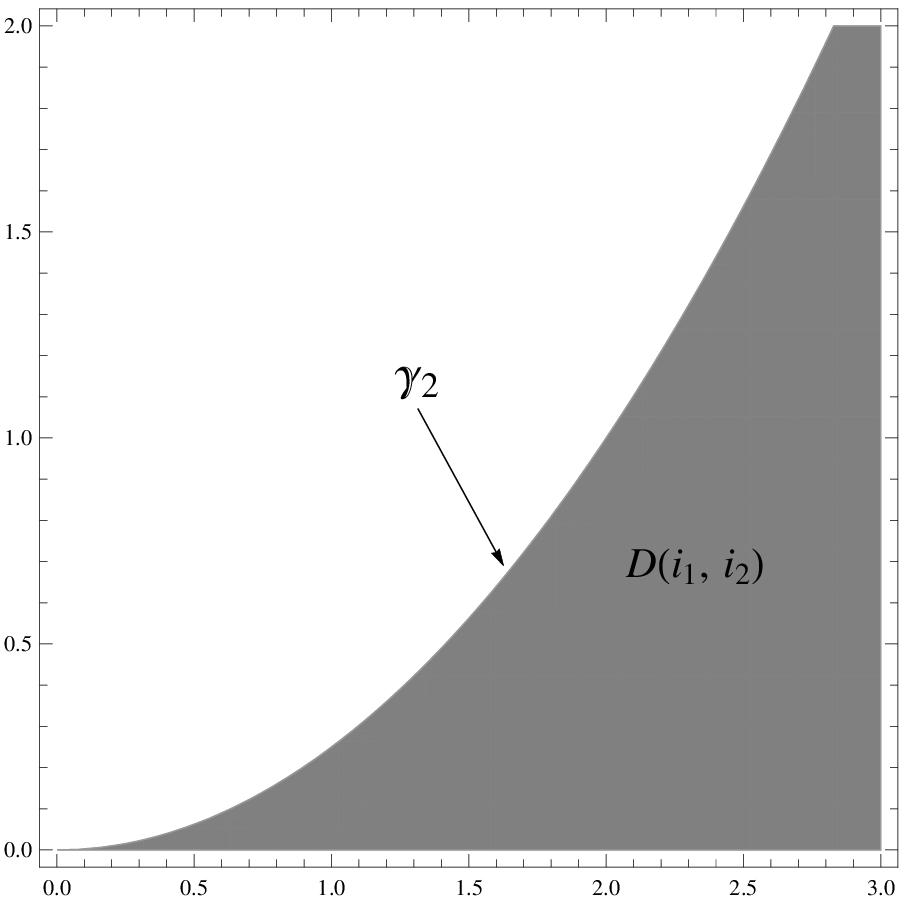}
\centering
\caption{\footnotesize{The domain ${D(i_1,i_2)}$ of the admissible values of the isotropic principal invariants $i_1,i_2$.}}
\label{figure2}
\end{minipage}

\end{figure}%

We can also define the function $i(\cdot)$ on the curve $\gamma_1:\ \lambda_1=t, \ \lambda_2=t,\  t\in(0,\infty)$. In this way $i(\cdot)$ maps the curve $\gamma_1$ into the curve $\gamma_2: i_1=t,\ i_2=\frac{t^2}{4},\  t\in(0,\infty)$. The function $i(\cdot)$ is also a one-to-one function on this curve but it is a $C^2$--diffeomorphism on the open domain $D(\lambda_1,\lambda_2)$, away from the curve $\gamma_1$. Hence, for now we consider the restriction of the function $g$ from \eqref{WgpsiPhiP} to the domain ${D(\lambda_1,\lambda_2)}$, denoted in the following also by $g$. According to \eqref{lii} and \eqref{wf}, the energy $W(F)$ can also be written in terms of $(i_1,i_2)$, i.e. there is $\psi:{D(i_1,i_2)}\rightarrow\mathbb{R}$ such that $\psi=g\circ i^{-1}$.  We suppose that the function $g$ is a $C^2$-function on its domain of definition. Moreover, the function $i=(i_1,i_2)$ is a $C^2$-diffeomorphism. Using the above notations, the chain rule and according with \eqref{defgrad} and
\eqref{defgrad1} we write
\begin{align}\label{grad2}
\nabla_\lambda g=(\nabla_\lambda i)^T\nabla_i \psi.
\end{align}
We know that for $(\lambda_1,\lambda_2)\in {D(\lambda_1,\lambda_2)}$ the matrix
$
\notag\nabla_\lambda i\dd=
\left(
\begin{array}{cc}
 1 & 1 \vspace{1mm}\\
 \lambda_2 & \lambda_1 \\
\end{array}
\right)
$
is invertible and
\begin{align}
(\nabla_\lambda i)^{-T}=\frac{1}{\lambda_1-\lambda_2}\left(
\begin{array}{cc}
 \lambda_1 & -\lambda_2 \vspace{1mm}\\
 -1 & 1 \\
\end{array}
\right).
\end{align}
In this case, we have
\begin{align}\label{hes2}
\frac{\partial^2 f}{\partial \lambda_i\partial \lambda_j}&=\frac{\partial }{\partial \lambda_j}\left(\frac{\partial \psi}{\partial i_1}\frac{\partial i_1}{\partial \lambda_i}
+\frac{\partial \psi}{\partial i_2}\frac{\partial i_2}{\partial \lambda_i}\right)\\\notag
&=\frac{\partial^2 \psi}{\partial i_1^2}\frac{\partial i_1}{\partial \lambda_j}\frac{\partial i_1}{\partial \lambda_i}+
\frac{\partial^2 \psi}{\partial i_1\partial i_2}\frac{\partial i_2}{\partial \lambda_j}\frac{\partial i_1}{\partial \lambda_i}+
\frac{\partial^2 \psi}{\partial i_1\partial i_2}\frac{\partial i_2}{\partial \lambda_i}\frac{\partial i_1}{\partial \lambda_j}+
\frac{\partial^2 \psi}{\partial i_2^2}\frac{\partial i_2}{\partial \lambda_j}\frac{\partial i_2}{\partial \lambda_i}+
\frac{\partial \psi}{\partial i_1}\frac{\partial^2 i_1}{\partial \lambda_j\partial \lambda_i}
+
\frac{\partial \psi}{\partial i_2}\frac{\partial^2 i_2}{\partial \lambda_j\partial \lambda_i}.
\end{align}
 In 2D, the Hessian matrix of $g$ with respect to the variables $(\lambda_1,\lambda_2)$ and the Hessian matrix of $\psi$ with respect to the variables $(i_1,i_2)$ are
\begin{align}
D^2_{\lambda}g=\left(
\begin{array}{ccc}
 \frac{\partial ^2 g}{\partial \lambda_1^2} & \frac{\partial ^2 g}{\partial \lambda_1\partial\lambda_2} \\
 \frac{\partial ^2 g}{\partial \lambda_1\partial\lambda_2} & \frac{\partial ^2 g}{\partial \lambda_2^2}
\end{array}
\right),\ \ \ \
D^2_{i}\psi=\left(
\begin{array}{ccc}
 \frac{\partial ^2 \psi}{\partial i_1^2} & \frac{\partial ^2 \psi}{\partial i_1\partial i_2} \\
 \frac{\partial ^2 \psi}{\partial i_1\partial i_2} & \frac{\partial ^2 \psi}{\partial i_2^2}
\end{array}
\right),
\end{align}
while
$
\mathbb{D}^2_\lambda i=(D^2_\lambda i_1|D^2_\lambda i_2).
$ We recall that for a   third  order tensor $\mathbb{G}$, we have
$
\mathbb{G}.\, Y\in \mathbb{R}^{2\times 2}$  for all $Y\in \mathbb{R}^{2},$ where $ \ (\mathbb{G}.\, Y)_{ij}=\sum\limits_{k=1}^{2}\mathbb{G}_{kij}Y_{k}\,.
$
Thus, we can write
$
\mathbb{D}^2_\lambda i.\nabla_i \psi=\sum_{j=1}^2 \frac{\partial \psi}{\partial i_j}D^2_\lambda i_j.
$ Using the above notations, we can rewrite \eqref{hes2} in the form
$
D^2_\lambda g=(\nabla_\lambda i)^TD_i^2\psi\,(\nabla_\lambda i)+\mathbb{D}^2_\lambda i.\nabla_i \psi.
$
Moreover, using \eqref{grad2} and the fact that $i:{D(\lambda_1,\lambda_2)}\rightarrow {D(i_1,i_2)}$ is a $C^2$-diffeomorphism, i.e. $\det\nabla_\lambda i\neq 0$, we deduce
$
D^2_\lambda g=(\nabla_\lambda i)^TD_i^2\psi\, (\nabla_\lambda i)+\mathbb{D}^2_\lambda i.[(\nabla_\lambda i)^{-T}\nabla_\lambda g],
$
and further
$
(\nabla_\lambda i)^TD_i^2\psi\,(\nabla_\lambda i)=D^2_\lambda g-\mathbb{D}^2_\lambda i.[(\nabla_\lambda i)^{-T}\nabla_\lambda g].
$
In view of the relation
\begin{align}
\langle (\nabla_\lambda i)^TD_i^2\psi\,(\nabla_\lambda i)\xi,\xi\rangle=\langle D_i^2\psi\,(\nabla_\lambda i)\xi,(\nabla_\lambda i)\xi\rangle,\ \ \ \forall\,\xi\in \mathbb{R}^2,
\end{align}
it is clear that
\begin{align}
\qquad D_i^2\psi\ \  &\text{is positive definite in}\ \ (i_1,i_2)\in {D(i_1,i_2)} \\
&\Leftrightarrow  D^2_\lambda g-\mathbb{D}^2_\lambda i.[(\nabla_\lambda i)^{-T}\nabla_\lambda g] \ \ \ \text{is positive definite in} \ \ (\lambda_1,\lambda_2)\in {D(\lambda_1,\lambda_2)}.\notag
\end{align}

Hence, we can conclude:
\begin{lemma}\label{lem} Let $i=(i_1,i_2)^T:{D(\lambda_1,\lambda_2)}\subset\mathbb{R}^2\rightarrow {D(i_1,i_2)}\ \subset \mathbb{R}^2$ be  the
$C^2$-diffeomorphism defined by \eqref{deffuncti},  $\psi:{D(i_1,i_2)}\rightarrow\mathbb{R}$  and $g:{D(\lambda_1,\lambda_2)}\rightarrow \mathbb{R}$  functions of class $C^2$ on their domain of definition, such that
  $g(\lambda_1,\lambda_2):=(\psi \circ i)(\lambda_1,\lambda_2)$.  Then
$D_i^2\psi$ is positive definite in ${D(i_1,i_2)}$ (as a function of $(i_1,i_2)$) if and only if $D^2_\lambda g-\mathbb{D}^2_\lambda i.[(\nabla_\lambda i)^{-T}\nabla_\lambda g]$
is positive definite in ${D(\lambda_1,\lambda_2)}.
$
\end{lemma}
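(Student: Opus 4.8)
The plan is to read the claim off from the second-order chain rule, which has essentially been assembled in the lines preceding the statement. First I would record the Hessian identity
\begin{align*}
D^2_\lambda g = (\nabla_\lambda i)^T D^2_i\psi\,(\nabla_\lambda i) + \mathbb{D}^2_\lambda i.\nabla_i\psi,
\end{align*}
obtained by differentiating $g=\psi\circ i$ twice and collecting terms exactly as in \eqref{hes2}: the first summand carries the contribution of the second derivatives of $\psi$ composed with first derivatives of $i$, while the second collects the first derivatives of $\psi$ composed with the second derivatives of $i$, using the contraction convention $(\mathbb{G}.Y)_{ij}=\sum_k\mathbb{G}_{kij}Y_k$ for the third-order object $\mathbb{D}^2_\lambda i=(D^2_\lambda i_1\,|\,D^2_\lambda i_2)$.

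Next I would use that $i:{D(\lambda_1,\lambda_2)}\to{D(i_1,i_2)}$ is a $C^2$-diffeomorphism, so that at every $(\lambda_1,\lambda_2)\in{D(\lambda_1,\lambda_2)}$ the Jacobian $\nabla_\lambda i$ is invertible (indeed $\det\nabla_\lambda i=\lambda_1-\lambda_2\neq 0$ there). Then the gradient relation \eqref{grad2}, $\nabla_\lambda g=(\nabla_\lambda i)^T\nabla_i\psi$, can be solved for $\nabla_i\psi=(\nabla_\lambda i)^{-T}\nabla_\lambda g$, and substituting this into the Hessian identity and rearranging gives
\begin{align*}
(\nabla_\lambda i)^T D^2_i\psi\,(\nabla_\lambda i) = D^2_\lambda g - \mathbb{D}^2_\lambda i.\big[(\nabla_\lambda i)^{-T}\nabla_\lambda g\big],
\end{align*}
which identifies the modified Hessian of $g$ appearing in the statement with a congruence transform of $D^2_i\psi$.

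Finally I would invoke the elementary fact that a congruence $M\mapsto A^T M A$ with $A$ invertible preserves positive definiteness: since $\langle (\nabla_\lambda i)^T D^2_i\psi\,(\nabla_\lambda i)\,\xi,\xi\rangle=\langle D^2_i\psi\,\eta,\eta\rangle$ with $\eta=(\nabla_\lambda i)\xi$, and $\xi\mapsto\eta$ is a linear bijection of $\R^2$ because $\nabla_\lambda i$ is invertible, the left-hand side is positive definite at a given point of ${D(\lambda_1,\lambda_2)}$ if and only if $D^2_i\psi$ is positive definite at the corresponding image point. Letting this equivalence range over all of ${D(\lambda_1,\lambda_2)}$ — which $i$ maps onto all of ${D(i_1,i_2)}$ — yields exactly the asserted equivalence.

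There is no serious obstacle here; the only points requiring care are the bookkeeping of the third-order object $\mathbb{D}^2_\lambda i$ acting on $\nabla_i\psi$, and the observation that the diffeomorphism property is precisely what makes the equivalence both pointwise (invertibility of $\nabla_\lambda i$) and global (surjectivity of $i$ onto ${D(i_1,i_2)}$). No feature of the specific invariants $i_1=\lambda_1+\lambda_2$, $i_2=\lambda_1\lambda_2$ is used beyond $\det\nabla_\lambda i=\lambda_1-\lambda_2\neq 0$ on ${D(\lambda_1,\lambda_2)}$.
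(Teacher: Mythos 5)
Your proposal is correct and follows essentially the same route as the paper: the paper derives the identity $(\nabla_\lambda i)^T D_i^2\psi\,(\nabla_\lambda i)=D^2_\lambda g-\mathbb{D}^2_\lambda i.[(\nabla_\lambda i)^{-T}\nabla_\lambda g]$ from the second-order chain rule and the gradient relation \eqref{grad2}, and then concludes via the same congruence observation $\langle (\nabla_\lambda i)^T D_i^2\psi\,(\nabla_\lambda i)\xi,\xi\rangle=\langle D_i^2\psi\,(\nabla_\lambda i)\xi,(\nabla_\lambda i)\xi\rangle$ together with invertibility of $\nabla_\lambda i$ on ${D(\lambda_1,\lambda_2)}$. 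No discrepancies to report.
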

It is clear that the above lemma holds true in general, for all $C^2$-diffeomorphisms $i=(i_1,i_2)^T:{D(\lambda_1,\lambda_2)}\subset\mathbb{R}^2\rightarrow {D(i_1,i_2)}\ \subset \mathbb{R}^2$.

\section{Polyconvexity of the  exponentiated Hencky energy in plane elastostatics}
\setcounter{equation}{0}
\subsection{Polyconvexity of the isochoric  exponentiated Hencky energy in plane elastostatics}\label{poly-isochoric}

In this section we consider  a variant of the exponentiated Hencky energy in plane strain, with isochoric part
\begin{align}\label{wf}
W_{\rm iso}(F)=e^{k\,\|{\rm dev}_2 \log U\|^2}=e^{k\,\|\log \frac{U}{\det U^{1/2}}\|^2}.
\end{align}
Let us remark again that for small strains the exponentiated Hencky energy reduces to the well-known quadratic Hencky energy:
\begin{align}\label{th22}
W_{_{\rm eH}}(F)-\left(\frac{\mu}{k}+\frac{\kappa}{2\widehat{k}}\right)&=\underbrace{\frac{\mu}{k}\,e^{k\,\|{\rm dev}_n\log U\|^2}+\frac{\kappa}{2\widehat{k}}\,e^{\widehat{k}\,[{\rm tr}(\log U)]^2}}_{\text{fully nonlinear elasticity}}-\left(\frac{\mu}{k}+\frac{\kappa}{2\widehat{k}}\right)\notag\\
&=\underbrace{\mu\,\|\,{\rm dev}_n\log U\|^2+\frac{\kappa}{2}\,[(\log \det U)]^2}_{\text{materially linear, geometrically nonlinear elasticity}}+\,\text{h.o.t.}\\
&=\underbrace{\mu\,\|\,{\rm dev}_n\,{\rm sym} \nabla u\|^2+\frac{\kappa}{2}\,[\tr({\rm sym} \nabla u)]^2}_{\text{linear elasticity}}+\,\text{h.o.t.}\,,\notag
\end{align}
where  $u:\mathbb{R}^n\rightarrow\mathbb{R}^n$ is the displacement,
$
F=\nabla \varphi=\id +\nabla u
$ is the gradient of deformation $\varphi:\mathbb{R}^n\rightarrow\mathbb{R}^n$ and h.o.t. denotes terms of higher order of $\|\,{\rm dev}_n\log U\|^2$ and $\frac{\kappa}{2}\,[(\log \det U)]^2$.

Coming back to the 2D case, as $W$ is an objective, isotropic tensor function, we can express it as a function of the singular values of $F$, that is the eigenvalues $\lam_1,\lam_2$ of $U=\sqrt{F^TF}$, or of the principal invariants $i_1=\lambda_1+\lambda_2$, $i_2=\lambda_1\lambda_2$, i.e.
$
W(F)=P(F, \det F)=g(\lambda_1,\lambda_2)=\psi(i_1,i_2).
$
For polyconvexity, the representation of the function $W(F)$ in terms of $P(F,\det F)$ is  not unique, see  \eqref{wf}. However, the representations $W(F)=g(\lambda_1,\lambda_2)
 =\psi(i_1,i_2)$ are unique. This fact is implied by the following lemma:
\begin{lemma}
\label{thm:w2d}
 Let $k\in\R$ and the matrix $F\in{\rm GL}^+(2)$ with singular values $\lam_1,\lam_2$.  Then
 \begin{align}\label{wf3}
W(F)=e^{k\,\|{\rm dev}_2 \log U\|^2}=e^{k\,\|\log \frac{U}{\det U^{1/2}}\|^2}=g(\lambda_1,\lambda_2),\ \ \text{where}\  g:\mathbb{R}^2_+\rightarrow\mathbb{R},\ \  g(\lambda_1,\lambda_2):=e^{\frac{k}{2}\,\left(\log \frac{\lambda_1}{\lambda_2}\right)^2}.
\end{align}
\end{lemma}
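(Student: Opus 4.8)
The plan is to reduce the tensor expression $\|\dev_2\log U\|^2$ to an explicit function of the singular values $\lambda_1,\lambda_2$ by diagonalizing $U$, and then to simplify. Since $U\in\mathrm{PSym}(2)$, there is $Q\in\SO(2)$ with $U=Q^T\diag(\lambda_1,\lambda_2)Q$, hence $\log U=Q^T\diag(\log\lambda_1,\log\lambda_2)Q$, and the Frobenius norm (and trace) are invariant under this conjugation. Therefore $\tr(\log U)=\log\lambda_1+\log\lambda_2=\log(\lambda_1\lambda_2)=\log\det U$, and $\dev_2\log U$ has eigenvalues $\log\lambda_i-\tfrac12(\log\lambda_1+\log\lambda_2)$, i.e. $\pm\tfrac12\log\tfrac{\lambda_1}{\lambda_2}$.

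The key computation is then just
\begin{align*}
\|\dev_2\log U\|^2=\Bigl(\tfrac12\log\tfrac{\lambda_1}{\lambda_2}\Bigr)^2+\Bigl(-\tfrac12\log\tfrac{\lambda_1}{\lambda_2}\Bigr)^2=\tfrac12\Bigl(\log\tfrac{\lambda_1}{\lambda_2}\Bigr)^2.
\end{align*}
Substituting into $W(F)=e^{k\|\dev_2\log U\|^2}$ gives $W(F)=e^{\frac{k}{2}(\log\frac{\lambda_1}{\lambda_2})^2}=g(\lambda_1,\lambda_2)$ as claimed. The intermediate identity $\|\dev_2\log U\|^2=\|\log\frac{U}{\det U^{1/2}}\|^2$ is immediate from $\log\frac{U}{\det U^{1/2}}=\log U-\tfrac12\log(\det U)\cdot\id=\log U-\tfrac12\tr(\log U)\cdot\id=\dev_2\log U$, using $n=2$.

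There is essentially no obstacle here: the only point requiring a word of care is that the spectral decomposition of the symmetric positive-definite matrix $U$ exists and that $\log$ of a symmetric positive-definite matrix is defined by acting on eigenvalues, so that conjugation-invariance of $\|\cdot\|$ and $\tr$ applies — all of which are standard. One may additionally note that $g$ is well-defined on all of $\R_+^2$ (not merely on $D(\lambda_1,\lambda_2)$) and is symmetric in $\lambda_1,\lambda_2$, since $(\log\frac{\lambda_1}{\lambda_2})^2=(\log\frac{\lambda_2}{\lambda_1})^2$, which is consistent with the uniqueness of the representation $W(F)=g(\lambda_1,\lambda_2)$. This establishes the lemma.
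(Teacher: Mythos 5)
Your proposal is correct and follows essentially the same route as the paper: diagonalize $U$ by its spectral representation, observe that $\dev_2\log U$ then has eigenvalues $\pm\tfrac12\log\tfrac{\lambda_1}{\lambda_2}$, and compute the Frobenius norm to get $\tfrac12\bigl(\log\tfrac{\lambda_1}{\lambda_2}\bigr)^2$. The additional remarks on the identity $\dev_2\log U=\log\frac{U}{\det U^{1/2}}$ and on the symmetry of $g$ are consistent with the paper and require no changes.
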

\begin{proof} The matrix $U$ is positive definite and symmetric and therefore can be assumed, by the spectral representation, to be diagonal, to obtain
\begin{align*}
 \norm{\dev_2\log U}^2&=\norm{\log U-\tel2(\log \lam_1+\log \lam_2)\id}^2=\left\|\matr{\tel2\log\lam_1-\tel2\log\lam_2&0\\0&\tel2\log\lam_2-\tel2\log\lam_1}\right\|^2\\
 &=\tel4\left[2\,(\log \lam_1-\log\lam_2)^2\right]=\tel2\left(\log\frac{\lam_1}{\lam_2}\right)^2. \qedhere
\end{align*}
\end{proof}
\begin{remark}{\rm (Non-convexity of $g$)}
 Note that  the function $g:\mathbb{R}_+^2\rightarrow\mathbb{R}$ defined in  \eqref{wf3} is not convex. We have for the Hessian
\begin{align}\label{hesf} D^2_\lambda g=k\,e^{\frac{k}{2} \log ^2\frac{\lambda_1}{\lambda_2}}
\left(
\begin{array}{cc}
 \frac{k \log ^2\frac{\lambda_1}{\lambda_2}}{\lambda_1^2}-\frac{  \log \frac{\lambda_1}{\lambda_2}}{\lambda_1^2}+\frac{1 }{\lambda_1^2} & -\frac{k \log ^2\frac{\lambda_1}{\lambda_2}}{\lambda_1 \lambda_2}-\frac{1 }{\lambda_1 \lambda_2} \\
 -\frac{k \log ^2\frac{\lambda_1}{\lambda_2}}{\lambda_1 \lambda_2}-\frac{1 }{\lambda_1 \lambda_2} & \frac{k \log ^2\frac{\lambda_1}{\lambda_2}}{\lambda_2^2}+\frac{ \log \frac{\lambda_1}{\lambda_2}}{\lambda_2^2}+\frac{1}{\lambda_2^2} \\
\end{array}
\right)
\end{align}
and, for all $k\in\R$,
$
{\rm det} \, D^2_\lambda g=-\frac{k^2 \log ^2\frac{\lambda_1}{\lambda_2} \,e^{k \log ^2\frac{\lambda_1}{\lambda_2}}}{\lambda_1^2 \lambda_2^2}\leq 0,$ for all $\lambda_1,\lambda_2>0.
$
 The function $g$ is, therefore,  not a convex function in $\lambda_1,\lambda_2$.
By a general theorem \cite[Theorem 5.1]{Ball77}, this implies that $e^{k\,\|{\rm dev}_2 \log U\|^2}$ is not convex as a function of $U\in{\rm {\rm Psym}}(2)$. Thus the non-convexity of $g$ allows us to conclude that $W$ cannot be a convex function of $F$ \cite{Davis57}.
\end{remark}

In the following, we can assume without loss of generality (by the symmetry of $\log^2 \frac{\lambda_1}{\lambda_2}$ under inversion), that
$
\lambda_1\geq \lambda_2.
$

\begin{proposition}\label{prop:posdefHessian}
The map
\begin{align}\label{defpsi}
\psi:{D(i_1,i_2)}\rightarrow \mathbb{R}_+,\quad \psi(i_1,i_2)=e^{\frac{k}{2}\log^2 \frac{i_1+\sqrt{i_1^2-4i_2}}{i_1-\sqrt{i_1^2-4i_2}}}
\end{align}
 has a positive definite Hessian matrix $D^2\psi $ in the domain ${D(i_1,i_2)}$, as a function of $(i_1,i_2)$, if and only if  $k\geq \dd\frac{1}{3}$.
\end{proposition}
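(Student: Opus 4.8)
The plan is to use Lemma~\ref{lem}. With $g\colon \R_+^2\to\R$, $g(\lambda_1,\lambda_2)=e^{\frac k2\log^2(\lambda_1/\lambda_2)}$ from Lemma~\ref{thm:w2d}, one has $\psi\circ i=g$, $g\in C^2$, and $i=(i_1,i_2)$ is a $C^2$-diffeomorphism on $D(\lambda_1,\lambda_2)$, so $D^2\psi$ is positive definite on $D(i_1,i_2)$ if and only if
\[
M:=D^2_\lambda g-\mathbb{D}^2_\lambda i.\bigl[(\nabla_\lambda i)^{-T}\nabla_\lambda g\bigr]
\]
is positive definite on $D(\lambda_1,\lambda_2)$, and $D^2_\lambda g$ is already recorded in \eqref{hesf}. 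The correction term simplifies drastically: since $i_1=\lambda_1+\lambda_2$ is affine we have $D^2_\lambda i_1=0$, while $D^2_\lambda i_2=\matrs{0&1\\1&0}$, so $\mathbb{D}^2_\lambda i.[(\nabla_\lambda i)^{-T}\nabla_\lambda g]=(\partial_{i_2}\psi)\matrs{0&1\\1&0}$, where $\partial_{i_2}\psi$ is the second entry of $(\nabla_\lambda i)^{-T}\nabla_\lambda g$. Using $\nabla_\lambda g=gk\,t\,(1/\lambda_1,-1/\lambda_2)^T$ with $t:=\log(\lambda_1/\lambda_2)$ and the explicit inverse $(\nabla_\lambda i)^{-T}=\tfrac1{\lambda_1-\lambda_2}\matrs{\lambda_1&-\lambda_2\\-1&1}$, one finds $\partial_{i_2}\psi=-\tfrac{gk\,tc}{\lambda_1\lambda_2}$ with $c:=\tfrac{\lambda_1+\lambda_2}{\lambda_1-\lambda_2}$. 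Thus $M$ coincides with $D^2_\lambda g$ except in the off-diagonal entry.

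By the symmetry noted before the Proposition we take $\lambda_1>\lambda_2$ on $D(\lambda_1,\lambda_2)$, i.e.\ $t>0$ and $c=\coth\tfrac t2>1$. Substituting yields
\[
M=\frac{gk}{\lambda_1\lambda_2}\matr{\tfrac{\lambda_2}{\lambda_1}(kt^2-t+1)&tc-kt^2-1\\ tc-kt^2-1&\tfrac{\lambda_1}{\lambda_2}(kt^2+t+1)},
\]
whose $(2,2)$ entry is strictly positive whenever $k>0$; hence for $k>0$ the matrix $M$ is positive definite exactly when $\det M>0$. A short expansion collapses the determinant to
\[
\det M=\frac{g^2k^2}{\lambda_1^2\lambda_2^2}\,t\,\bigl[\,2c(1+kt^2)-t(1+c^2)\,\bigr],
\]
so positive definiteness at the point with parameter $t$ is equivalent to $k>f(t):=\dfrac{t(1+c^2)-2c}{2ct^2}$. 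Therefore everything comes down to showing $\sup_{t>0}f(t)=\tfrac13$, with the value $\tfrac13$ not attained. (The remaining values $k\le0$ are excluded at once: for $k<0$ the $(2,2)$ entry of $M$ is negative for small $t$, and $k=0$ makes $M\equiv 0$.)

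For the supremum claim, clearing denominators in $\tfrac13-f(t)$ and using $\sinh t=2\sinh\tfrac t2\cosh\tfrac t2$ and $\cosh t=\cosh^2\tfrac t2+\sinh^2\tfrac t2$ (this is where the half-angle disappears; equivalently $f(t)<\tfrac13\Leftrightarrow t\coth t<1+\tfrac{t^2}{3}$) gives the identity
\[
\tfrac13-f(t)=\frac{p(t)}{3\,t^2\sinh t},\qquad p(t):=(3+t^2)\sinh t-3t\cosh t.
\]
Positivity of $p$ on $(0,\infty)$ is a two-step monotonicity argument: $p(0)=0$ and $p'(t)=t\,(t\cosh t-\sinh t)$, while $q(t):=t\cosh t-\sinh t$ satisfies $q(0)=0$ and $q'(t)=t\sinh t>0$, so $q>0$, hence $p'>0$, hence $p>0$ for $t>0$; consequently $f(t)<\tfrac13$ for all $t>0$. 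Since moreover $p(0)=p'(0)=p''(0)=p'''(0)=0$ while $3t^2\sinh t\sim 3t^3$, we get $\tfrac13-f(t)\to 0$, i.e.\ $f(t)\to\tfrac13$ as $t\searrow0$; thus $\sup_{t>0}f(t)=\tfrac13$, unattained. Hence: if $k\ge\tfrac13$, then $k>f(t)$ for every $t>0$, so $\det M>0$, $M$ is positive definite, and by Lemma~\ref{lem} so is $D^2\psi$ on $D(i_1,i_2)$; if $k<\tfrac13$, continuity of $f$ together with $\lim_{t\searrow0}f(t)=\tfrac13$ furnishes $t_0>0$ with $f(t_0)>k$, and at any $\lambda_1>\lambda_2>0$ with $\log(\lambda_1/\lambda_2)=t_0$ we get $\det M<0$, so $D^2\psi$ fails to be positive definite there.

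The genuine obstacle is the global inequality $f(t)<\tfrac13$ on all of $(0,\infty)$: the Taylor expansion at the origin (which gives $f(t)=\tfrac13-\tfrac{t^2}{45}+\dots$) only settles a neighbourhood of $0$, and since $t\coth t$ has a power series with non-monotone coefficients there is no soft argument — one really needs the reformulation as positivity of $p$ and the nested monotonicity of $p$ and $q$. Everything else (the collapse of $\det M$, the evaluation of $\partial_{i_2}\psi$) is routine algebra.
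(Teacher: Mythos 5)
Your proposal is correct, and its skeleton is the same as the paper's: you invoke Lemma \ref{lem}, compute exactly the matrix $M=D^2_\lambda g-\mathbb{D}^2_\lambda i.[(\nabla_\lambda i)^{-T}\nabla_\lambda g]$ whose entries and determinant agree with the paper's formulas (your expression is the paper's after substituting $c=\frac{\lambda_1+\lambda_2}{\lambda_1-\lambda_2}$), and reduce everything to positivity of $\det M$. The differences lie in the endgame and are worth recording. First, you test the $(2,2)$ entry, which is positive for every $k>0$ and $t=\log(\lambda_1/\lambda_2)>0$, instead of the $(1,1)$ entry, so the paper's intermediate threshold $k>\tfrac14$ never enters. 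Second, passing to the logarithmic variable with $c=\coth(t/2)$ converts the paper's function $\widehat r(t)=k\,(t^2-1)\log^2 t-(t^2+1)\log t+(t^2-1)$ in the ratio variable $t=\lambda_1/\lambda_2$, which the paper handles by three successive differentiations $\widehat r'$, $\widehat r''$, $\widehat r'''$ with $\widehat r(1)=\widehat r'(1)=\widehat r''(1)=0$, into the classical inequality $t\coth t<1+\tfrac{t^2}{3}$, which you settle with the shorter nested monotonicity of $p(t)=(3+t^2)\sinh t-3t\cosh t$ and $q(t)=t\cosh t-\sinh t$. Third, your necessity argument is more explicit than the paper's: you show $\sup_{t>0}\frac{t\coth t-1}{t^2}=\tfrac13$, approached only as $t\searrow 0$ (i.e.\ as $\lambda_1/\lambda_2\to 1$), and exhibit a point with $\det M<0$ whenever $k<\tfrac13$, whereas the paper's ``only if'' is carried by the remark that $\widehat r'''\geq 0$ everywhere forces $k\geq\tfrac13$, which addresses the third derivative rather than $\widehat r$ itself; you also dispose of $k\leq 0$ explicitly. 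In short: same decomposition, same determinant criterion, but your hyperbolic reformulation buys a shorter calculus argument and a sharper, fully explicit necessity step, while the paper's version stays in the original ratio variable at the cost of one extra differentiation.
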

\begin{proof}  To prove this result we will use the criterion given by Lemma \ref{lem}. Let us remark that
\begin{align}\label{suplim}
\mathbb{D}^2_\lambda i.[(\nabla_\lambda i)^{-T}\nabla_\lambda g]=- k\,e^{\frac{k}{2} \log ^2\frac{\lambda_1}{\lambda_2}}  \left(\log \frac{\lambda_1}{\lambda_2}\right)\frac{1}{(\lambda_1-\lambda_2)}\left(\frac{1}{\lambda_1}+\frac{1}{ \lambda_2}\right)\left(
\begin{array}{cc}
 0 & 1 \\
 1 & 0 \\
\end{array}
\right)
\end{align}
and
\begin{align}
{\rm det}\, ( \mathbb{D}^2_\lambda i.[(\nabla_\lambda i)^{-T}\nabla_\lambda g])=k^2 e^{k \log ^2\frac{\lambda_1}{\lambda_2}}  \left(\log^2 \frac{\lambda_1}{\lambda_2}\right)\frac{1}{(\lambda_1-\lambda_2)^2}\left(\frac{1}{\lambda_1}+\frac{1}{ \lambda_2}\right)^2\geq 0.
\end{align}
In order to justify the above relations we outline the following calculations:
\begin{align}
&D^2_\lambda i_1=\left(
\begin{array}{cc}
 0 & 0 \\
 0 & 0 \\
\end{array}
\right)
,\ \ \ \
D^2_\lambda i_2=
\left(
\begin{array}{cc}
 0 & 1 \\
 1 & 0 \\
\end{array}
\right),\qquad
\nabla_\lambda g= k\, e^{\frac{k}{2} \log ^2\frac{\lambda_1}{\lambda_2}}  \log \frac{\lambda_1}{\lambda_2}
\left(
\begin{array}{c}
 \frac{1}{\lambda_1} \vspace{1mm}\\
 -\frac{1}{\lambda_2} \\
\end{array}
\right),
\vspace{2mm}\\
&\notag\nabla_\lambda i\dd=
\left(
\begin{array}{cc}
 1 & 1 \vspace{1mm}\\
 \lambda_2 & \lambda_1 \\
\end{array}
\right),
\ \
(\nabla_\lambda i)^{-T}=\frac{1}{\lambda_1-\lambda_2}\left(
\begin{array}{cc}
 \lambda_1 & -\lambda_2 \vspace{1mm}\\
 -1 & 1 \\
\end{array}
\right),
\vspace{3mm}
\\\notag
&(\nabla_\lambda i)^{-T}\nabla_\lambda g=k \,e^{\frac{k}{2} \log ^2\frac{\lambda_1}{\lambda_2}}  \left(\log \frac{\lambda_1}{\lambda_2}\right)\frac{1}{(\lambda_1-\lambda_2)}\left(
\begin{array}{c}
 2 \vspace{1mm}\\
 -\frac{1}{\lambda_1}-\frac{1}{ \lambda_2} \\
\end{array}
\right).
\end{align}

In view of \eqref{hesf} and \eqref{suplim} we have
\begin{align}
&D^2_\lambda g-\mathbb{D}^2_\lambda i.[(\nabla_\lambda i)^{-T}\nabla_\lambda g]\\&=k\, e^{\frac{k}{2} \log ^2\frac{\lambda_1}{\lambda_2}}\left(
\begin{array}{cc}
 \frac{ k \log ^2\frac{\lambda_1}{\lambda_2}-\log \frac{\lambda_1}{\lambda_2}+1}{\lambda_1^2} & -\frac{k (\lambda_1-\lambda_2) \log ^2\frac{\lambda_1}{\lambda_2}-(\lambda_1+\lambda_2) \log \frac{\lambda_1}{\lambda_2}+\lambda_1-\lambda_2}{\lambda_1 (\lambda_1-\lambda_2) \lambda_2} \\
 -\frac{k (\lambda_1-\lambda_2) \log ^2\frac{\lambda_1}{\lambda_2}-(\lambda_1+\lambda_2) \log \frac{\lambda_1}{\lambda_2}+\lambda_1-\lambda_2}{\lambda_1 (\lambda_1-\lambda_2) \lambda_2} & \frac{ k  \log ^2\frac{\lambda_1}{\lambda_2}+\log \frac{\lambda_1}{\lambda_2}+1}{\lambda_2^2} \\
\end{array}
\right).\notag
\end{align}
First, let us study the sign of the   (1,1)-entry $
 \widetilde{g}(\lambda_1,\lambda_2):= e^{\frac{k}{2} \log
^2\frac{\lambda_1}{\lambda_2}} \frac{1}{\lambda_1^2}\left[\,k \log
^2\frac{\lambda_1}{\lambda_2}-  \log
\frac{\lambda_1}{\lambda_2}+1\right]$ of the above matrix, which is related to the Hessian matrix of $\psi(i_1,i_2)$.
We introduce the function $r:[0,\infty)\rightarrow\mathbb{R}$ given by
$
 r(t)= k\, t^2-  t+1.
$
It is clear that if $k>\dd\frac{1}{4}$, then
$
 r(t)= k\, t^2-  t+1>\left(\frac{1}{2}t-1\right)^2\geq 0, $ for all $t\in\mathbb{R}.$
Moreover, if $r(t)>0$ for all $t\in [0,\infty)$, then
$
k>\frac{1}{4}=\max\limits_{t\in[0,\infty)}\left\{\frac{t-1}{t^2}\right\}.
$
Thus, $r(t)>0$ for all $t\in[0,\infty)$ if and only if\,\footnote{In fact, $k\, t^2-
 t+1>0$ for all $t\in\mathbb{R}$ if and only if $k>\dd\frac{1}{4}$\,.}
$k>\dd\frac{1}{4}$. In consequence, we deduce
\begin{align}\label{devsepconv}
 &\widetilde{g}(\lambda_1,\lambda_2)=k \,e^{\frac{k}{2} \log
^2\frac{\lambda_1}{\lambda_2}}
\frac{1}{\lambda_1^2}\,r\left(\log\frac{\lambda_1}{\lambda_2}
\right)>0\ \ \text{for all }\lambda_1\geq\lambda_2\in\R^+\ \ \text{if and
only if} \ \ \ k>\dd\frac{1}{4}.
\end{align}

On the other hand
\begin{align}
{\rm det}[D^2_\lambda g-\mathbb{D}^2_\lambda i.[&(\nabla_\lambda i)^{-T}\nabla_\lambda g] ]\\
&=\frac{2 k^2  e^{k \log ^2\frac{\lambda_1}{\lambda_2}} }{\lambda_1^2 \lambda_2^2 (\lambda_1-\lambda_2)^2}\left(\log \frac{\lambda_1}{\lambda_2}\right)\left[k{\left(\lambda_1^2-\lambda_2^2\right)} \log ^2\frac{\lambda_1}{\lambda_2}-{\left(\lambda_1^2+\lambda_2^2\right)} \log \frac{\lambda_1}{\lambda_2}+{\left(\lambda_1^2-\lambda_2^2\right)}\right].\notag
\end{align}
Hence
\begin{align}
{\rm det}[D^2_\lambda g&-\mathbb{D}^2_\lambda i.[(\nabla_\lambda i)^{-T}\nabla_\lambda g] ]>0\ \ \forall\, (\lambda_1,\lambda_2)\in {D(\lambda_1,\lambda_2)}\\\Leftrightarrow
&\left(\log \frac{\lambda_1}{\lambda_2}\right)\left[k {\left(\lambda_1^2-\lambda_2^2\right)} \log ^2\frac{\lambda_1}{\lambda_2}-{\left(\lambda_1^2+\lambda_2^2\right)} \log \frac{\lambda_1}{\lambda_2}+{\left(\lambda_1^2-\lambda_2^2\right)}\right]>0\ \ \forall\, (\lambda_1,\lambda_2)\in {D(\lambda_1,\lambda_2)}.\notag
\end{align}

In the following we will prove that for all $\lambda_1>\lambda_2>0$,
$
  \left(\log \frac{\lambda_1}{\lambda_2}\right)\lambda_2^2\,
\widehat{r}\left(\frac{\lambda_1}{\lambda_2}\right)>0,
$
where the function $\widehat{r}: (0,\infty)\rightarrow\mathbb{R}$ is defined by
$
\widehat{r}(t):=k\, {\left(t^2-1\right)} \log ^2t-{\left(t^2+1\right)} \log  t+{\left(t^2-1\right)}.
$
\begin{figure}[h!]
\centering
\includegraphics[scale=1]{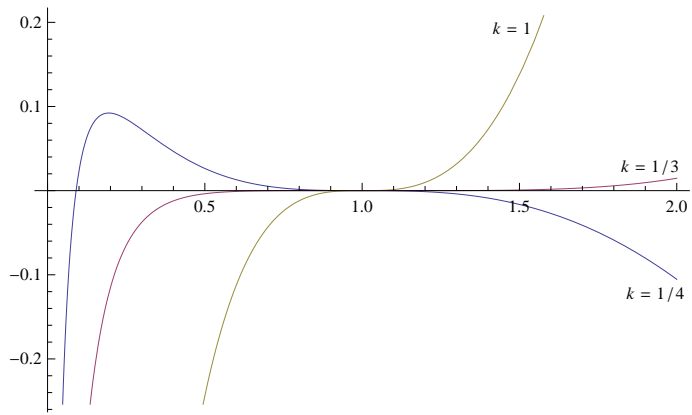}
\caption{{\footnotesize $\widehat{r}(t)$ for different values of $k$.}}
\label{figure3}
\end{figure}%
To this aim, we prove  that
$\widehat{r}(t)> 0,$ for all $t\in(1,\infty)$ if and only if $k\geq
\dd\frac{1}{3}$.

The first derivative of $\widehat{r}$ is given by
$
\widehat{r}'( t)=\frac{k}{2} \left(4\, t\, \log ^2 t+4 \,t \,\log  t-\frac{4 \,\log  t}{t}\right)+t-\frac{1}{t}-2\, t \,\log  t,
$
 the second derivative by
$
\widehat{r}''(t)=\frac{k}{2} \left(\frac{4\, \left(t^2-1\right)}{t^2}+4\, \left(\frac{1}{t^2}+3\right) \log  t+4 \,\log ^2 t\right)-\frac{t^2-1}{t^2}-2 \,\log  t,
$
and
$
\widehat{r}'''(t)=2\,\frac{ (3\,k-1) \left(t^2+1\right)+2\,k\, \left(t^2-1\right) \log  t}{t^3}.
$
We also have that
$
\widehat{r}'(1)=0 \ \ \ \text{and} \ \ \ \widehat{r}''(1)=0.
$  It is easy to see that, if
 $ k\geq\frac{1}{3}$
 then
 $
(3\,k-1) \left(t^2+1\right)+2\,k \left(t^2-1\right) \log  t> 0,$
for all $t\in (1,\infty)$.

 This means that $r''(\cdot)$ is a monotone increasing function, which implies
 $\widehat{r}''(t)> \widehat{r}''(1)=0$ if $t>1.
 $
This implies that $\widehat{r}'(\cdot)$ is 
 monotone increasing on $(1,\infty)$, i.e.
 $\widehat{r}'(t)> \widehat{r}(1)=0$ if $t>1.$
 Hence
 $
 \widehat{r}'(t)> 0 \ \ $ for all $t\in(1,\infty)$,
 i.e. $\widehat{r}$ is monotone increasing.  In conclusion, if
$k\geq\dd\frac{1}{3}$, then $\widehat{r}$ is monotone increasing and
convex on $(1,\infty)$, and $\widehat{r}'(1)=0=\widehat{r}(1)$. Hence, we have
proved that $\widehat{r}(t)>0$ for all $t\in(1,\infty)$ if  $k\geq \dd\frac{1}{3}$. In fact
$\widehat{r}'''(t)\geq 0$ for all $t\in (0,\infty)$ if and only if
\begin{align}
k\geq \frac{1}{3}=\sup\limits_{t\in (1,\infty)}\left\{\frac{t^2+1}{3 \,t^2+2\, t^2 \log  t-2\, \log  t+3}\right\}.
\end{align}
This  completes the proof.
\end{proof}

It is possible to have a
direct proof of the positive definiteness of the Hessian matrix $D^2\psi $ in
the domain ${D(i_1,i_2)}$ but this direct method leads to complicated calculations in the three-dimensional case (see Appendix \ref{convdir}).
\begin{remark}
Assuming $\lambda_1>\lambda_2$, with the help of the substitution
$t=\dd\frac{\lambda_1}{\lambda_2}$ {and the choice $k=2$ in
\eqref{wf3}} we obtain the function $s:(1,\infty)\rightarrow\mathbb{R}$,
$s(t)=e^{(\log t)^2}$. The function $s(\cdot)$ is convex and monotone increasing
in $t$, for $t\in(1,\infty)$. However,
\begin{itemize}
\item[i)] $(\lambda_1,\lambda_2)\mapsto t=\dd\frac{\lambda_1}{\lambda_2}$ is not convex  as a function of $(\lambda_1,\lambda_2)$;
\item[ii)] $(i_1,i_2)\mapsto t=\dd\frac{\lambda_1}{\lambda_2}=\dd\frac{i_1+\sqrt{i_1^2-4i_2}}{i_1-\sqrt{i_1^2-4i_2}}$ is not convex as a function of  the two invariants $(i_1,i_2)$.
\end{itemize}
It seems, therefore, that the conclusion of convexity of the map $\psi$ defined by \eqref{defpsi} cannot simply be inferred from the composition of a convex mapping with the convex and non-decreasing mapping $s:(1,\infty)\rightarrow\mathbb{R}$, $s(t)=e^{(\log t)^2}$.
\end{remark}

 In the following we prove that the function $\psi $ considered in Proposition \ref{prop:posdefHessian} is  convex on ${D(i_1,i_2)}$ in the sense of Busemann, Ewald and Shephard's definition \cite{Busemann}, i.e. $\psi$  is the restriction to ${D(i_1,i_2)}$ of a real-valued convex
function (in the usual sense) defined on the convex hull of ${D(i_1,i_2)}$; equivalently,  the function $\psi$ can be extended to a convex function defined on the convex hull $Co {D(i_1,i_2)}=\mathbb{R}_+^2$ of ${D(i_1,i_2)}$ \cite{Rosakis92}. From \cite{Busemann} we have:
\begin{theorem}{\rm (Busemann, Ewald and Shephard \cite[page
6]{Busemann})}\label{ThBES}
A function $\phi$ defined on an arbitrary set $M\subset\mathbb{R}^n$ is convex if and only if it is bounded linearly below and the inequality
\begin{align}\label{condBES}
\phi(x)\leq\sum\limits_{i=1}^r\mu_i\, \phi(x_i),\ \ 1\leq r<\infty
\end{align}
holds for all $x_1,x_2,...,x_r\in M$ , $\sum\limits_{i=1}^r\mu_i=1$ and $x=\sum\limits_{i=1}^r\mu_i x_i$ lying in $M$. The convex extension of $\phi$ to the convex hull of $M$ is
$
\widehat{\phi}(x)=\inf\left\{\sum\limits_{i=1}^r\mu_i\, \phi(x_i)\ : \ x=\sum\limits_{i=1}^r\mu_i x_i,\
\sum\limits_{i=1}^r\mu_i=1,\ 1\leq r<\infty\right\}.
$
\end{theorem}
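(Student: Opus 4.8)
The plan is to prove the two implications of the equivalence and, along the way, to identify the function $\widehat\phi$ in the statement as the required convex extension.

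\emph{Necessity.} Assume $\phi$ is convex on $M$, i.e. $\phi=\Phi|_M$ for some finite-valued convex function $\Phi$ defined on the convex hull $\mathrm{Co}(M)$. Given $x_1,\dots,x_r\in M$, weights $\mu_i\ge 0$ with $\sum_{i=1}^r\mu_i=1$, and $x=\sum_{i=1}^r\mu_i x_i\in M$, the finite form of Jensen's inequality for $\Phi$ gives $\phi(x)=\Phi(x)\le\sum_{i=1}^r\mu_i\Phi(x_i)=\sum_{i=1}^r\mu_i\phi(x_i)$, which is exactly \eqref{condBES}. For the linear lower bound, choose a point $x_0$ in the (nonempty) relative interior of $\mathrm{Co}(M)$; since $\Phi$ is real-valued and convex there is a supporting affine functional of $\Phi$ at $x_0$ within the affine hull of $M$, and extending it to an affine map $\ell\colon\R^n\to\R$ yields $\Phi\ge\ell$ on $\mathrm{Co}(M)$, hence $\phi\ge\ell$ on $M$; thus $\phi$ is bounded linearly below.

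\emph{Sufficiency and the extension formula.} Conversely, assume \eqref{condBES} holds and $\phi\ge\ell$ on $M$ for some affine $\ell$. Define $\widehat\phi$ on $\mathrm{Co}(M)$ by the stated infimum formula. By Carathéodory's theorem every $x\in\mathrm{Co}(M)$ is a finite convex combination of points of $M$, so the infimum is taken over a nonempty set of real numbers; moreover, for any representation $x=\sum_i\mu_i x_i$ with $x_i\in M$, affineness of $\ell$ gives $\sum_i\mu_i\phi(x_i)\ge\sum_i\mu_i\ell(x_i)=\ell(x)$, so $\widehat\phi(x)\ge\ell(x)>-\infty$, and $\widehat\phi\colon\mathrm{Co}(M)\to\R$ is well defined. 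Convexity follows by concatenation: if $x=\sum_i\mu_i x_i$ and $y=\sum_j\nu_j y_j$ are representations and $t\in[0,1]$, then $tx+(1-t)y=\sum_i(t\mu_i)x_i+\sum_j((1-t)\nu_j)y_j$ is again a finite convex combination of points of $M$, whence $\widehat\phi(tx+(1-t)y)\le t\sum_i\mu_i\phi(x_i)+(1-t)\sum_j\nu_j\phi(y_j)$; passing to the infimum over the two families gives $\widehat\phi(tx+(1-t)y)\le t\,\widehat\phi(x)+(1-t)\,\widehat\phi(y)$. Finally $\widehat\phi|_M=\phi$: for $x\in M$ the trivial representation $x=1\cdot x$ gives $\widehat\phi(x)\le\phi(x)$, while \eqref{condBES} applied to any representation $x=\sum_i\mu_i x_i$ (with $x\in M$) gives $\phi(x)\le\sum_i\mu_i\phi(x_i)$, hence $\phi(x)\le\widehat\phi(x)$. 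Thus $\widehat\phi$ is a finite convex function on $\mathrm{Co}(M)$ extending $\phi$, proving sufficiency, and it coincides with the function displayed in the statement.

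\emph{Main obstacle.} The only step that is more than bookkeeping is the construction of the affine minorant in the necessity part: it rests on the supporting-hyperplane theorem at a relative-interior point of $\mathrm{Co}(M)$ and genuinely uses that $\Phi$ is real-valued, since a convex function allowed to descend to $-\infty$ would admit no affine minorant. On the sufficiency side everything is elementary, provided one keeps in mind that the number $r$ of points in \eqref{condBES} is unrestricted, so merging two convex-combination representations into a single longer one (of size the sum of the two sizes) is always permissible.
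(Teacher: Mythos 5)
Your proof is correct, but note that the paper itself offers no proof of this statement: it is quoted verbatim (with citation to Busemann, Ewald and Shephard) and then simply applied, so there is no internal argument to compare against. Your two directions are the standard ones and both are sound: necessity via the finite Jensen inequality plus a supporting affine functional of the real-valued extension at a relative-interior point of $\mathrm{Co}(M)$ (this is exactly where finiteness of the extension is needed, as you observe), and sufficiency via the displayed infimum, where the linear minorant $\ell$ guarantees $\widehat{\phi}(x)\geq \ell(x)>-\infty$, Carath\'eodory guarantees nonemptiness of the representing set, concatenation of representations gives convexity, and \eqref{condBES} together with the trivial representation gives $\widehat{\phi}|_M=\phi$. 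One small addition would round it off: for any convex extension $\Phi$ of $\phi$ to $\mathrm{Co}(M)$ one has $\Phi(x)\leq\sum_i\mu_i\phi(x_i)$ for every representation, hence $\Phi\leq\widehat{\phi}$, so $\widehat{\phi}$ is the largest convex extension — this justifies the definite article in ``\emph{the} convex extension'' in the quoted statement. With that remark your argument is a complete, self-contained proof of the cited theorem, consistent with the way the paper uses it.
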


As we already mentioned in the previous section, according to the definition
\eqref{lii}, we can extend the function $i=(i_1,i_2)$ on the curve $\gamma_2$
keeping its one-to-one property. In the following, we denote by
$\tilde{i}(\cdot,\cdot)$ the extension of $i(\cdot,\cdot)$ to the domain
${D(i_1,i_2)}\cup \gamma_2$. In fact we can extend the function $i=(i_1,i_2)$,
preserving  the definition \eqref{lii},   in all $\mathbb{R}_+^2$, which is the
convex hull of ${D(i_1,i_2)}$, but it does not remain a one-to-one function and
also we do not have a mechanical interpretation for this choice. However, we
intend to construct an energy function
$\widehat{\psi}:\mathbb{R}_+^2\rightarrow\mathbb{R}_+^2$ which is convex in all
$\mathbb{R}_+^2$, the  convex hull of ${D(i_1,i_2)}$, using the above results.

First, we extend the function $\psi$ to ${D(i_1,i_2)}\cup \gamma_2$ by
\begin{align}
\widetilde{\psi}(i_1,i_2)=\left\{
\begin{array}{ll}
\psi(i_1,i_2)& \ \ \text{if} \ \ \ (i_1,i_2)\in {D(i_1,i_2)},\vspace{2mm}\\
1& \  \ \text{if} \ \ \ (i_1,i_2)\in \gamma_2.
\end{array}
\right.
\end{align}
The function $\widetilde{\psi}$ preserves the continuity property of $\psi$. One can see this fact more clearly, by using  that
\begin{align}
i^{-1}(i_1,i_2)=\left(\frac{i_1+\sqrt{i_1^2-4i_2}}{2},\frac{i_1-\sqrt{i_1^2-4i_2}}{2}\right)\in {D(\lambda_1,\lambda_2)}\cup \gamma_1, \quad \text{for all} \quad (i_1,i_2)\in {D(i_1,i_2)}\cup \gamma_2
\end{align}
and the definition of $g(\cdot,\cdot)$. Hence, we have
\begin{align}
\widetilde{\psi}(i_1,i_2)= e^{\frac{k}{2}\log^2 \frac{i_1+\sqrt{i_1^2-4i_2}}{i_1-\sqrt{i_1^2-4i_2}}}& \ \ \text{for all} \ \ \ (i_1,i_2)\in {D(i_1,i_2)}\cup\gamma_2\,
\end{align}
and the continuity of $\widetilde{\psi}(\cdot,\cdot)$ follows.

The function $\widetilde{\psi}(\cdot,\cdot)$ satisfies the condition \eqref{condBES} from  Theorem \ref{ThBES} if and only if  $k\geq \dd\frac{1}{3}$ because for these values of $k$ it  is convex in every convex open domain $\omega\subset {D(i_1,i_2)}\cup\gamma_2$ and it is a continuous function. It is bounded linearly from below by $0$.
On the other hand, from the definition of
$\widetilde\psi$, we have
\begin{align}
\min\left\{\sum\limits_{i=1}^r\mu_i \widetilde{\psi}(x_i)\ : \ x=\sum\limits_{i=1}^r\mu_i x_i,\
\sum\limits_{i=1}^r\mu_i=1,\ 1\leq r<\infty\right\}=1.
\end{align}
\newpage

Hence, we  conclude:
\begin{proposition}
The elastic energy $\widehat{\psi}:[0,\infty)\times \mathbb{R}_+\rightarrow\mathbb{R}_+$ defined by
\begin{align}\label{defpsifin}
\widehat{\psi}(i_1,i_2)=\left\{
\begin{array}{ll}
\dd e^{\frac{k}{2}\log^2 \frac{i_1+\sqrt{i_1^2-4i_2}}{i_1-\sqrt{i_1^2-4i_2}}}& \ \ \text{if} \ \ \ (i_1,i_2)\in {D(i_1,i_2)}\cup\gamma_2,\vspace{2mm}\\
1& \  \ \text{if} \ \ \ (i_1,i_2)\in([0,\infty)\times \mathbb{R}_+)\setminus ({D(i_1,i_2)}\cup\gamma_2)
\end{array}
\right.
\end{align}
is convex if and only if  $k\geq \dd\frac{1}{3}$.
\end{proposition}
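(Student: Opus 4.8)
\textit{Plan.} The strategy is to obtain the convexity of $\widehat\psi$ on $Co\,D(i_1,i_2)=\mathbb{R}_+^2$ as a direct consequence of Proposition~\ref{prop:posdefHessian} and the Busemann--Ewald--Shephard criterion (Theorem~\ref{ThBES}). First I would record that, for $k\geq\frac13$, Proposition~\ref{prop:posdefHessian} together with Lemma~\ref{lem} gives $D^2\psi>0$ on the open set $D(i_1,i_2)$; hence $\psi$ is convex on every convex open subdomain of $D(i_1,i_2)$, and since $\widetilde\psi$ is the continuous extension of $\psi$ to $M:=D(i_1,i_2)\cup\gamma_2$ with $\widetilde\psi\equiv 1$ on $\gamma_2$ and $\widetilde\psi>1$ on $D(i_1,i_2)$, the function $\widetilde\psi$ is convex along every line segment contained in $M$ (along a segment that only touches $\gamma_2$, continuity plus the fact that $\widetilde\psi$ attains its minimum $1$ exactly there rules out a concave kink). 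It then remains to check the two hypotheses of Theorem~\ref{ThBES} and to identify the extension explicitly.

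The linear lower bound is immediate, since $\widetilde\psi\geq 1>0$. For the inequality~\eqref{condBES}, fix $x_1,\dots,x_r\in M$, weights $\mu_i\geq 0$ with $\sum\mu_i=1$, and assume $x:=\sum\mu_i x_i\in M$; one must show $\widetilde\psi(x)\leq\sum\mu_i\widetilde\psi(x_i)$. The geometric facts one uses are that $M$ is exactly the part of $\mathbb{R}_+^2$ lying weakly below the strictly convex parabola $\gamma_2=\{i_2=i_1^2/4\}$, that $\widetilde\psi$ attains its minimum $1$ precisely on $\gamma_2$, and that $\widetilde\psi$ is convex along any segment staying in $M$. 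Since segments (and simplices) spanned by points of $M$ leave $M$ only by crossing above $\gamma_2$, one reduces~\eqref{condBES} to convexity of $\widetilde\psi$ on the sub-pieces that stay in $M$, the contributions of the excursions above $\gamma_2$ being controlled by $\widetilde\psi\geq 1=\widetilde\psi|_{\gamma_2}$. Making this reduction clean for $r\geq 3$, where $\mathrm{conv}(x_1,\dots,x_r)$ can genuinely protrude above $\gamma_2$, is the main technical point; the non-convexity of $M$ is overcome precisely because $\gamma_2$ carries the minimal value of $\widetilde\psi$.

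Granting~\eqref{condBES}, Theorem~\ref{ThBES} produces a convex extension of $\widetilde\psi$ to $Co\,M$, namely $\widehat\psi(x)=\inf\{\sum\mu_i\widetilde\psi(x_i):x=\sum\mu_i x_i,\ x_i\in M\}$. On $M$ the infimum is attained with $r=1$, so $\widehat\psi=\widetilde\psi$ there, which is the first branch of~\eqref{defpsifin}. On the complement $([0,\infty)\times\mathbb{R}_+)\setminus(D(i_1,i_2)\cup\gamma_2)$, the bound $\widetilde\psi\geq 1$ gives $\widehat\psi\geq 1$, while every point with $i_1>0$ lying strictly above $\gamma_2$ is a (two-point) convex combination of points of $\gamma_2$ --- because $Co\,\gamma_2=\{(i_1,i_2):i_1>0,\ i_2\geq i_1^2/4\}$ --- so $\widehat\psi\leq 1$; hence $\widehat\psi\equiv 1$ on the complement, with the value on $\{i_1=0\}$ fixed to $1$ by continuity of the convex extension. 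Thus $\widehat\psi$ is exactly the function~\eqref{defpsifin} and it is convex, which proves the ``if'' part.

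For the converse, suppose $k<\frac13$. By Proposition~\ref{prop:posdefHessian} and its proof --- inspecting $\widehat r(t)=k(t^2-1)\log^2 t-(t^2+1)\log t+(t^2-1)$ near $t=1$, where $\widehat r(1)=\widehat r'(1)=\widehat r''(1)=0$ and $\widehat r'''(1)=4(3k-1)<0$, so $\widehat r<0$ just to the right of $t=1$ --- the Hessian $D^2\psi$ has negative determinant at some point of $D(i_1,i_2)$, hence is indefinite there (alternatively, for $k<\frac14$ the $(1,1)$-entry in~\eqref{devsepconv} becomes negative). Since $\widehat\psi$ restricted to the open set $D(i_1,i_2)$ equals $\psi$, $\widehat\psi$ is then not convex. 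This establishes the equivalence.
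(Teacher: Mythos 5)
Your overall route is the paper's: Proposition \ref{prop:posdefHessian} gives positive definiteness of $D^2_i\psi$ on $D(i_1,i_2)$ for $k\geq\frac13$, the Busemann--Ewald--Shephard criterion (Theorem \ref{ThBES}) is invoked, the extension is identified as $\equiv 1$ above $\gamma_2$ because such points are two-point combinations of points of $\gamma_2$, and the converse comes from the Hessian. Your converse is in fact tighter than the paper's wording: mere failure of \emph{positive} definiteness would not exclude convexity, and your use of $\widehat{r}(1)=\widehat{r}'(1)=\widehat{r}''(1)=0$, $\widehat{r}'''(1)=4(3k-1)<0$ to get $\widehat{r}<0$ just to the right of $t=1$, hence $\det\bigl(D^2_\lambda g-\mathbb{D}^2_\lambda i.[(\nabla_\lambda i)^{-T}\nabla_\lambda g]\bigr)<0$ and therefore, via the congruence in Lemma \ref{lem} (with $\det\nabla_\lambda i=\lambda_1-\lambda_2\neq 0$), an indefinite $D^2_i\psi$, is exactly what the ``only if'' part needs.

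The genuine gap is the one you flag yourself: condition \eqref{condBES} is never actually verified for combinations whose convex hull protrudes above $\gamma_2$ --- and this is not only an $r\geq 3$ issue, since already for $r=2$ the chord between two points of $D(i_1,i_2)\cup\gamma_2$ can cross the region above $\gamma_2$ while the evaluation point lies in one of the two sub-pieces inside; ``the excursions are controlled by $\widetilde\psi\geq 1$'' is the right intuition but not yet an argument (the paper is admittedly just as terse at this spot). A clean way to close it, which removes both the $r\geq 3$ worry and Theorem \ref{ThBES} from the proof, is to show directly that the explicit function \eqref{defpsifin} is convex along every line segment of $[0,\infty)\times\mathbb{R}_+$. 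The set where $\widehat\psi=1$ is the convex region $\{i_2\geq i_1^2/4\}$ (it contains $\gamma_2$ and the line $\{i_1=0\}$), so any segment meets it in one (possibly empty or degenerate) subinterval and meets $D(i_1,i_2)$ in at most two subintervals, on which $\widehat\psi=\psi$ is convex by Proposition \ref{prop:posdefHessian} and continuity. On the left such piece $\psi$ is convex, $\geq 1$, and equal to $1$ at the piece's right endpoint (which lies on $\gamma_2$); a convex function attaining its minimum at its right endpoint is non-increasing, so its slopes there are $\leq 0$; symmetrically the slopes on the right piece are $\geq 0$; with the middle piece constant, the difference quotients are non-decreasing along the whole segment, i.e.\ the restriction is convex. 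Segment-wise convexity on the convex set $[0,\infty)\times\mathbb{R}_+$ is convexity, which gives the ``if'' direction (and yields \eqref{condBES} for every $r$ a posteriori); it also settles the value on $\{i_1=0\}$, which your appeal to ``continuity of the convex extension'' does not quite reach, since $Co\,D(i_1,i_2)=\mathbb{R}_+^2$ does not contain that line.
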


Using the  sum of squared logarithms
inequality given by Theorem \ref{mon-dev}, we deduce:
\begin{proposition}\label{mon-dev}{\rm (The exponentiated sum of squared logarithms
inequality and monotonicity)}\\
The function $F\mapsto e^{k\,\norm{\dev_n \log U}^2},\ k\geq 0$ is separately monotone in $i_1, i_2$ for $n=3$ and monotone in $i_1$ for $n=2$.
\end{proposition}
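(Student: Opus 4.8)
The plan is to deduce this directly from the sum of squared logarithms inequalities, Theorems~\ref{Noging} (for $n=3$) and~\ref{Noging2} (for $n=2$). The key preliminary observation is that, if $\lambda_1,\dots,\lambda_n$ denote the eigenvalues of $U$, then
\begin{align*}
\|\dev_n\log U\|^2=\sum_{i=1}^n\log^2\lambda_i-\frac1n\Bigl(\sum_{i=1}^n\log\lambda_i\Bigr)^2=\sum_{i=1}^n\log^2\lambda_i-\frac1n\log^2(\det U),
\end{align*}
so that on any level set of $\det U$ --- that is, of $i_3$ when $n=3$ and of $i_2$ when $n=2$ --- the quantity $\|\dev_n\log U\|^2$ differs from $\sum_i\log^2\lambda_i$ by a fixed additive constant. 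Since $t\mapsto e^{kt}$ is non-decreasing for $k\ge0$, the claimed monotonicity of $F\mapsto e^{k\|\dev_n\log U\|^2}$ in an invariant (the others being held fixed) will follow once we know that $\sum_i\log^2\lambda_i$ is non-decreasing in that invariant along the corresponding level set, and that is exactly what the sum of squared logarithms inequalities provide once their hypotheses are matched.

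For $n=3$, to prove monotonicity in $i_1$ I would fix $i_2,i_3$ and take two admissible triples $(i_1,i_2,i_3)$ and $(i_1',i_2,i_3)$ (admissible meaning that the cubic $\lambda^3-i_1\lambda^2+i_2\lambda-i_3$, resp.\ with $i_1'$, has three positive roots) with $i_1\le i_1'$; denoting by $\lambda_1,\lambda_2,\lambda_3$ and $\mu_1,\mu_2,\mu_3$ the corresponding eigenvalues, the three hypotheses of Theorem~\ref{Noging} read $\lambda_1+\lambda_2+\lambda_3=i_1\le i_1'=\mu_1+\mu_2+\mu_3$, then $\lambda_1\lambda_2+\lambda_2\lambda_3+\lambda_3\lambda_1=i_2=\mu_1\mu_2+\mu_2\mu_3+\mu_3\mu_1$ (equality, so the required $\le$ holds), and $\lambda_1\lambda_2\lambda_3=i_3=\mu_1\mu_2\mu_3$. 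Hence $\sum_i\log^2\lambda_i\le\sum_i\log^2\mu_i$, and since $\det U=i_3$ is unchanged, $\|\dev_3\log U\|^2$ and therefore $e^{k\|\dev_3\log U\|^2}$ does not decrease. Monotonicity in $i_2$ is obtained the same way: now $i_1,i_3$ are held fixed, so in Theorem~\ref{Noging} the first hypothesis degenerates to the equality $\lambda_1+\lambda_2+\lambda_3=\mu_1+\mu_2+\mu_3$ while the second becomes the genuine inequality $i_2\le i_2'$, and the conclusion is reached identically.

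For $n=2$, I would use that $\|\dev_2\log U\|^2=\log^2\lambda_1+\log^2\lambda_2-\tfrac12\log^2(\lambda_1\lambda_2)=\tfrac12\log^2(\lambda_1/\lambda_2)$ (Lemma~\ref{thm:w2d}), fix $i_2=\det U=\lambda_1\lambda_2$, and take admissible pairs with $i_1=\lambda_1+\lambda_2\le\mu_1+\mu_2=i_1'$ and $\lambda_1\lambda_2=\mu_1\mu_2$; Theorem~\ref{Noging2} then gives $\log^2\lambda_1+\log^2\lambda_2\le\log^2\mu_1+\log^2\mu_2$, so $e^{k\|\dev_2\log U\|^2}$ is non-decreasing in $i_1$. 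No monotonicity in $i_2$ is asserted, consistent with the fact that holding $i_1$ fixed and letting $i_2\nearrow i_1^2/4$ forces $\lambda_1/\lambda_2\to1$ and hence strictly decreases $\|\dev_2\log U\|^2$.

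I do not anticipate a serious obstacle: the entire analytic difficulty is already encapsulated in Theorems~\ref{Noging} and~\ref{Noging2}, whose proofs are cited from~\cite{Neff_log_inequality13}. The only things needing care are the algebraic identity that isolates the constant $\tfrac1n\log^2(\det U)$ (which is precisely why $\det U$ must be the quantity held fixed), and the bookkeeping that along a level set of the other invariant(s) exactly one hypothesis of the sum of squared logarithms inequality stays a strict inequality while the remaining one(s) become equalities. It is also worth stating explicitly that ``non-decreasing in $i_1$'' (resp.\ $i_2$) is meant as the pointwise comparison of two admissible points differing only in that invariant; the segment joining them need not lie in the (non-convex) admissible domain, but this is immaterial since the sum of squared logarithms inequality is itself a pointwise statement.
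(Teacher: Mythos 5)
Your proof is correct and follows essentially the same route as the paper: both rest on the identity $\|\dev_n\log U\|^2=\|\log U\|^2-\tfrac1n\log^2(\det U)$, which isolates a factor depending only on $\det U$, and then invoke the sum of squared logarithms inequalities (Theorems~\ref{Noging} and~\ref{Noging2}) together with the monotonicity of the exponential. The paper's proof even packages the two $n=3$ cases into a single joint statement (varying $i_1$ and $i_2$ simultaneously with $i_3$ fixed), of which your separate-variable bookkeeping is the special case actually asserted in the proposition.
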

\newcommand{\ihat}{\widehat{i}}
\begin{proof}
 In this proof, we will restrict ourselves to the case $n=3$ and show slightly more than seperate monotonicity.
 To this aim, let $i_1=\lambda_1+\lambda_2+\lambda_3$, $i_2=\lambda_1\lambda_2+\lambda_1\lambda_3+\lambda_2\lambda_3$, $i_3=\lambda_1\lambda_2\lambda_3$ and $\lambdahat_1, \lambdahat_2, \lambdahat_3$ analogously corresponding to $\ihat_1$, $\ihat_2$, $\ihat_3$ be given in such a way that $i_1\leq \ihat_1$, $i_2\leq \ihat_2$ and $i_3=\ihat_3$. Then these inequalities coincide with those from \eqref{sumofsquaredlog_condition} and Theorem \ref{Noging} and monotonicity of the exponential function yield $e^{k(\log^2\lam_1+\log^2\lam_2+\log^2\lam_3)}\leq e^{k(\log^2\lambdahat_1+\log^2\lambdahat_2+\log^2\lambdahat_3)}$.
The proof of the proposition follows from the equality
\begin{align*}
 e^{k\,\norm{\dev_n \log U}^2}&=e^{k\,\norm{ \log U}^2-\frac{k}{n}\,\tr[\log
U]^2}=e^{k\,\norm{\log U}^2}\cdot e^{-\frac{k}{n}
\log^2(\det U)},
\end{align*}
where we have shown the monotonicity of the first factor by the sum of squared
logarithms inequality and the second factor is independent of $i_1, i_2$. (And independent of $i_1$ in the analogous proof for $n=2$.)
\end{proof}

This holds for dimensions $n=2$ and $n=3$ and indeed in any dimension, in which the sum of squared logarithms inequality holds, see Conjecture \ref{conjecture_sumofsquaredlog}.
Therefore $\widehat{\psi}$ satisfies the criterion  of Steigmann from Lemma \ref{sc2} and in consequence we have our main result:
\begin{proposition}
The map $W:{\rm GL}^+(2)\rightarrow\mathbb{R}_+$ defined by
$
W(F)=e^{k\,\|{\rm dev}_2\log U\|^2}
$
is polyconvex for  $k\geq\dd\frac{1}{3}$.
\end{proposition}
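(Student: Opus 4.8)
\emph{Proof plan.} The strategy is to invoke Steigmann's planar polyconvexity criterion (Proposition~\ref{sc2}): once the invariant representation of $W$ is seen to be jointly convex (in the Busemann--Ewald--Shephard sense) and non-decreasing in $i_1$, polyconvexity follows immediately. First I would use Lemma~\ref{thm:w2d} to pass from $F$ to its singular values and then to the principal invariants, writing $W(F)=g(\lambda_1,\lambda_2)=e^{\frac{k}{2}(\log\frac{\lambda_1}{\lambda_2})^2}$ and, via $\lambda_{1,2}=\frac{1}{2}(i_1\pm\sqrt{i_1^2-4i_2})$, $W(F)=\psi(i_1,i_2)$ with $\psi$ the function from \eqref{defpsi} on the (non-convex) admissible set ${D(i_1,i_2)}$. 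The problem then splits into verifying the two hypotheses of Proposition~\ref{sc2}.

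For the convexity hypothesis I would argue in two steps. On the open set ${D(i_1,i_2)}$ the Hessian $D_i^2\psi$ is positive definite exactly when $k\geq\frac{1}{3}$; this is Proposition~\ref{prop:posdefHessian}, and it is the step that pins down the threshold $\frac{1}{3}$. Because ${D(i_1,i_2)}$ is not convex, this does not yet give convexity in the sense required by Proposition~\ref{sc2}, so the second step is to pass to the extension $\widehat\psi$ of \eqref{defpsifin}: one extends $\psi$ continuously by the value $1$ onto the boundary curve $\gamma_2$ (where $\lambda_1=\lambda_2$, so $\norm{\dev_2\log U}^2=0$), and then by the constant $1$ to the convex hull $\mathbb{R}_+^2$. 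Using the Busemann--Ewald--Shephard characterization (Theorem~\ref{ThBES}): $\widehat\psi$ is bounded linearly below by $0$; on every convex open subdomain of ${D(i_1,i_2)}\cup\gamma_2$ it is continuous and convex in the classical sense (its Hessian being positive definite there for $k\geq\frac{1}{3}$), hence it satisfies the Jensen-type inequality \eqref{condBES}; and on the complementary part the infimum defining the convex extension equals $1$. This shows $\widehat\psi$ is convex on $\mathbb{R}_+^2$ if and only if $k\geq\frac{1}{3}$, which is precisely hypothesis~i) of Proposition~\ref{sc2}.

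The monotonicity hypothesis is handled by Proposition~\ref{mon-dev} with $n=2$. The key is the factorization
\begin{align*}
e^{k\,\norm{\dev_2\log U}^2}=e^{k\,\norm{\log U}^2}\cdot e^{-\frac{k}{2}\log^2(\det U)},
\end{align*}
in which the second factor depends on $i_2=\det U$ alone, while the first is non-decreasing in $i_1$ by the planar sum-of-squared-logarithms inequality (Theorem~\ref{Noging2}): keeping $i_2=\lambda_1\lambda_2$ fixed and increasing $i_1=\lambda_1+\lambda_2$ can only increase $\log^2\lambda_1+\log^2\lambda_2=\norm{\log U}^2$. Hence $\psi(\cdot,i_2)$ is non-decreasing for each admissible $i_2$, which is hypothesis~ii).

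With both hypotheses verified for $k\geq\frac{1}{3}$, Proposition~\ref{sc2} yields that $W(F)=e^{k\,\norm{\dev_2\log U}^2}$ is polyconvex, which is the claim. I expect the real obstacle to lie entirely in the convexity step: positive definiteness of the Hessian on ${D(i_1,i_2)}$ is not the same as global convexity on its convex hull, and making that passage rigorous needs the Busemann--Ewald--Shephard machinery together with a careful check that gluing in the constant value $1$ across $\gamma_2$ and beyond neither destroys convexity nor pulls the extension below $1$. Once Theorem~\ref{Noging2} is available, the monotonicity step is comparatively mild.
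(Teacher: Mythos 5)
Your proposal is correct and follows essentially the same route as the paper: Lemma \ref{thm:w2d} and the invariant representation, Proposition \ref{prop:posdefHessian} for the Hessian on ${D(i_1,i_2)}$, the Busemann--Ewald--Shephard extension $\widehat\psi$ of \eqref{defpsifin} for convexity on the convex hull, Proposition \ref{mon-dev} (via Theorem \ref{Noging2} and the factorization $e^{k\,\norm{\log U}^2}\cdot e^{-\frac{k}{2}\log^2(\det U)}$) for monotonicity in $i_1$, and finally Steigmann's criterion (Proposition \ref{sc2}). No gaps to report.
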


\subsection{Polyconvexity of the volumetric response $F\mapsto e^{\widehat{k}(\log\det F)^m}$ in arbitrary dimensions}\label{polytr}
 In a previous work \cite[Proposition 5.11]{NeffGhibaLankeit} we have established the following lemma:
\begin{lemma}
The function
$
 t\mapsto e^{\widehat{k}\,(\log t)^m}
$
is convex if and only if
$
 \widehat{k}\geq \tel{m^{(m+1)}}.
$
\end{lemma}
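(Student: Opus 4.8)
The plan is to reduce convexity of $f(t):=e^{\widehat{k}\,(\log t)^{m}}$ on $(0,\infty)$ to a single pointwise inequality and then optimise. The convenient substitution is $t=e^{x}$: a $C^{2}$ function $F$ on $(0,\infty)$ is convex if and only if $H(x):=F(e^{x})$ satisfies $H''(x)\ge H'(x)$ for all $x\in\R$, since $\frac{d^{2}}{dt^{2}}F(t)=e^{-2x}\bigl(H''(x)-H'(x)\bigr)$ and $e^{-2x}>0$. Taking $H(x)=e^{\widehat{k}x^{m}}$ and writing $\beta(x)=\widehat{k}x^{m}$, one has $H''-H'=\bigl(\beta''+(\beta')^{2}-\beta'\bigr)e^{\beta}$, so convexity of $f$ is equivalent to
\begin{align*}
P(x):=\widehat{k}^{\,2}m^{2}x^{2(m-1)}+\widehat{k}\,m(m-1)\,x^{m-2}-\widehat{k}\,m\,x^{m-1}\ \ge\ 0\qquad\text{for all }x\in\R .
\end{align*}
Here $\widehat{k}>0$ may be assumed; for $m=1$ the inequality collapses to $\widehat{k}(\widehat{k}-1)\ge 0$ and the claim is elementary, so I would treat $m\ge 2$.

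The heart of the argument is the case $x>0$. There $P(x)=\widehat{k}\,m\,x^{m-2}\,Q(x)$ with $Q(x):=\widehat{k}\,m\,x^{m}-x+(m-1)$, so $P\ge 0$ on $(0,\infty)$ iff $Q\ge 0$ on $(0,\infty)$. Since $Q''(x)=\widehat{k}\,m^{2}(m-1)x^{m-2}>0$, the function $Q$ is strictly convex on $(0,\infty)$, hence its infimum is the value at the unique stationary point $x_{\ast}=(\widehat{k}\,m^{2})^{-1/(m-1)}$; using $x_{\ast}^{m-1}=(\widehat{k}m^{2})^{-1}$ one computes
\begin{align*}
Q(x_{\ast})=\frac{x_{\ast}}{m}-x_{\ast}+(m-1)=(m-1)\Bigl(1-\frac{x_{\ast}}{m}\Bigr).
\end{align*}
Therefore $Q\ge 0$ on $(0,\infty)$ iff $x_{\ast}\le m$, i.e.\ $(\widehat{k}\,m^{2})^{-1/(m-1)}\le m$, which rearranges to exactly $\widehat{k}\ge 1/m^{m+1}$. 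The same identity delivers the converse at no extra cost: if $\widehat{k}<1/m^{m+1}$ then $P(x_{\ast})<0$, so $f$ fails to be convex near $t=e^{x_{\ast}}$.

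It remains to check that the half-line $x<0$ imposes no further restriction. For $m$ even — the case relevant here, with $m=2$ giving the threshold $\widehat{k}\ge 1/8$ as in the abstract — one has $x^{2(m-1)}>0$, $x^{m-2}>0$, and $x^{m-1}<0$ for $x<0$, so every term of $P(x)$ is nonnegative and $P(x)>0$ automatically; thus the threshold is dictated entirely by the analysis on $(0,\infty)$. The step I expect to require the most care is the extremisation of $Q$: one must justify that $x_{\ast}$ realises the \emph{global} minimum on $(0,\infty)$ — strict convexity together with $Q(0^{+})=m-1>0$ and $Q(x)\to+\infty$ as $x\to\infty$ handle this — and then unwind $x_{\ast}\le m$ carefully into the closed form $\widehat{k}\ge 1/m^{m+1}$, tracking the direction of the inequality when passing through the $(m-1)$-th root and the reciprocal.
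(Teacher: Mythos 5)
Your argument is correct, and it is self-contained, which matters here because the paper does not prove this lemma at all: it simply cites Proposition 5.11 of Part I \cite{NeffGhibaLankeit}. Your reduction via $t=e^{x}$ to the inequality $\beta''+(\beta')^{2}-\beta'\geq 0$ with $\beta(x)=\widehat{k}x^{m}$, the factorization $P(x)=\widehat{k}m\,x^{m-2}Q(x)$ with $Q(x)=\widehat{k}m\,x^{m}-x+(m-1)$ on $x>0$, and the minimization of the strictly convex $Q$ at $x_{*}=(\widehat{k}m^{2})^{-1/(m-1)}$ are all accurate; the identity $Q(x_{*})=(m-1)\bigl(1-\tfrac{x_{*}}{m}\bigr)$ and the unwinding of $x_{*}\leq m$ into $\widehat{k}\geq 1/m^{m+1}$ check out (for $m=2$ this gives the threshold $\widehat{k}\geq 1/8$ actually used in the paper), and the same computation honestly delivers the necessity direction. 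One point to state more explicitly: your treatment of the half-line $x<0$ covers only even $m$ (plus the trivial $m=1$). This is not a removable gap in your argument but a genuine limitation of the statement itself as literally written for all $m\in\N$: for odd $m\geq 3$ the term $-\widehat{k}m\,x^{m-1}$ dominates $P(x)$ for $x$ slightly negative (e.g.\ $m=3$ gives $P(x)\approx 6\widehat{k}x<0$ near $x=0^{-}$), so no choice of $\widehat{k}>0$ makes the function convex on all of $(0,\infty)$, and the ``if'' direction fails there. Since the paper only ever invokes the lemma for $m=2$, your proof establishes everything that is needed; it would strengthen the write-up to say explicitly that the equivalence is proved for even $m$ (and $m=1$) and to note why odd $m\geq 3$ is excluded, rather than describing the even case merely as ``the case relevant here.'' You should also record the standing assumption $\widehat{k}>0$, without which the ``only if'' direction is vacuous (e.g.\ $\widehat{k}=0$ gives a constant, hence convex, function).
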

This implies:
\begin{proposition}\label{propvol} For $m\in\N$ the function
$
 F\mapsto e^{\widehat{k}\,(\log\det F)^m}, \; F\in {\rm GL}^+(n),
$
is polyconvex for $\widehat{k}\geq \tel{m^{(m+1)}}$.
Explicitly evaluating this condition in the case of $m=2$, we arrive at
$\widehat{k}\geq\tel8$.
\end{proposition}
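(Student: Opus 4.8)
The plan is to read off polyconvexity directly from Ball's definition $W(F)=P(F,\Cof F,\det F)$ with $P$ convex, exploiting that $\det F$ occupies one of the slots of $P$ and feeding in the convexity lemma just recalled. The point is that only the $\det$-slot of $P$ will be used, so no convexity in the $F$- or $\Cof F$-arguments and no monotonicity are needed — just convexity of the scalar map $t\mapsto e^{\widehat{k}(\log t)^m}$.

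In detail, I would first fix $h\colon\Rplus\to\R$, $h(t):=e^{\widehat{k}\,(\log t)^m}$, and invoke the preceding lemma: $h$ is convex on $\Rplus$ exactly when $\widehat{k}\geq\tel{m^{(m+1)}}$, which we assume. Since $\det F>0$ for every $F\in\GL^+(n)$, the composition $h(\det F)$ is well defined and coincides with the energy. Setting $P(X,Y,z):=h(z)$, the function $P$ ignores its first two arguments and is convex in $z$ on $\Rplus$, hence jointly convex there; moreover the convex hull of $\{(F,\Cof F,\det F):F\in\GL^+(n)\}$ lies in a region where the last coordinate is strictly positive (a convex combination of numbers in $\Rplus$ is again in $\Rplus$), so $P$ is convex precisely where polyconvexity requires it to be. Thus $W(F)=P(F,\Cof F,\det F)$ with $P$ convex, i.e.\ $W$ is polyconvex. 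Equivalently this is the classical fact that $f\circ\det$ is polyconvex for any convex $f\colon\Rplus\to\R$ \cite{Ball77,Dacorogna08}, valid in every dimension $n$; this is why, unlike the isochoric part treated in Section~\ref{poly-isochoric}, the argument is not confined to $n=2,3$ and does not need the Steigmann criterion of Proposition~\ref{sc1} or~\ref{sc2}. Specializing to $m=2$ gives $m^{(m+1)}=2^{3}=8$, i.e.\ the threshold $\widehat{k}\geq\tel8$.

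I do not expect a genuine obstacle here: the only analytic content — the convexity of $t\mapsto e^{\widehat{k}(\log t)^m}$ — has already been isolated in the lemma (and is exactly the place where exponentiating improves matters, since $t\mapsto(\log t)^m$ itself is not convex on all of $\Rplus$). The one subtlety worth a line of care is the domain of convexity: $h$ is convex only on $\Rplus$, and one must note, as above, that this is harmless because determinants of matrices in $\GL^+(n)$ and the relevant convex hull both remain in $\Rplus$; outside $\Rplus$ one simply sets $P:=+\infty$ (consistent with $W_{\rm eH}=+\infty$ for $\det F\leq 0$), which preserves convexity.
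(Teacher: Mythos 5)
Your argument is correct and is essentially the paper's own: the paper likewise deduces the proposition directly from the lemma that $t\mapsto e^{\widehat{k}\,(\log t)^m}$ is convex for $\widehat{k}\geq \tel{m^{(m+1)}}$, using the standard fact that a convex function of $\det F$ (extended by $+\infty$ for nonpositive argument) is polyconvex in any dimension. Your explicit construction of $P(X,Y,z)=h(z)$ and the check of the threshold $2^3=8$ for $m=2$ just spell out the step the paper leaves implicit.
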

\subsection{The main  polyconvexity statement}

In view of the results established in Subsection \ref{poly-isochoric} and \ref{polytr} we conclude that:

\begin{theorem}\label{mainth} The functions $W_{_{\rm eH}}:\R^{n\times n}\to \R$ from the family of exponentiated Hencky type energies
\begin{align}
 W_{_{\rm eH}}(F)&=W_{\text{\rm iso}}(\frac F{\det F^{\frac{1}{n}}})+W_{\text{\rm vol}}(\det F^{\tel n}\id) = \left\{\begin{array}{lll}
\dd\frac{\mu}{k}\,e^{k\,\|{\rm dev}_n\log U\|^2}+\frac{\kappa}{2\widehat{k}}\,e^{\widehat{k}\,[(\log \det U)]^2}&\text{if}& \det\, F>0,\vspace{2mm}\\
+\infty &\text{if} &\det F\leq 0,
\end{array}\right.
\end{align}
are polyconvex for $n=2$, $\mu>0, \kappa>0$, $k\geq\dd\frac{1}{3}$ and $\widehat{k}\dd\geq \tel8$.
\end{theorem}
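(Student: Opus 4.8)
The plan is to exhibit $W_{_{\rm eH}}$ as a nonnegative linear combination of the two pieces whose polyconvexity has already been secured in Subsections~\ref{poly-isochoric} and~\ref{polytr}, and then to use that polyconvexity is stable under such combinations. Concretely, on ${\rm GL}^+(2)$ one has the decomposition
\begin{align*}
W_{_{\rm eH}}(F) &= \frac{\mu}{k}\, W_{\rm iso}(F) + \frac{\kappa}{2\widehat{k}}\, W_{\rm vol}(F),\\
W_{\rm iso}(F) &= e^{k\,\|{\rm dev}_2\log U\|^2},\qquad W_{\rm vol}(F) = e^{\widehat{k}\,(\log\det F)^2},
\end{align*}
with both coefficients $\tfrac{\mu}{k}>0$ and $\tfrac{\kappa}{2\widehat{k}}>0$ by the hypotheses $\mu,\kappa,k,\widehat{k}>0$ (note that $\|{\rm dev}_2\log U\|^2$ is unchanged under $F\mapsto F/\det F^{1/2}$ and $(\log\det F)^2$ depends on $F$ only through $\det F$, so this is consistent with the first line of the theorem). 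The final Proposition of Subsection~\ref{poly-isochoric} gives that $W_{\rm iso}$ is polyconvex on ${\rm GL}^+(2)$ once $k\geq\tfrac13$, and Proposition~\ref{propvol} with $m=2$ gives that $W_{\rm vol}$ is polyconvex on ${\rm GL}^+(n)$, in particular for $n=2$, once $\widehat{k}\geq\tfrac18$.

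In the planar case this means there exist convex functions $P_{\rm iso},P_{\rm vol}:\R^{2\times 2}\times\R\to\R\cup\{+\infty\}$ with $W_{\rm iso}(F)=P_{\rm iso}(F,\det F)$ and $W_{\rm vol}(F)=P_{\rm vol}(F,\det F)$ for $\det F>0$; moreover both representatives can be chosen to equal $+\infty$ precisely when $\det F\leq 0$. For $W_{\rm vol}$ this is automatic, since $\log\det F$ is undefined for $\det F\le 0$ and the relevant convex extension lives on $\{\det F>0\}$; for $W_{\rm iso}$ it follows from the Steigmann criterion together with the finite convex extension $\widehat{\psi}$ of~\eqref{defpsifin} on $[0,\infty)\times\R_+$, where $i_2=\det U=\det F$. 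Setting $P:=\tfrac{\mu}{k}\,P_{\rm iso}+\tfrac{\kappa}{2\widehat{k}}\,P_{\rm vol}$, a nonnegative linear combination of $[-\infty,+\infty]$-valued convex functions is again convex, so $P$ is convex and $W_{_{\rm eH}}(F)=P(F,\det F)$ holds both for $\det F>0$ and for $\det F\leq 0$ (both sides being $+\infty$). Hence $W_{_{\rm eH}}$ is polyconvex for $n=2$, $\mu,\kappa>0$, $k\geq\tfrac13$, $\widehat{k}\geq\tfrac18$, which is the assertion.

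I expect no genuine obstacle at this stage: the analytic content — the positive definiteness of the Hessian in Proposition~\ref{prop:posdefHessian}, the global convexity via the extension theorem of Busemann, Ewald and Shephard, the monotonicity in $i_1$ coming from the sum-of-squared-logarithms inequality (Theorem~\ref{Noging2}), and the one-dimensional convexity lemma behind Proposition~\ref{propvol} — has all been carried out. The only point meriting a little care is the bookkeeping about effective domains and the $+\infty$ branch: one must check that the convex representatives $P_{\rm iso}$ and $P_{\rm vol}$ can be chosen with compatible effective domains, both collapsing to $+\infty$ exactly on $\{\det F\leq 0\}$, so that their sum is a bona fide convex function on $\R^{2\times 2}\times\R$; this is routine once one observes that $W_{\rm iso}$ depends on $F$ only through $(F,\det F)$ and $W_{\rm vol}$ only through $\det F$.
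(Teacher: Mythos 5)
Your argument is correct and coincides with the paper's own proof: the theorem is obtained there exactly by combining the polyconvexity of the isochoric factor (Subsection on the isochoric exponentiated Hencky energy, valid for $k\geq\tfrac13$) with Proposition on the volumetric term for $m=2$ (valid for $\widehat{k}\geq\tfrac18$), and using that a nonnegative linear combination of polyconvex functions is polyconvex. Your extra care about the convex representatives and the $+\infty$ branch on $\{\det F\leq 0\}$ is a sound (and slightly more explicit) writing-up of the same step.
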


\section{Unconditional coercivity: coercivity for every exponent $1\leq q<\infty$}
\setcounter{equation}{0}

We start the analysis of  coercivity problem by considering the simple one dimensional case:
\begin{align}
e^{(\log t)^2}=(e^{\log t})^{\log t}=t^{\log t}.
\end{align}
Since for some particular choices (see Figure \ref{starting1})
\begin{figure}[h!]
\centering
\includegraphics[scale=0.7]{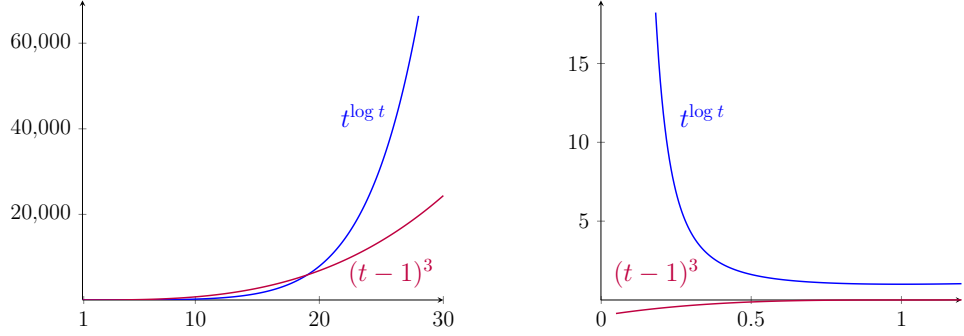}
\centering
\caption{\footnotesize{ The function $e^{(\log t)^2}$ dominates $|t-1|^3$ on $(1,\infty)$ as well as on $(0,1)$.}}
\label{starting1}
\end{figure}%
we see that for large values of $t$, the function $e^{(\log t)^2}$ dominates $|t-1|^\alpha$,
for arbitrary $\alpha>0$. However, $(\log t)^2$ alone does not satisfy any growth condition of this type.  This motivates:
\begin{lemma}{\rm (Unconditional coercivity)}
 For all $\alpha>0$ there exists $K>0$ such that for all $t>0$
 \begin{equation}\label{unucoer}
   e^{(\log t)^2}\geq K\,|t-1|^\alpha.
 \end{equation}
\end{lemma}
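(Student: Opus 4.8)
The plan is to prove the inequality by exploiting the asymptotic growth of $e^{(\log t)^2}$ relative to powers of $|t-1|$, treating separately the three regimes $t$ large, $t$ small, and $t$ near $1$. First I would reduce the two-sided statement to a one-sided one: since $\log(1/t) = -\log t$, the substitution $t\mapsto 1/t$ gives $e^{(\log t)^2} = e^{(\log(1/t))^2}$, while $|1/t - 1|^\alpha = t^{-\alpha}|t-1|^\alpha$. Hence if the estimate $e^{(\log t)^2}\geq K_1|t-1|^\alpha$ holds on $(1,\infty)$ with some constant $K_1$, then on $(0,1)$ we get $e^{(\log t)^2} = e^{(\log(1/t))^2}\geq K_1|1/t-1|^\alpha = K_1 t^{-\alpha}|t-1|^\alpha \geq K_1|t-1|^\alpha$ since $t^{-\alpha}>1$ there. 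So it suffices to work on $[1,\infty)$, and there, only the behavior as $t\to\infty$ can cause trouble (near $t=1$ the left side tends to $1$ while $|t-1|^\alpha\to 0$).

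For $t\geq 2$, say, I would bound $|t-1|^\alpha \leq t^\alpha = e^{\alpha\log t}$ and compare with $e^{(\log t)^2}$; writing $s=\log t\geq \log 2>0$, the ratio is $e^{s^2-\alpha s} = e^{s(s-\alpha)}$, which is bounded below (indeed tends to $\infty$) on $[\log 2,\infty)$, with infimum some positive constant depending only on $\alpha$. This handles the unbounded regime. On the remaining compact interval $[1,2]$, both $t\mapsto e^{(\log t)^2}$ and $t\mapsto |t-1|^\alpha$ are continuous, the former is bounded below by $1>0$ and the latter is bounded above by $1$, so the inequality holds there with $K=1$ (or any $K\leq 1$). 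Taking $K$ to be the minimum of the constants produced in the two regimes (and, via the reduction above, this same $K$ works on $(0,1)$) completes the argument.

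I do not expect any genuine obstacle here: the statement is an elementary estimate, and the only mild subtlety is organizing the case split cleanly and making sure the reduction $t\mapsto 1/t$ is used to avoid redoing the analysis on $(0,1)$. The one point to state carefully is that $e^{s^2-\alpha s}$ attains a positive minimum on $[\log 2,\infty)$ — this follows because $s^2-\alpha s\to+\infty$ and the function is continuous, hence its infimum over the closed set is attained (or one simply notes $s^2-\alpha s \geq -\alpha^2/4$ for all $s$, giving the explicit bound $e^{-\alpha^2/4}$). Using this explicit lower bound one can even exhibit $K = e^{-\alpha^2/4}$ as an admissible constant valid on all of $(0,\infty)$ at once, since $(\log t)^2 - \alpha|\log t| \geq -\alpha^2/4$ and $|t-1|^\alpha \leq e^{\alpha|\log t|}$ for all $t>0$ — which would give a cleaner, case-free proof, and that is the version I would ultimately write down.
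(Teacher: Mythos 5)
Your proposal is correct and follows essentially the same route as the paper: the paper also substitutes $s=\log t$, bounds the denominator via $(e^s-1)^\alpha\leq e^{\alpha s}$ on $(1,\infty)$ (your $|t-1|^\alpha\leq e^{\alpha|\log t|}$), observes $\inf_{s>0}(s^2-\alpha s)>-\infty$, and treats $(0,1)$ trivially before taking the minimum of the two constants. Your final case-free variant with the explicit constant $K=e^{-\alpha^2/4}$ is a mild streamlining of the same estimate, not a different argument.
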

\begin{proof}
 First we consider ``large'' values of $t$, use the substitution $s=\log t$ and observe that
\[
 \widehat K=\inf_{t>1}\frac{e^{\log^2 t}}{(t-1)^\alpha}=\inf_{s>0}\frac{e^{s^2}}{(e^s-1)^\alpha}\geq \inf_{s>0} \frac{e^{s^2}}{e^{\alpha s}}
 =\inf_{s>0} e^{s^2-\alpha s}
\]
is positive,
because $\inf\limits_{s>0}( s^2-\alpha s)>-\infty$. Secondly,
$
 \inf_{t\in(0,1)}\frac{e^{\log^2t}}{(1-t)^\alpha}
 = 1>0.
$
Hence the claim follows upon the choice $K=\min\set{\widehat K,1}$.
\end{proof}

\begin{cor}
\label{thm:coerccor}
For all $\alpha, \beta>0$ there is $K>0$ such that for all $t>0$
  \[
e^{\beta(\log t)^2}\geq K\,|t-1|^{\alpha\beta}.
  \]
\end{cor}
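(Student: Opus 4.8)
The plan is to obtain the corollary as an immediate consequence of the preceding Lemma on unconditional coercivity, using the identity $e^{\beta(\log t)^2}=\bigl(e^{(\log t)^2}\bigr)^{\beta}$ together with the monotonicity of the map $x\mapsto x^{\beta}$ on $[0,\infty)$ for $\beta>0$.

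Concretely, I would first invoke inequality \eqref{unucoer} with the exponent $\alpha$ that appears in the corollary: there is a constant $K_0>0$ with $e^{(\log t)^2}\geq K_0\,|t-1|^{\alpha}$ for all $t>0$. Both sides are nonnegative, so raising them to the power $\beta>0$ preserves the inequality, giving
\[
e^{\beta(\log t)^2}=\bigl(e^{(\log t)^2}\bigr)^{\beta}\geq K_0^{\beta}\,|t-1|^{\alpha\beta}\qquad\text{for all }t>0,
\]
and the choice $K:=K_0^{\beta}>0$ finishes the proof.

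An alternative, self-contained route would substitute $s=t^{\sqrt{\beta}}$ (so that $\beta(\log t)^2=(\log s)^2$) and then rerun the two infimum estimates from the Lemma's proof — splitting into $t>1$ and $t\in(0,1)$ and using $\inf_{s>0}(s^2-cs)>-\infty$ — but this merely reproves the Lemma and is strictly longer, so I would not pursue it.

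As for the main obstacle: there is essentially none. The only point deserving a line of care is that exponentiation by a positive real preserves the order between nonnegative reals, so that the correct constant is $K_0^{\beta}$ rather than $K_0$; no new estimate beyond the Lemma is required.
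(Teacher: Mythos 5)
Your argument is correct and is precisely the intended route: the paper states the corollary without proof as an immediate consequence of the unconditional coercivity lemma \eqref{unucoer}, and your step of raising the lemma's inequality (with the same $\alpha$) to the power $\beta>0$, using monotonicity of $x\mapsto x^{\beta}$ on $[0,\infty)$ and taking $K=K_0^{\beta}$, is exactly the one-line deduction that is being suppressed.
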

\begin{lemma}
 \label{thm:lemmataylor}
 Let $a>0$ and $\gamma>0$. Then there exist  positive constants $C,\Ktilde$ such that for all $t\in(0,a)$ and for all $s>1$:
\[
 C\sqrt{s+t}^{\,\gamma} -\Ktilde\leq \sqrt{s}^{\,\gamma}.
\]
\end{lemma}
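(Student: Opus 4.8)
The plan is to reduce the two–variable estimate to a one–variable one and then invoke an elementary convexity/subadditivity bound for powers. First I would write $\sqrt{x}^{\,\gamma}=x^{\gamma/2}$ throughout and abbreviate $p:=\gamma/2>0$. Since $t\mapsto (s+t)^{p}$ is increasing and the constant $C$ will be taken positive, we have $C(s+t)^{p}\le C(s+a)^{p}$ for every $t\in(0,a)$; hence it suffices to prove
\[
C\,(s+a)^{p}-\Ktilde\le s^{p}\qquad\text{for all }s>1
\]
(indeed the argument works for all $s\ge 0$, so the hypothesis $s>1$ is not needed for this lemma — it is presumably present because that is the range used in the subsequent coercivity/existence argument).

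The key ingredient is the standard inequality: with $c_{p}:=\max\{1,2^{\,p-1}\}\ge 1$ one has $(x+y)^{p}\le c_{p}\,(x^{p}+y^{p})$ for all $x,y\ge 0$, which is just subadditivity of $r\mapsto r^{p}$ when $p\le 1$ and convexity of $r\mapsto r^{p}$ (i.e.\ $(\tfrac{x+y}{2})^{p}\le\tfrac12(x^{p}+y^{p})$) when $p\ge 1$. Applying it with $x=s$, $y=a$ gives $(s+a)^{p}\le c_{p}\,s^{p}+c_{p}\,a^{p}$, so the choice
\[
C:=\frac{1}{c_{p}},\qquad \Ktilde:=a^{p}=a^{\gamma/2}
\]
works: $C(s+a)^{p}=c_{p}^{-1}(s+a)^{p}\le s^{p}+a^{p}=s^{p}+\Ktilde$. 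Both constants are strictly positive since $c_{p}$ is finite and $a>0$, $\gamma>0$.

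I do not expect a genuine obstacle here — the statement is an elementary growth estimate. The only point worth flagging is that for $\gamma>2$ the leading constant $C$ must be taken strictly less than $1$: in that regime $(s+a)^{\gamma/2}-s^{\gamma/2}\to\infty$ as $s\to\infty$, so no additive constant $\Ktilde$ could absorb the difference if $C=1$, and shrinking the leading term by the factor $c_{p}^{-1}<1$ is exactly what makes the remainder bounded. As an alternative to the explicit computation one could fix any $C\in(0,1)$ and note that $s\mapsto C(s+a)^{\gamma/2}-s^{\gamma/2}=s^{\gamma/2}\bigl(C(1+a/s)^{\gamma/2}-1\bigr)\to-\infty$, so this continuous function is bounded above on $[1,\infty)$ and its supremum (or $1$, if that supremum is non-positive) serves as $\Ktilde$.
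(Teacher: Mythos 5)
Your proof is correct, and it takes a genuinely different (and more elementary) route than the paper. The paper proves the lemma by Taylor-expanding $\sqrt{s+t}^{\,\gamma}=(s+t)^{\gamma/2}$ in $t$ around $s$ up to order $m$, where $m$ is chosen so that $\gamma-2m>0\geq\gamma-2m-2$; it then uses the hypothesis $s>1$ to bound every intermediate power $\sqrt{s}^{\,\gamma-2j}$ ($j\le m$) by $\sqrt{s}^{\,\gamma}$ and the Lagrange remainder factor $\sqrt{s+\xi}^{\,\gamma-2m-2}$ by $1$, and finally substitutes $t<a$ to read off $C$ and $\widetilde{K}$ from the resulting coefficients. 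You instead first discard the $t$-dependence by monotonicity ($t\in(0,a)$ gives $(s+t)^{p}\le(s+a)^{p}$ with $p=\gamma/2$) and then apply the standard inequality $(x+y)^{p}\le\max\{1,2^{p-1}\}(x^{p}+y^{p})$, obtaining the explicit admissible constants $C=1/\max\{1,2^{p-1}\}$ and $\widetilde{K}=a^{p}$. Your argument is shorter, avoids the bookkeeping of Taylor coefficients, and in fact holds for all $s\ge0$, so the hypothesis $s>1$ is not needed (you correctly note it is inherited from the application in Lemma \ref{thm:lemmacoercive}); your closing remark that $C<1$ is unavoidable when $\gamma>2$ is also accurate and matches the structure of the paper's constants. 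What the paper's Taylor-based proof buys in exchange is a constant $C$ that tends to $1$ as $a\searrow0$ (the loss enters only through the $t$-dependent terms), but since Lemma \ref{thm:lemmacoercive} and Theorem \ref{thuncoe} only require the existence of some positive constants, this sharper dependence is never used, and your constants serve the subsequent coercivity argument equally well.
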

\begin{proof}
 Choose $m\nat_0$ such that $\gamma-2m>0\geq\gamma-2m-2$. Let $t\in(0,a)$ and $s>1$.
 Taylor's expansion shows the existence of some $\xi\in(0,t)\subset(0,a)$ such that
\begin{align*}
 \sqrt{s+t}^{\,\gamma}&=\sqrt{s}^{\,\gamma}+{\,\frac\gamma2}
 \sqrt{s}^{{\,\gamma}-2}t+\frac{{\,\gamma}({\,\gamma}-2)}{2^2\cdot2!}\sqrt{s}^{{
\, \gamma }
-4}t^2+\ldots+\frac{{\,\gamma}({\,\gamma}-2)\cdots({\,\gamma}-2m+2)}{2^m m!}
\sqrt { s } ^ {{\,\gamma}-2m}t^m\\
 &\qquad
+\frac{{\,\gamma}({\,\gamma}-2)\cdots({\,\gamma}-2m)}{2^{m+1}(m+1)!}\sqrt{s+\xi}
^ { { \, \gamma}-2m-2}t^{m+1}\\
 &\leq\left[1+t{\,\frac\gamma2}+t^2\frac{{\,\gamma}({\,\gamma}-2)}{2^2\cdot 2!}
+\ldots+t^m\,\frac{{\,\gamma}({\,\gamma}-2)\cdots({\,\gamma}-2m+2)}{2^m
m!}\right]\sqrt{s}^{\,\gamma}\\
 &\qquad+\frac{{\,\gamma}({\,\gamma}-2)\cdots({\,\gamma}-2m)}{2^{m+1}(m+1)!}\,t^{
m+1 }\,
\underbrace{\sqrt{s}^{{\,\gamma}-2m-2}}_{\leq 1}.
\end{align*}
and the lemma follows upon the choices of
\begin{align*}
 \frac1C:&=\left[1+a{\,\frac\gamma2}+a^2\frac{{\,\gamma}({\,\gamma}-2)}{2^2\cdot
2!}
+\ldots+a^m\frac{{\,\gamma}({\,\gamma}-2)\cdots({\,\gamma}-2m+2)}{2^m
m!}\right]\,,\
 \frac{\Ktilde}{C}:=\frac{{\,\gamma}({\,\gamma}-2)\cdots({\,\gamma}-2m)}{2^{m+1
} (m+1)!}\,a^{m+1 }.
  \qedhere
\end{align*}
\end{proof}

Moreover, from Lemma \ref{thm:lemmacoercive} we see that there cannot be
any polynomial upper bound $C(1+\|F\|^q)\geq W(F)$.
\begin{lemma}
\label{thm:lemmacoercive}
Let $\alpha, \beta>0$. Then there are constants $K_1, K_2>0$ such that for all
$\lam_1$, $\lam_2\in\Rplus$
\[ e^{\beta(\log^2\lam_1+\log^2\lam_2)}\geq
K_1\Big((\lam_1-1)^2+(\lam_2-1)^2\Big)^\frac{\alpha\beta}{2}-K_2\]
\end{lemma}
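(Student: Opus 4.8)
The plan is to reduce the two-variable statement of Lemma \ref{thm:lemmacoercive} to the one-variable estimate already available from Corollary \ref{thm:coerccor}, combined with the Taylor-type comparison of Lemma \ref{thm:lemmataylor}. The starting point is the elementary splitting $e^{\beta(\log^2\lam_1+\log^2\lam_2)}=e^{\beta\log^2\lam_1}\cdot e^{\beta\log^2\lam_2}$, so that by Corollary \ref{thm:coerccor} there is a constant $K>0$ with
\[
e^{\beta(\log^2\lam_1+\log^2\lam_2)}\geq K^2\,|\lam_1-1|^{\alpha\beta}\,|\lam_2-1|^{\alpha\beta}.
\]
This is already a lower bound, but it is not quite of the desired additive form $K_1\big((\lam_1-1)^2+(\lam_2-1)^2\big)^{\alpha\beta/2}-K_2$: a product of powers can be small when one factor is small even if the other is large, whereas the right-hand side blows up as soon as either $\lam_i\to\infty$. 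So the real content is to upgrade "product" control to "sum" control.

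The way I would do this is to exploit that we are free to lower $\alpha$ (and hence $\alpha\beta$): if the estimate holds for some exponent it holds a fortiori for any smaller one, so without loss of generality we may take $\gamma:=\alpha\beta$ as large as we like, in particular $\gamma>2$. Now fix $\lam_1,\lam_2>0$ and set $s=\max\{(\lam_1-1)^2,(\lam_2-1)^2\}$, $t=\min\{(\lam_1-1)^2,(\lam_2-1)^2\}$, so that $(\lam_1-1)^2+(\lam_2-1)^2=s+t\le 2s$ and hence $\big((\lam_1-1)^2+(\lam_2-1)^2\big)^{\gamma/2}\le 2^{\gamma/2}\,s^{\gamma/2}$. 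It therefore suffices to dominate $s^{\gamma/2}$, up to an additive constant, by $e^{\beta(\log^2\lam_1+\log^2\lam_2)}$. Split into two regimes. If $s\le 1$, then $(\lam_1-1)^2+(\lam_2-1)^2\le 2$ and the left-hand side $e^{\beta(\log^2\lam_1+\log^2\lam_2)}\ge 1$, so the inequality holds trivially after absorbing the bounded right-hand side into $K_2$. If $s>1$, say $s=(\lam_1-1)^2$ (the other case being symmetric), then $|\lam_1-1|>1$, so $\lam_1>2$ or $\lam_1<0$; since $\lam_1>0$ we have $\lam_1>2$, and Corollary \ref{thm:coerccor} applied to the single factor $e^{\beta\log^2\lam_1}$ gives $e^{\beta\log^2\lam_1}\ge K\,|\lam_1-1|^{2\alpha'\beta}$ for whatever exponent we wish; choosing the exponent to be $\gamma=\alpha\beta$ we get directly $e^{\beta(\log^2\lam_1+\log^2\lam_2)}\ge e^{\beta\log^2\lam_1}\ge K\,|\lam_1-1|^{\gamma}=K\,s^{\gamma/2}$, because the dropped factor $e^{\beta\log^2\lam_2}\ge 1$. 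Combining the two regimes and setting $K_1=2^{-\gamma/2}K$, $K_2=2^{\gamma/2}$ (or whatever explicit bound the trivial case produces) finishes the proof.

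The main obstacle, such as it is, is purely bookkeeping: making sure the exponents match up after the reductions and that the "small $s$" regime is handled so that the additive constant $K_2$ genuinely absorbs the bounded contribution — this is where one must be careful that $(\lam_1-1)^2+(\lam_2-1)^2$ is bounded precisely on the region where the exponential term is only known to be $\ge 1$. Note that Lemma \ref{thm:lemmataylor} is not strictly needed for this clean argument; it would enter if one instead wanted to compare $\sqrt{s+t}^{\,\gamma}$ with $\sqrt{s}^{\,\gamma}$ directly rather than via the crude bound $s+t\le 2s$, and I would keep it in reserve in case the $2^{\gamma/2}$ constant is considered unsatisfactory or a sharper dependence is wanted. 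Either way there is no analytic difficulty; the statement is genuinely a soft consequence of Corollary \ref{thm:coerccor} together with the fact that one of the two squared deviations always dominates the sum up to a factor of two.
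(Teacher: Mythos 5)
Your argument is correct, and it takes a genuinely different (and simpler) route than the paper. The paper splits $\R_+^2$ into three regions relative to the threshold $3$: for $\lam_1,\lam_2\in(0,3]$ it absorbs the bounded right-hand side into $K_2$ by compactness; for $\lam_1,\lam_2\in[3,\infty)$ it applies Corollary \ref{thm:coerccor} to \emph{both} factors and converts the resulting product bound via $(\lam_1-1)^2(\lam_2-1)^2\geq(\lam_1-1)^2+(\lam_2-1)^2$; and in the mixed case it applies the corollary to the large variable only and then invokes the Taylor-expansion comparison of Lemma \ref{thm:lemmataylor} (with $s=(\lam_1-1)^2$, $t=(\lam_2-1)^2$, $a=4$) to trade $(\lam_1-1)^{\alpha\beta}$ for $\bigl((\lam_1-1)^2+(\lam_2-1)^2\bigr)^{\alpha\beta/2}$ minus a constant. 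You instead bound the sum by twice the maximal squared deviation, apply Corollary \ref{thm:coerccor} only to the variable realizing that maximum, and discard the other exponential factor as $\geq 1$; this bypasses Lemma \ref{thm:lemmataylor} and the product trick entirely, and in fact (since the corollary holds for \emph{all} $t>0$) it yields the inequality with $K_2=0$ and $K_1=K\,2^{-\alpha\beta/2}$, so your case split $s\leq 1$ versus $s>1$, the observation $\lam_1>2$, and the preliminary reduction to large $\gamma=\alpha\beta$ are all dispensable (they are harmless, but none is actually used). What your route buys is economy and immediate generalizability: the bound $\sum_{i=1}^n(\lam_i-1)^2\leq n\max_i(\lam_i-1)^2$ gives the $n$-dimensional statement verbatim, whereas the paper's Remark \ref{thm:remcoerchigherdim} must adjust the separating number and apply Lemma \ref{thm:lemmataylor} several times; what the paper's route buys is essentially only the explicit region-by-region constants and a use for the auxiliary Taylor lemma, neither of which is needed for the coercivity statement itself.
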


\begin{proof}
 For $\lam_1,\lam_2\in(0,3]$,
 $
  e^{\beta(\log^2\lam_1+\log^2\lam_2)}\geq 0
$
 and the claim follows, even for arbitrary large $K_1$, by setting
 \[
  K_2:=\sup_{(\lam_1,\lam_2)\in[0,3]\times[0,3]}\left\{ K_1((\lam_1-1)^2+(\lam_2-1)^2)^{\frac{\alpha\beta}2}\right\},
 \]
 which is finite by continuity of $\lam\mapsto K_1((\lam_1-1)^2+(\lam_2-1)^2)^{\frac{\alpha\beta}2}$ and compactness of $[0,3]\times[0,3]$.

 For $\lam_1,\lam_2\in[3,\infty)$ note that
\begin{align*}
 (\lam_1-1)^2(\lam_2-1)^2&=(\lam_1-1)^2\frac{(\lam_2-1)^2}2+\frac{(\lam_1-1)^2}2(\lam_2-1)^2\geq (\lam_1-1)^2+(\lam_2-1)^2
\end{align*}
and hence
\begin{equation}
 \label{eq:coercprodukt}
 [(\lam_1-1)(\lam_2-1)]^{\alpha\beta}\geq ((\lam_1-1)^2+(\lam_2-1)^2)^{\frac{\alpha\beta}2}.
\end{equation}
Using Corollary \ref{thm:coerccor}, we obtain $K>0$ fulfilling
\begin{align*}
 e^{\beta(\log^2\lam_1+\log^2\lam_2)}&=e^{\beta\log^2\lam_1}e^{\beta\log^2\lam_2}\geq K^2(\lam_1-1)^{\al\beta}(\lam_2-1)^{\al\beta}\overset{\eqref{eq:coercprodukt}}{\geq} K^2 ((\lam_1-1)^2+(\lam_2-1)^2)^{\frac{\al\beta}2}.
\end{align*}
Now let us consider the last possible case:  $\lam_1\geq 3,\ \lam_2\in(0,3)$.
Then Corollary \ref{thm:coerccor} and Lemma \ref{thm:lemmataylor} with
$s=(\lam_1-1)^2$, $\gamma=\alpha\beta$, $t=(\lam_2-1)^2$ and $a=4$ yield $K,C,\Ktilde>0$ such that
\begin{equation*}
 e^{\beta\log^2\lam_1+\beta\log^2\lam_2}\geq K(\lam_1-1)^{\al\beta} \underbrace{ e^{\beta \log^2\lam_2} }_{\geq 1}
\geq KC((\lam_1-1)^2+(\lam_2-1)^2)^{\frac{\al\beta}{2}} - K\Ktilde.
\end{equation*}
 Finally, we choose the smallest $K_1$ and largest $K_2$ required by any of these individual cases.
\end{proof}

\begin{remark}
 \label{thm:remcoerchigherdim}
 The same can be done in dimension $n=3$ or higher. For larger $n$, the domain must be split into the single cases in a different way, replacing $3$ as the separating number (for $n=2$, indeed $\sqrt{2}+1$ would have sufficed) and Lemma \ref{thm:lemmataylor} must be applied several times.
\end{remark}

\begin{theorem}
 Regardless of dimension $n\nat$ and $\beta>0$,
$
  e^{\beta\norm{\log U}^2}
$
 is {\bf unconditionally coercive}, in the sense that for arbitrary $\al>0$ there are constants $K_1,K_2>0$ such that
 \begin{equation}
  \label{eq:coerc}
  e^{\beta\norm{\log U}^2}\geq K_1\norm{U-\id}^{\alpha\beta}-K_2.
 \end{equation}
\end{theorem}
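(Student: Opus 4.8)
The plan is to reduce the statement to the eigenvalues of $U$ and then to split $\PSym(n)$ into two regions according to the size of the largest principal stretch. Write $\lambda_1\geq\lambda_2\geq\dots\geq\lambda_n>0$ for the eigenvalues of $U\in\PSym(n)$; by the spectral theorem $\norm{\log U}^2=\sum_{i=1}^n(\log\lambda_i)^2$ and $\norm{U-\id}^2=\sum_{i=1}^n(\lambda_i-1)^2$. For the given $\al,\beta>0$, Corollary \ref{thm:coerccor} provides a constant $K>0$ with $e^{\beta(\log t)^2}\geq K\,|t-1|^{\al\beta}$ for all $t>0$; this is the only nontrivial ingredient.

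On the region $\{\lambda_1\geq 2\}$ I would argue as follows. Since $\norm{\log U}^2\geq(\log\lambda_1)^2$, the Corollary gives $e^{\beta\norm{\log U}^2}\geq e^{\beta(\log\lambda_1)^2}\geq K\,(\lambda_1-1)^{\al\beta}\geq K\,2^{-\al\beta}\,\lambda_1^{\al\beta}$, where the last step uses $\lambda_1-1\geq\tfrac12\lambda_1$. On the other hand $|\lambda_i-1|\leq\lambda_1$ for every $i$ (obvious when $\lambda_i\geq 1$, since then $|\lambda_i-1|=\lambda_i-1\leq\lambda_i\leq\lambda_1$; and when $\lambda_i<1$ we have $|\lambda_i-1|<1\leq\lambda_1$), so $\norm{U-\id}\leq\sqrt n\,\lambda_1$ and hence $\norm{U-\id}^{\al\beta}\leq n^{\al\beta/2}\lambda_1^{\al\beta}$. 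Combining the two estimates yields $e^{\beta\norm{\log U}^2}\geq K\,2^{-\al\beta}n^{-\al\beta/2}\,\norm{U-\id}^{\al\beta}$ on this region, i.e. \eqref{eq:coerc} with $K_1:=K\,2^{-\al\beta}n^{-\al\beta/2}$ and no subtracted constant.

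On the complementary region $\{\lambda_1<2\}$ every eigenvalue lies in $(0,2)$, so $|\lambda_i-1|<1$ for all $i$ and therefore $\norm{U-\id}^{\al\beta}<n^{\al\beta/2}$ is bounded, while $e^{\beta\norm{\log U}^2}\geq 1$. Consequently $K_1\,\norm{U-\id}^{\al\beta}-K_1 n^{\al\beta/2}<0\leq e^{\beta\norm{\log U}^2}$, so \eqref{eq:coerc} holds here with the same $K_1$ and $K_2:=K_1 n^{\al\beta/2}$. With these choices \eqref{eq:coerc} is valid on all of $\PSym(n)$, which proves the theorem.

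This argument presents essentially no obstacle once Corollary \ref{thm:coerccor} is available; the only point deserving attention is that the ``small-stretch'' region $\{\lambda_1<2\}$ is not compact ($\lambda_n$ may tend to $0$), yet $\norm{U-\id}$ stays bounded there, so the additive constant $K_2$ alone dominates the right-hand side. An alternative route, presumably closer to the $n=2$ computation of Lemma \ref{thm:lemmacoercive} and the indication in Remark \ref{thm:remcoerchigherdim}, is to split $\Rplus^n$ according to which $\lambda_i$ exceed a large threshold $M$, to estimate the product of the ``large'' factors $e^{\beta\log^2\lambda_i}\geq K|\lambda_i-1|^{\al\beta}$ from below by an elementary inequality of the type \eqref{eq:coercprodukt}, and then to absorb the finitely many bounded ``small'' factors one at a time via Lemma \ref{thm:lemmataylor}; this is more laborious but avoids introducing the explicit threshold $2$.
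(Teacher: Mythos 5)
Your proof is correct, and it takes a genuinely different and in fact leaner route than the paper. The paper also diagonalizes $U$, but then delegates the scalar estimate to Lemma \ref{thm:lemmacoercive}, whose proof for $n=2$ distinguishes three cases according to which eigenvalues exceed the threshold $3$, uses the product inequality \eqref{eq:coercprodukt} when both are large and the Taylor-expansion Lemma \ref{thm:lemmataylor} in the mixed case, and only indicates in Remark \ref{thm:remcoerchigherdim} how this case analysis would proliferate for $n\geq 3$. You instead split $\PSym(n)$ solely according to the largest eigenvalue $\lambda_1$: on $\{\lambda_1\geq 2\}$ you keep only the single factor $e^{\beta(\log\lambda_1)^2}\geq K(\lambda_1-1)^{\al\beta}\geq K2^{-\al\beta}\lambda_1^{\al\beta}$ from Corollary \ref{thm:coerccor} and observe that $\norm{U-\id}\leq\sqrt{n}\,\lambda_1$, while on $\{\lambda_1<2\}$ the right-hand side of \eqref{eq:coerc} is negative once $K_2\geq K_1 n^{\al\beta/2}$, since $\norm{U-\id}<\sqrt n$ there even though this region is not compact. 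All steps check out (the inequality $\lambda_1-1\geq\lambda_1/2$ is exactly where $\lambda_1\geq 2$ enters, and $|\lambda_i-1|\leq\lambda_1$ uses $\lambda_1\geq1$). What your argument buys is that Corollary \ref{thm:coerccor} is the only ingredient, Lemma \ref{thm:lemmataylor} and \eqref{eq:coercprodukt} become unnecessary, and the proof is uniform in the dimension $n$, so Remark \ref{thm:remcoerchigherdim} would be superfluous; what the paper's eigenvalue-by-eigenvalue analysis buys is merely a somewhat sharper lower bound (it exploits every factor $e^{\beta\log^2\lambda_i}$ rather than discarding all but the largest), which is irrelevant for the statement as formulated. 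Your closing sketch of the threshold-$M$ variant is indeed essentially the paper's strategy, and your assessment that it is more laborious is accurate.
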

\begin{proof}
 Unitarily diagonalizing the symmetric positive definite matrix $U$ equivalently transforms equation \eqref{eq:coerc} into
 \[
  e^{\beta\sum_{i=1}^n\log^2\lam_i}\geq K_1\left(\sum_{i=1}^n
(\lam_i-1)^2\right)^{\frac{\alpha\beta}2}-K_2.
 \]
 Here we can apply Lemma \ref{thm:lemmacoercive} (or Remark \ref{thm:remcoerchigherdim} for $n>2$).
\end{proof}
\begin{remark} Let $n=2, k>0$ and consider
 $
  e^{k\,\norm{\dev_2 \log U}^2}.
$
  This energy is not coercive in the following sense: neither are there constants $K_1,K_2,\alpha >0$ such that
 \begin{equation}
  \label{eq:devuncoercive1}
  e^{k\,\norm{\dev_2 \log U}^2}\geq K_1\norm{U-\id}^{\alpha k}-K_2,
 \end{equation}
 nor do there exist $K_1,K_2, \alpha>0$ such that
 \begin{equation}
  \label{eq:devuncoercive2}
  e^{k\norm{\dev_2 \log U}^2}\geq K_1\norm{\dev_2 U}^{\alpha k}-K_2.
 \end{equation}
\end{remark}
\begin{proof}
 Suppose there were $K_1, K_2, \alpha$ satisfying \eqref{eq:devuncoercive1}, i.e. for all $\lam_1,\lam_2>0$:
 \[
  e^{\frac k2\,\log^2\frac{\lam_1}{\lam_2}}\geq K_1((\lam_1-1)^2+(\lam_2-1)^2)^{\frac {\alpha k}{2}}-K_2.
 \]
 Choose $\lam_1=\lam_2=N+1$. This would lead to
$
  1\geq K_1(2N^2)^{\frac {\alpha k}{2}}-K_2\to \infty.
$
 In the same manner, \eqref{eq:devuncoercive2} corresponds to
 \[
  e^{\frac k2\,\log^2\frac{\lam_1}{\lam_2}}\geq \frac{K_1}{2^\frac {\alpha k}{2}} |\lam_1-\lam_2|^{\alpha k}-K_2\, .
 \]
 Choose $\lam_2=\frac{\lam_1}2=N$ to obtain a contradiction by
$
  e^{\frac k2\log^22}\geq \frac{K_1}{2^\frac {\alpha k}{2}} N^{\alpha k}-K_2\to \infty.\qedhere
$
\end{proof}

However, we have the following results which will finally lead to the coercivity of $W(U)$:
\begin{lemma}\label{codevtr}
 Assume $\mu>0,\ \kappa>0$. For arbitrary dimension $n\nat$ and $k,\widehat{k}>0$,
 and for arbitrary $\al_1,\al_2>0$ there are constants $C_1,C_2,C_3>0$ such that for any $U\in PSym(n)$
 \begin{equation}
  \label{eq:coerc-dev}
 \widehat{W}_{_{\rm eH}}(U)= \frac{\mu}{k}\,e^{k\,\|\dev_n\log U\|^2}+\frac{\kappa}{2\widehat{k}}\,e^{\widehat{k}\,|{\rm tr}(\log U)|^2}\geq C_1\left\|\frac{U}{\det U^{1/n}}-\id\right\|^{\alpha_1k}+C_2|\det U-1|^{\alpha_2\widehat{k}}-C_3.
 \end{equation}
\end{lemma}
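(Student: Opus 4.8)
The plan is to exploit the volumetric--isochoric split of $\widehat W_{_{\rm eH}}$ and to control the two summands separately by the coercivity results already established above. The only preparatory step is the identity
\begin{align*}
 \|\dev_n\log U\|^2=\Bigl\|\log\tfrac{U}{\det U^{1/n}}\Bigr\|^2=\|\log\overline U\|^2,\qquad \overline U:=\tfrac{U}{\det U^{1/n}}\in\mathrm{PSym}(n),\quad \det\overline U=1,
\end{align*}
which holds because $\tr(\log U)=\log\det U$ and hence $\dev_n\log U=\log U-\tfrac1n\log\det U\cdot\id=\log\overline U$; by the same token the volumetric exponent is exactly $\widehat k\,(\log\det U)^2$.

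For the isochoric term I would apply the unconditional coercivity theorem, i.e.\ inequality \eqref{eq:coerc}, to the positive definite symmetric matrix $\overline U$ with parameters $\beta=k$ and $\alpha=\alpha_1$, obtaining constants $K_1,K_2>0$ with
\begin{align*}
 e^{k\,\|\dev_n\log U\|^2}=e^{k\,\|\log\overline U\|^2}\geq K_1\,\|\overline U-\id\|^{\alpha_1k}-K_2=K_1\,\Bigl\|\tfrac{U}{\det U^{1/n}}-\id\Bigr\|^{\alpha_1k}-K_2.
\end{align*}
For the volumetric term I would invoke Corollary \ref{thm:coerccor} with $t=\det U>0$, $\beta=\widehat k$ and $\alpha=\alpha_2$, which yields a constant $K>0$ with
\begin{align*}
 e^{\widehat k\,|\tr(\log U)|^2}=e^{\widehat k\,(\log\det U)^2}\geq K\,|\det U-1|^{\alpha_2\widehat k}.
\end{align*}

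Multiplying the first inequality by $\tfrac{\mu}{k}>0$, the second by $\tfrac{\kappa}{2\widehat k}>0$, and adding them gives exactly \eqref{eq:coerc-dev} with $C_1=\tfrac{\mu}{k}K_1$, $C_2=\tfrac{\kappa}{2\widehat k}K$ and $C_3=\tfrac{\mu}{k}K_2$. There is no genuine obstacle here: the whole lemma reduces, through the split, to the two elementary coercivity estimates already proved, so the argument is short. The only point that needs attention is that the coercivity theorem has to be applied to the isochorically normalized stretch $\overline U$ rather than to $U$ itself — the isochoric energy $e^{k\,\|\dev_n\log U\|^2}$ bounds only $\|\overline U-\id\|$ and, as recorded in the remark immediately preceding the lemma, cannot bound any power of $\|U-\id\|$.
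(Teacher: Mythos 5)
Your argument is correct and coincides with the paper's own proof: both use the identity $\dev_n\log U=\log\frac{U}{\det U^{1/n}}$ together with $\tr(\log U)=\log\det U$, then apply the unconditional coercivity estimate \eqref{eq:coerc} to the isochoric part and Corollary \ref{thm:coerccor} to the volumetric part, and finally rescale the constants by $\frac{\mu}{k}$ and $\frac{\kappa}{2\widehat{k}}$. No gaps; the explicit introduction of $\overline U$ is just a notational clarification of the same step.
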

\begin{proof}
Let us repeat that
\begin{align}
\dev_n\log U=\log U-\frac{1}{n}\,\tr(\log U)\cdot \id=\log U-\frac{1}{n}\,\log(\det U)\cdot \id=\log \frac{U}{\det U^{1/n}}.
\end{align}
Hence, using \eqref{eq:coerc} we know that for arbitrary $\al_1>0$ there are constants $K_1,K_3>0$ such that
\begin{align}
e^{k\,\|\dev_n\log U\|^2}=e^{k\,|\log \frac{U}{\det U^{1/n}}|^2}\geq K_1\,\left\| \frac{U}{\det U^{1/n}}-\id\right\|^{\alpha_1k}-K_3.
\end{align}
On the other hand, using Corollary \ref{thm:coerccor} we obtain that for arbitrary $\alpha_2>0$ there is the constant $K_2>0$ such that
\begin{equation}\label{unucoer2}
 e^{\widehat{k}|\tr(\log U)|^2}\geq K_2\,|\det U-1|^{\alpha_2\widehat{k}}.
 \end{equation}
 With the choices $C_1= \frac{\mu}{k}\,K_1, C_2=\frac{\kappa}{2\widehat{k}} \,K_2, C_3=\frac{\mu}{k}\,K_3$,  the proof is complete.
\end{proof}

Using a technique similar to that used in   \cite{Hartmann_Neff02}
we obtain:

\begin{theorem}\label{thuncoe} Assume $\mu>0,\ \kappa>0$. Regardless of dimension $n\nat$ and $k,\widehat{k}>0$,
 and for arbitrary $q\geq 1$ there are  the constants $K_1,\,K_2>0$ such that for all $U\in PSym(n)$
\begin{align}\label{coetott}
  \widehat{W}_{_{\rm eH}}(U)=\frac{\mu}{k}\,e^{k\,\|\dev_n\log U\|^2}+\frac{\kappa}{2\widehat{k}}\,e^{\widehat{k}\,|{\rm tr}(\log U)|^2}\geq K_1\|U-\id\|^q-K_2.
\end{align}
\end{theorem}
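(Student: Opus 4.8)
The plan is to reduce the claim to Lemma \ref{codevtr} by controlling $\|U-\id\|$ through the isochoric quantity $\bigl\|\frac{U}{\det U^{1/n}}-\id\bigr\|$ and the volumetric quantity $|\det U-1|$, in the spirit of the technique of \cite{Hartmann_Neff02}. Set $d:=(\det U)^{1/n}>0$ and $\widehat{U}:=\frac{U}{\det U^{1/n}}$, so that $U=d\,\widehat{U}$ and $\det\widehat{U}=1$. First I would write $U-\id=(d-1)\widehat{U}+(\widehat{U}-\id)$ and combine the triangle inequality with $\|\widehat{U}\|\le\|\widehat{U}-\id\|+\sqrt n$ to obtain
\[
 \|U-\id\|\le |d-1|\bigl(\|\widehat{U}-\id\|+\sqrt n\bigr)+\|\widehat{U}-\id\|.
\]

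Next I would estimate $|d-1|$ by $|\det U-1|$: since $d^n-1=(d-1)(1+d+\dots+d^{n-1})$ and $1+d+\dots+d^{n-1}\ge 1$ for every $d>0$, one has $|d-1|\le|d^n-1|=|\det U-1|$. Raising the previous display to the power $q\ge 1$, using convexity of $t\mapsto t^q$ on $[0,\infty)$ (hence $(a+b+c)^q\le 3^{q-1}(a^q+b^q+c^q)$) and then Young's inequality $a^q b^q\le\tfrac12(a^{2q}+b^{2q})$, followed by $t^q\le 1+t^{2q}$ to absorb the lower powers, I arrive at an estimate of the form
\[
 \|U-\id\|^q\le C_{n,q}\Bigl(\bigl\|\tfrac{U}{\det U^{1/n}}-\id\bigr\|^{2q}+|\det U-1|^{2q}\Bigr)+\widetilde{C}_{n,q},
\]
with constants depending only on $n$ and $q$.

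Finally I would apply Lemma \ref{codevtr} with the choices $\alpha_1=\frac{2q}{k}$ and $\alpha_2=\frac{2q}{\widehat{k}}$, which yields constants $C_1,C_2,C_3>0$ with
\[
 \widehat{W}_{_{\rm eH}}(U)\ge C_1\bigl\|\tfrac{U}{\det U^{1/n}}-\id\bigr\|^{2q}+C_2|\det U-1|^{2q}-C_3 .
\]
Combining this with the previous display, choosing $K_1>0$ so small that $K_1 C_{n,q}\le\min\{C_1,C_2\}$ and then setting $K_2:=K_1\widetilde{C}_{n,q}+C_3$, gives \eqref{coetott}. I expect the only genuinely delicate point to be the first two steps, i.e.\ passing from control of the split quantities back to control of $\|U-\id\|$ itself; once that bilinear-type estimate is in place the rest is routine bookkeeping of exponents and constants, and the hypothesis $q\ge 1$ enters exactly where convexity of $t\mapsto t^q$ is invoked.
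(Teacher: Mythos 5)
Your argument is correct and follows essentially the same route as the paper's own proof: split $U-\id$ into an isochoric part $\frac{U}{\det U^{1/n}}-\id$ and a volumetric part controlled by $\det U^{1/n}-1$ (resp.\ $\det U-1$), estimate $\|U-\id\|^q$ by the sum of the $2q$-th powers of these quantities via the $q$-convexity and Young-type inequalities, and then invoke Lemma \ref{codevtr} with $\alpha_1=\frac{2q}{k}$, $\alpha_2=\frac{2q}{\widehat{k}}$. Your explicit use of $|d-1|\le|\det U-1|$ even makes the matching with the lemma's right-hand side slightly cleaner than in the paper, but the substance of the proof is the same.
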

\begin{proof}
Using the inequality
$|a+b|^q\leq 2^{q-1}\,(|a|^q+|b|^q)$ for all $a,b>0$, and $q\geq1$,
we deduce
\begin{align}
\|U-\id\|\,^q&=\left\|\left(\frac{U}{\det{U}^{1/n}}-\id\right)\det{U}^{1/n}+
\det{U}^{1/n}\cdot \id-\id\right\|^q\notag\\
&\leq \left[\left\|\frac{U}{\det{U}^{1/n}}-\id\right\|\,|\det{U}|^{1/n}+n
|\det{U}^{1/n}-1|\,\right]^q\\
&\leq 2^{q-1}\left[\left\|\frac{U}{\det{U}^{1/n}}-\id\right\|^q\,|\det{U}|^{q/n}+n^q
|\det{U}^{1/n}-1|^q\right]\notag
.\end{align}
Young's inequality 
leads  to \begin{align}
\|U-\id\|\,^q&\leq 2^{q-1}\left[\left\|\frac{U}{\det{U}^{1/n}}-\id\right\|^q\, |\det{U}|^{q/n}+n^q |\det{U}^{1/n}-1|^q\right]\notag\\
&\leq 2^{q-1}\left[\frac{1}{2}\left\|\frac{U}{\det{U}^{1/n}}-\id\right\|^{2q}+\frac{1}{2}|\det{U}|^{2q/n}+ \frac{n^q}{2} |\det{U}^{1/n}-1|^{2q}+\frac{n^q}{2}\right],
\end{align}
which entails

\begin{align}
\|U-\id\|\,^q&\leq 2^{q-1}\left[\frac{1}{2}\|\frac{U}{\det{U}^{1/n}}-\id\|^{2q}+\frac{1}{2}(|\det{U}^{1/n}-1|+1)^{2q}+ \frac{n^q}{2} |\det{U}^{1/n}-1|^{2q}+\frac{n^q}{2}\right]\notag\\
&\leq 2^{q-2}\left[\|\frac{U}{\det{U}^{1/n}}-\id\|^{2q}
+2^{{2q}-1}(|\det{U}^{1/n}-1|^{2q}+1)+ n^q |\det{U}^{1/n}-1|^{2q}+n^q\right]\notag\\
&= 2^{q-2}\left[\|\frac{U}{\det{U}^{1/n}}-\id\|^{2q}
+\left(n^q
+2^{{2q}-1}\right)|\det{U}^{1/n}-1|^{2q}
+n^q+2^{{2q}-1}\right].\label{ineq415}
\end{align}

\newcommand{\khat}{\widehat{k}}
 Let $C_1, C_2, C_3 >0$ be as provided upon an application of Lemma \ref{codevtr} with the choices of $\al_1=2q/k$, $\al_2=2q/\khat$, and define $A_1=\max\{\frac{2^{q-2}}{C_1}, \frac{2^{q-2}n^q+2^{3q-3}}{C_2}\}$ and $A_2=2^{q-2}n^q+2^{3q-3}$. Then \eqref{ineq415} leads to
\begin{align}
\|U-\id\|\,^q
&\leq A_1\left[C_1\|\frac{U}{\det{U}^{1/n}}-\id\|^{\alpha_1 k}
+C_2|\det{U}^{1/n}-1|^{\alpha_2\widehat{k}}\right]+A_2,
\end{align}
thus by definition of $C_1, C_2, C_3$, the inequality given by Lemma \ref{codevtr} can be used to deduce
\begin{align}
\|U-\id\|\,^q
&\leq A_1\left[ \frac{\mu}{k}\,e^{k\,\|\dev_n\log U\|^2}+\frac{\kappa}{2\widehat{k}}\,e^{\widehat{k}\,|{\rm tr}(\log U)|^2}+C_3\right]+A_2,
\end{align}
and further
\begin{align}
A^{-1}_1\|U-\id\|^q-C_3-A_1^{-1}A_2
&\leq  \frac{\mu}{k}\,e^{k\,\|\dev_n\log U\|^2}+\frac{\kappa}{2\widehat{k}}\,e^{\widehat{k}|{\rm tr}(\log U)|^2}.
\end{align}
Choosing  $K_1=A^{-1}_1$ and $K_2=C_3+A_1^{-1}A_2$, we obtain the inequality
\eqref{coetott}, and the proof is complete.
\end{proof}

\begin{Def} {\rm (Coercivity)} Let $I(\varphi)$ be the elastic stored energy
functional depending on the deformation $\varphi(x,t)$. We say that $I$ is $q$-coercive (for $q\geq 1$)
whenever {for all $K>0$ there is some $\widetilde{K}>0$ such that }for any $\varphi\in W^{1,q}(\Omega,\R^n)$
\begin{align}
I(\varphi)\leq K \,{\Rightarrow} \, \|\nabla \varphi\|_{L^q(\Omega)}\leq
\widetilde{K}.
\end{align}
\end{Def}
A direct consequence of Theorem \ref{thuncoe} is the following result:
\begin{theorem}  Assume for the elastic moduli $\mu>0,\ \kappa>0$ and $k>0, \ \widehat{k}>0$.  Consider  the energy
\begin{align}
I(\varphi)=\int_\Omega W_{_{\rm eH}}(\nabla \varphi(x)) \,dx
\end{align}
 where
$
W_{_{\rm eH}}(F)=\widehat{W}_{_{\rm eH}}(U)= \frac{\mu}{k}\,e^{k\,\|\dev_2\log U\|^2}+\frac{\kappa}{2\,\widehat{k}}\,e^{\widehat{k}\,|{\rm tr}(\log U)|^2}.
$ Then $I(\varphi)$ is $q$-coercive for any $1\leq q<\infty$.
\end{theorem}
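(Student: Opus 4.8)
The plan is to integrate the pointwise coercivity estimate of Theorem~\ref{thuncoe} over $\Omega$ and then transfer the resulting bound on the right stretch tensor $U$ back to the deformation gradient $F=\nabla\varphi$.

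First I would fix $q\geq 1$ and invoke Theorem~\ref{thuncoe}, which provides constants $K_1,K_2>0$ --- depending on $q$, the dimension and the elastic moduli $\mu,\kappa,k,\widehat k$, but not on $\varphi$ --- such that $\widehat{W}_{_{\rm eH}}(U)\geq K_1\,\|U-\id\|^q-K_2$ for every $U\in{\rm PSym}(2)$. The key elementary observation is that $\|F\|^2=\tr(F^TF)=\tr(U^2)=\|U\|^2$, so the Frobenius norms of $F$ and $U$ coincide; consequently $\|\nabla\varphi\|=\|U\|\leq\|U-\id\|+\sqrt n$ pointwise almost everywhere in $\Omega$, and by the convexity inequality $|a+b|^q\leq 2^{q-1}(|a|^q+|b|^q)$ also $\|\nabla\varphi\|^q\leq 2^{q-1}\big(\|U-\id\|^q+n^{q/2}\big)$ a.e.

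Given $\varphi\in W^{1,q}(\Omega,\R^n)$ with $I(\varphi)\leq K$, integrating the pointwise lower bound yields
$$K\geq I(\varphi)=\int_\Omega W_{_{\rm eH}}(\nabla\varphi)\,dx=\int_\Omega\widehat{W}_{_{\rm eH}}(U)\,dx\geq K_1\int_\Omega\|U-\id\|^q\,dx-K_2\,|\Omega|,$$
so that $\int_\Omega\|U-\id\|^q\,dx\leq (K+K_2|\Omega|)/K_1$. Integrating the pointwise estimate for $\|\nabla\varphi\|^q$ then gives
$$\|\nabla\varphi\|_{L^q(\Omega)}^q\leq 2^{q-1}\left(\frac{K+K_2|\Omega|}{K_1}+n^{q/2}\,|\Omega|\right),$$
and the right-hand side depends only on $K$, $q$, $|\Omega|$ and the material parameters; taking $q$-th roots produces the required $\widetilde K$.

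There is essentially no serious obstacle here: the whole content sits in Theorem~\ref{thuncoe}. The only points deserving care are the identity $\|F\|=\|U\|$, which is what legitimizes replacing the stretch tensor by the deformation gradient in the estimate, and the fact that the constants furnished by Theorem~\ref{thuncoe} are allowed to depend on $q$ --- so the argument must be run for each fixed exponent separately rather than uniformly in $q$, which is exactly why the definition of $q$-coercivity quantifies over $q$ outside.
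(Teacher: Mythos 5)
Your argument is correct and is precisely the route the paper intends: it states the theorem as a ``direct consequence'' of Theorem~\ref{thuncoe}, and your proof supplies exactly that step --- integrating the pointwise bound $\widehat{W}_{_{\rm eH}}(U)\geq K_1\|U-\id\|^q-K_2$ over $\Omega$ and passing from $U$ to $\nabla\varphi$ via $\|F\|=\|U\|$. The points you flag (the Frobenius-norm identity and the $q$-dependence of the constants) are handled correctly, so nothing is missing.
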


\section{The static problem in the planar case}
\subsection{Formulation of the static problem in the planar case}\label{Formulation}\setcounter{equation}{0}
The static problem in the planar case consists in finding the solution $\varphi$ of the equilibrium  equation
\begin{align}\label{exst}
0={\rm Div} \,S_1(\nabla \varphi)\qquad  \text{in} \qquad \Omega\subset
{\R^2},
\end{align}
where the first Piola-Kirchhoff stress tensor corresponding to the energy $W_{_{\rm eH}}(F)$ is given by the constitutive equation
\begin{align}\label{pks}
&S_1(F)=\left[2{\mu}\,e^{k\,\|\dev_2\log\,U\|^2}\cdot \dev_2\log\,U+{\kappa}\,e^{\widehat{k}\,[\tr(\log U)]^2}\,\tr(\log U)\cdot \id\right]F^{-T},  \qquad x\in\overline{\Omega},
\end{align}
with $F=\nabla \varphi, \ U= \sqrt{F^TF}$. The above system of equations is supplemented, in the
case of the mixed problem,  by the boundary conditions
\begin{align}\label{cl1}
{\varphi}({x})&=\widehat{\varphi}_i({x}) \qquad \text{ on  }\quad \Gamma_D,\qquad
{S}_1({x}).\,n=\widehat{s}_1({x}) \qquad \text{ on  }\quad \Gamma_N,\notag
\end{align}
where $\Gamma_D,\Gamma_N$  are subsets of the boundary $\partial \Omega$, so that $\Gamma_D\cup\overline{\Gamma}_N=\partial \Omega$,
$\Gamma_D\cap{\Gamma}_N=\emptyset$, ${n}$ is the unit outward normal to the boundary and  $\widehat{\varphi}_i, \widehat{s}_1$ are prescribed fields.

\subsection{Existence of minimizers in plane elastostatics}

In plane elastostatics, having proved the coercivity and polyconvexity of the energy $W(U)$, it is a standard matter to prove the existence
of a minimizer.

\begin{theorem}\label{mainexist}{\rm (Existence of minimizers)} Let the reference configuration
$\Omega\subset \mathbb{R}^2$ be a bounded smooth domain and let $\Gamma_D$ be a non-empty and relatively open part of the boundary  $\partial \Omega$. Assume that
$I(\varphi)=\int_\Omega W_{eH}(\nabla \varphi(x)) dx$ where
$
W_{_{\rm eH}}(F)=\widehat{W}_{_{\rm eH}}(U)= \frac{\mu}{k}\,e^{k\,\|\dev_2\log U\|^2}+\frac{\kappa}{2\widehat{k}}\,e^{\widehat{k}\,|{\rm tr}(\log U)|^2}.
$
Let $\varphi_0\in W^{1,q}(\Omega), \ q\geq1$ be given with $I(\varphi_0)<\infty$ and  $\mu>0,\ \kappa>0$, $k>\frac{1}{3}$ and $\widehat{k}>\frac{1}{8}$. Then the problem
\begin{align}
\min\left\{I(\varphi)=\int_\Omega W_{_{\rm eH}}(\nabla \varphi(x)) dx, \ \varphi=\varphi_0 \quad \text{on} \ \Gamma_D\subset \partial \Omega,\quad  \varphi\in W^{1,q}(\Omega)\right\}
\end{align}
admits at least one solution $\varphi$. Moreover, $\varphi\in W^{1,p}(\Omega)$ for all $p\geq1$.
\end{theorem}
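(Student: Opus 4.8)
The plan is to run the direct method of the calculus of variations, with Theorem \ref{mainth} supplying polyconvexity (valid here since $k>\frac13$ and $\widehat k>\frac18$) and Theorem \ref{thuncoe} supplying coercivity. First I would record the two inputs in usable form. Since $\|U\|^2=\tr(U^2)=\tr(F^TF)=\|F\|^2$, Theorem \ref{thuncoe} gives, for \emph{every} exponent $q\geq 1$, constants $K_1,K_2>0$ with $W_{_{\rm eH}}(F)\geq K_1\|F\|^{q}-K_2$; in particular $W_{_{\rm eH}}(F)=+\infty$ whenever $\det F\leq 0$. The admissible class $\mathcal{A}=\{\varphi\in W^{1,q}(\Omega,\R^2):\varphi=\varphi_0\text{ on }\Gamma_D\}$ is nonempty because $I(\varphi_0)<\infty$, so $m:=\inf_{\mathcal A}I<\infty$, and I take a minimizing sequence $(\varphi_j)\subset\mathcal A$ with $I(\varphi_j)\to m$.

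Next I would upgrade the integrability of the minimizing sequence: fix any $p>2$; since $I(\varphi_j)\leq m+1$ eventually, the $p$-coercivity bound forces $\|\nabla\varphi_j\|_{L^p(\Omega)}$ to be bounded, and the Dirichlet condition on the nonempty relatively open set $\Gamma_D$ together with a Poincaré--Friedrichs inequality makes $(\varphi_j)$ bounded in $W^{1,p}(\Omega,\R^2)$. Passing to a subsequence, $\varphi_j\rightharpoonup\varphi$ in $W^{1,p}$, so $\nabla\varphi_j\rightharpoonup\nabla\varphi$ in $L^p$. Because $p>2$, the Jacobians $\det\nabla\varphi_j$ are bounded in $L^{p/2}$ with $p/2>1$, and by the classical weak continuity of the determinant (Ball, Reshetnyak; see \cite{Ball77,Dacorogna08}) $\det\nabla\varphi_j\rightharpoonup\det\nabla\varphi$ in $L^{p/2}(\Omega)$; in two dimensions this is the only nontrivial minor, so $(\nabla\varphi_j,\det\nabla\varphi_j)\rightharpoonup(\nabla\varphi,\det\nabla\varphi)$ in $L^p\times L^{p/2}$.

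By polyconvexity we write $W_{_{\rm eH}}(F)=P(F,\det F)$ with $P$ convex, lower semicontinuous and bounded below, extended by $+\infty$ on $\{\det\leq 0\}$ consistently with \eqref{the}. The Morrey--Tonelli--Serrin lower semicontinuity theorem for convex integrands depending on a weakly convergent gradient and its weakly convergent Jacobian then yields
\[
  I(\varphi)=\int_\Omega P(\nabla\varphi,\det\nabla\varphi)\,dx\ \leq\ \liminf_{j\to\infty}\int_\Omega P(\nabla\varphi_j,\det\nabla\varphi_j)\,dx\ =\ \liminf_{j\to\infty}I(\varphi_j)\ =\ m.
\]
In particular $I(\varphi)<\infty$, which forces $\det\nabla\varphi>0$ almost everywhere, so $\varphi$ is an orientation-preserving deformation; and since the trace operator $W^{1,p}(\Omega)\to L^p(\Gamma_D)$ is weakly continuous, $\varphi=\varphi_0$ on $\Gamma_D$, hence $\varphi\in\mathcal A$ and $I(\varphi)=m$: $\varphi$ is a minimizer. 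Finally, $I(\varphi)<\infty$ together with $q$-coercivity (Theorem \ref{thuncoe}) for every $q\geq 1$ gives $\|\nabla\varphi\|_{L^q}<\infty$ for all $q$, i.e.\ $\varphi\in W^{1,p}(\Omega)$ for all $p\geq 1$.

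The genuinely delicate point — and essentially the only one — is the lower semicontinuity step: one must ensure that the convex representative $P$ is (or can be replaced by its lower semicontinuous envelope without changing the functional on admissible maps) lower semicontinuous and extended-real-valued so as to encode the constraint $\det F>0$, and that the weak continuity of the Jacobian holds in the function space dictated by the coercivity exponent. It is precisely for the latter that it pays to exploit that Theorem \ref{thuncoe} yields coercivity for \emph{all} $q$: we may take $p>2$, placing $\det\nabla\varphi_j$ in a reflexive Lebesgue space where weak convergence of the determinant is classical. Everything else is the standard machinery of Ball's existence theory.
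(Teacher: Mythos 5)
Your proposal is correct and is exactly the argument the paper intends: it explicitly reduces Theorem \ref{mainexist} to the polyconvexity of Theorem \ref{mainth} and the unconditional $q$-coercivity of Theorem \ref{thuncoe}, calling the remaining direct-method step ``standard'', and your write-up simply fills in that standard machinery (minimizing sequence bounded in $W^{1,p}$ for some $p>2$ via coercivity and Poincar\'e, weak continuity of the Jacobian, weak lower semicontinuity of the convex representative $P(F,\det F)$ extended by $+\infty$ for $\det F\leq 0$, trace passage to the limit, and the final bootstrap to $W^{1,p}$ for all $p\geq 1$). No gaps worth flagging.
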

\begin{remark}
 Formally, this solution corresponds to a solution of the boundary-value problem formulated in Subsection \ref{Formulation}. However, the minimizing property of $\varphi$ alone is not sufficient to show that the Euler-Lagrange equation \eqref{exst} is satisfied by $\varphi$ in a weak sense: since we do not know whether $\det \nabla \varphi\geq c>0$, it is not clear whether the energy functional is Frech\'{e}t-differentiable at the minimizer.
\end{remark}

\begin{remark}
While the parameters $\mu,\, \kappa>0$ are already uniquely determined from the infinitesimal material response,
$k,\widehat{k}>0$ can be used to fit some nonlinear aspects of the response. This will be done in a future contribution.
\end{remark}

\section{The three-dimensional case: $F\mapsto e^{k\,\|\dev_3\log U\|^2}$}\setcounter{equation}{0}

The 3D-case is, as usual, much more involved.
As was previously shown \cite{NeffGhibaLankeit}, the exponentiated Hencky energy
\[
	F\mapsto \frac{\mu}{k}\,e^{k\, \|\dev_n\log U\|^2}\,, \quad k>\frac{3}{16},\; \mu>0
\]
in dimension $n=3$ is not rank-one convex and therefore not polyconvex. However, numerical results strongly suggest that $W_{\rm eH}$ is, in fact, rank-one convex on a cone of the form
\[
	\mathcal{E} =\{U\in{\rm PSym}(3)\,\big|\, \|\dev_3\log U\|^2<\frac{2}{3}\, \widetilde{\boldsymbol{\sigma}}_{\!\mathbf{y}}^2\,\}.
\]
with $\widetilde{\boldsymbol{\sigma}}_{\!\mathbf{y}} \gg 1$. This convexity property is of particular interest in the theory of plasticity, since the loss of rank-one convexity occurs only for strains which induce permanent deformations. We will discuss the possible application of the exponentiated Hencky energy in plasticity theory in the near future \cite{NeffGhibaPlasticity}.

\section{Summary and open problems}
\setcounter{equation}{0}
To summarize, in the present paper
\begin{itemize}
\item We have applied Steigmann's polyconvexity condition and proved that the planar exponentiated Hencky-strain energy function
\begin{align}
F\mapsto W_{_{\rm eH}}(F):=\widehat{W}_{_{\rm eH}}(U):&=\left\{\begin{array}{lll}
\frac{\mu}{k}\,e^{k\,\|\dev_2\log\,U\|^2}+\frac{\kappa}{2\widehat{k}}\,e^{\widehat{k}\,(\tr(\log U)^2}&\ \ \ \!\!\text{if}& \det\, F>0,\vspace{2mm}\\
+\infty &\ \ \ \!\! \text{if} &\det F\leq 0\end{array}\right.
\end{align}
is {\bf polyconvex } for  $\mu>0, \kappa>0$, $k\geq\dd\frac{1}{3}$ and $\widehat{k}\dd\geq \tel8$.
\item We have shown that the exponentiated volumetric energy function
\begin{align}
 F\mapsto \frac{\kappa}{2\widehat{k}}\,e^{\widehat{k}\,(\tr(\log U))^m}, \quad
F\in {\rm GL}^+(n)
\end{align}
is {\bf polyconvex } w.r.t $F$ for $\widehat{k}\geq \tel{m^{(m+1)}}$.
\item  We have proven that, regardless of dimension $n\nat$ and $k,\widehat{k}>0$, the energies of the family
$F\mapsto W_{_{\rm eH}}(F)$  satisfy  $q$-growth coercivity for any $1\leq q<\infty$, and therefore allow  in the planar case $n=2$ for a
complete existence theory based on the direct methods of the calculus of variations.
\end{itemize}

Using the terminology from  \cite{Neff_Osterbrink_Martin_hencky13,,Neff_Nagatsukasa_logpolar13}, in {the present} paper we have shown polyconvexity  of
\begin{align}
W_{_{\rm eH}}(F):=\frac{\mu}{k}\,e^{k\,{\rm dist}^2_{{\rm geod,{\rm SL}(2)}}\left( \frac{F}{\det F^{1/2}},\, {\rm SO}(2) \right)}+\frac{\kappa}{2\widehat{k}}\,e^{\widehat{k}\,{\rm dist}^2_{{\rm geod},\mathbb{R}_+\cdot \id}\left(\det F^{1/2}\cdot \id,\id\right)}.
\end{align}
and we have proved the existence of the solution of the corresponding minimization problem.

\medskip

In the first part \cite{NeffGhibaLankeit} of this paper we have shown rank-one convexity for $k\geq \frac{1}{4}$. Here, we have obtained polyconvexity for $k\geq \frac{1}{3}$. Hence, a first open problem is to investigate the gap $\frac{1}{3}> k\geq \frac{1}{4}$.

\medskip


 Results obtained by Pipkin \cite{Pipkin}, concerning convexity conditions
when $F$ is a $3\times 2$ matrix, may be used to extend our polyconvexity
results to membrane theory. The associated stretch tensor is $U=\sqrt{F^{T}F}%
,$ which is still a $2\times 2$ matrix, just as in the case of plane strain
considered here. The results of \cite{Pipkin} ensure that polyconvexity with respect
to $2\times 2$ deformation gradients - established here for the family $W_{_{\rm eH}}$
- yield polyconvexity of the same energy with respect to the $3\times 2$
deformation gradients of membrane theory \cite{steigmann1990tension}, provided that the first Piola-Kirchhoff stress $S_1$ (the right-hand side of equation \eqref{pks}) is positive semi-definite. The latter restriction
is necessary for rank-one convexity (and hence also for polyconvexity) when $%
F$ is a $3\times 2$ matrix. However, this is not enough to yield the
existence of minimizers, even in the presence of coercivity, because the
restriction on $F,$ required for a positive semi-definite stress, cannot be
guaranteed \textit{a priori.}

\section{Acknowledgement}
We  would like to thank Prof. Mircea B\^irsan  (University of
Duisburg-Essen) for indicating to us a technical point and showing  that $e^{\|\log U\|^2}$ does
not satisfy the Steigmann's sufficiency criterion in 3D and to Prof.
Bernard Dacorogna (EPFL-Lausanne) for sending us  reference
\cite{DacorognaMarechal} and  Prof.  Miroslav \v{S}ilhav\'{y} (Academy of Sciences of the Czech Republic, Prague) for
comments on rank-one convexity and polyconvexity.

\bibliographystyle{plain} 

\begin{thebibliography}{10}

\bibitem{Ball78}
J.M. Ball.
\newblock Constitutive inequalities and existence theorems in nonlinear
  elastostatics.
\newblock In R.J. Knops, editor, {\em Herriot {W}att {S}ymposion: {N}onlinear
  {A}nalysis and {M}echanics.}, volume~1, pages 187\,--\,238. Pitman, London,
  1977.

\bibitem{Ball77}
J.M. Ball.
\newblock Convexity conditions and existence theorems in nonlinear elasticity.
\newblock {\em Arch. Rat. Mech. Anal.}, 63:337\,--\,403, 1977.

\bibitem{Ball02}
J.M. Ball.
\newblock Some open problems in elasticity.
\newblock In P.~Newton et~al., editor, {\em Geometry, mechanics, and
  dynamics.}, pages 3\,--\,59. Springer, New\,-\,York, 2002.

\bibitem{Balzani_Neff_Schroeder05}
D.~Balzani, P.~Neff, J.~Schr\"oder, and G.A. Holzapfel.
\newblock A polyconvex framework for soft biological tissues. {A}djustment to
  experimental data.
\newblock {\em Int. J. Solids Struct.}, 43(20):6052\,--\,6070, 2006.

\bibitem{Balzani_Schroeder_Gross_Neff05}
D.~Balzani, J.~Schr\"oder, D.~Gross, and P.~Neff.
\newblock Modeling of {A}nisotropic {D}amage in {A}rterial {W}alls {B}ased on
  {P}olyconvex {S}tored {E}nergy {F}unctions.
\newblock In D.R.J. Owen, E.~Onate, and B.~Suarez, editors, {\em Computational
  Plasticity VIII, Fundamentals and Applications, Part 2}, pages 802\,--\,805.
  CIMNE, Barcelona, 2005.

\bibitem{Neff_log_inequality13}
M.~B\^{i}rsan, P.~Neff, and J.~Lankeit.
\newblock Sum of squared logarithms: {A}n inequality relating positive definite
  matrices and their matrix logarithm.
\newblock {\em J. Inequal. Appl.}, 2013(1):168, 2013.

\bibitem{Borwein2010}
J.~M. {Borwein} and J.~D. {Vanderwerff}.
\newblock {\em {Convex functions. Constructions, characterizations and
  counterexamples.}}
\newblock Cambridge: Cambridge University Press, 2010.

\bibitem{Bruhns01}
O.T. Bruhns, H.~Xiao, and A.~Mayers.
\newblock Constitutive inequalities for an isotropic elastic strain energy
  function based on {H}encky's logarithmic strain tensor.
\newblock {\em Proc. Roy. Soc. London A}, 457:2207\,--\,2226, 2001.

\bibitem{Bruhns02JE}
O.T. Bruhns, H.~Xiao, and A.~Mayers.
\newblock Finite bending of a rectangular block of an elastic {Hencky}
  material.
\newblock {\em J. Elasticity}, 66(3):237\,--\,256, 2002.

\bibitem{Busemann}
H.~{Busemann}, G.~{Ewald}, and G.~C. {Shephard}.
\newblock {Convex bodies and convexity on Grassmann cones. I\,-\,IV.}
\newblock {\em {Math. Ann.}}, 151:1\,--\,14, 1963.

\bibitem{chen1996stability}
Y.C. Chen.
\newblock Stability and bifurcation of homogeneous deformations of a
  compressible elastic body under pressure load.
\newblock {\em Math. Mech. Solids}, 1(1):57\,--\,72, 1996.

\bibitem{Dacorogna08}
B.~Dacorogna.
\newblock {\em Direct {M}ethods in the {C}alculus of {V}ariations.}, volume~78
  of {\em Applied {M}athematical {S}ciences}.
\newblock Springer, Berlin, 2. edition, 2008.

\bibitem{DacorognaPRSE90}
B.~{Dacorogna}, J.~{Douchet}, W.~{Gangbo}, and J.~{Rappaz}.
\newblock {Some examples of rank\,-\,one convex functions in dimension two.}
\newblock {\em {Proc. R. Soc. Edinb., Sect. A, Math.}},
  114(1\,-\,2):135\,--\,150, 1990.

\bibitem{DacorognaKoshigoe}
B.~Dacorogna and H.~Koshigoe.
\newblock On the different notions of convexity for rotationally invariant
  functions.
\newblock {\em Ann. Fac. Sci. Toulouse}, 2:163\,--\,184, 1993.

\bibitem{DacorognaMarcellini}
B.~Dacorogna and P.~Marcellini.
\newblock {\em Implicit partial differential equations}.
\newblock Birkh\"{a}user, Boston, 1999.

\bibitem{DacorognaMarechal}
B.~Dacorogna and P.~Mar\'{e}chal.
\newblock A note on spectrally defined polyconvex functions.
\newblock In M.~Carozza et~al., editor, {\em Proceedings of the workshop ``New
  Developments in the calculus of variations"}, pages 27\,--\,54. Edizioni
  Scientifiche Italiane, Napoli, 2006.

\bibitem{Davis57}
C.~Davis.
\newblock All convex invariant functions of hermitian matrices.
\newblock {\em Arch. Math.}, 8:276\,--\,278, 1957.

\bibitem{Ebbing_Schroeder_Neff_AAM09}
V.~Ebbing, J.~Schr\"oder, and P.~Neff.
\newblock Approximation of anisotropic elasticity tensors at the reference
  state with polyconvex energies.
\newblock {\em Arch. Appl. Mech.}, 79:651\,--\,657, 2009.

\bibitem{Hartmann_Neff02}
S.~Hartmann and P.~Neff.
\newblock Polyconvexity of generalized polynomial type hyperelastic strain
  energy functions for near incompressibility.
\newblock {\em Int. J. Solids Struct.}, 40(11):2767\,--\,2791, 2003.

\bibitem{henann2011large}
D.L. Henann and L.~Anand.
\newblock A large strain isotropic elasticity model based on molecular dynamics
  simulations of a metallic glass.
\newblock {\em J. Elasticity}, 104(1\,-\,2):281\,--\,302, 2011.

\bibitem{Hencky28a}
H.~Hencky.
\newblock {\"U}ber die {F}orm des {E}lastizit\"atsgesetzes bei ideal
  elastischen {S}toffen.
\newblock {\em Z. Techn. Physik}, 9:215\,--\,220,
  \texttt{https://www.uni\,--\,due.de/imperia/md/content/mathematik/ag\_neff/hencky1928.pdf}
  (see also the technical translation NASA TT\,--\,21602), 1928.

\bibitem{Hencky29b}
H.~{Hencky}.
\newblock {Das Superpositionsgesetz eines endlich deformierten
  relaxationsf\"ahigen elastischen Kontinuums und seine Bedeutung f\"ur eine
  exakte Ableitung der Gleichungen f\"ur die z\"ahe Fl\"ussigkeit in der
  Eulerschen Form.}
\newblock {\em {Ann. der Physik}}, 2:617\,--\,630,
  \texttt{https://www.uni\,--\,due.de/imperia/md/content/mathematik/ag\_neff/hencky\_superposition1929.pdf},
  1929.

\bibitem{Hencky29a}
H.~{Hencky}.
\newblock {Welche Umst\"ande bedingen die Verfestigung bei der bildsamen
  Verformung von festen isotropen K\"orpern?}
\newblock {\em {Z. Phys.}}, 55:145\,--\,155,
  \texttt{https://www.uni\,--\,due.de/imperia/md/content/mathematik/ag\_neff/hencky1929.pdf},
  1929.

\bibitem{Hencky31}
H.~Hencky.
\newblock The law of elasticity for isotropic and quasi\,-\,isotropic
  substances by finite deformations.
\newblock {\em J. Rheol.}, 2:169\,--\,176,
  \texttt{https://www.uni\,--\,due.de/imperia/md/content/mathematik/ag\_neff/henckyjrheology31.pdf},
  1931.

\bibitem{LankeitNeffNakatsukasa}
J.~Lankeit, P.~Neff, and Y.~Nakatsukasa.
\newblock The minimization of matrix logarithms: {O}n a fundamental property of
  the unitary polar factor.
\newblock {\em Lin. Alg. Appl.}, 449(0):28 \,--\, 42, 2014.

\bibitem{}
Johannes Lankeit, Patrizio Neff, and Yuji Nakatsukasa.
\newblock The minimization of matrix logarithms: On a fundamental property of
  the unitary polar factor.
\newblock {\em Linear Algebra and its Applications}, 449(0):28 \,--\, 42, 2014.

\bibitem{Lewis96b}
A.S. {Lewis}.
\newblock {Convex analysis on the Hermitian matrices.}
\newblock {\em {SIAM J. Optim.}}, 6(1):164\,--\,177, 1996.

\bibitem{Lewis03}
A.S. {Lewis}.
\newblock {The mathematics of eigenvalue optimization.}
\newblock {\em {Math. Program.}}, 97(1\,-\,2 (B)):155\,--\,176, 2003.

\bibitem{Lewis96}
A.S. {Lewis} and M.L. {Overton}.
\newblock {Eigenvalue optimization.}
\newblock In {\em {Acta Numerica Vol. 5, 1996}}, pages 149\,--\,190. Cambridge:
  Cambridge University Press, 1996.

\bibitem{Mielke05JC}
A.~Mielke.
\newblock Necessary and suffcient conditions for polyconvexity of isotropic
  functions.
\newblock {\em J. Conv. Anal.}, 12(2):291\,--\,314, 2005.

\bibitem{Neff_Diss00}
P.~Neff.
\newblock {\em Mathematische {A}nalyse multiplikativer {V}iskoplastizit\"at.
  {P}h.{D}. {t}hesis, {Technische Universit\"at Darmstadt}.}
\newblock Shaker Verlag,
  ISBN:3\,-\,8265\,-\,7560\,-\,1,\texttt{https://www.uni\,-\,due.de/~hm0014/Download\_files/cism\_convexity08.pdf},
  Aachen, 2000.

\bibitem{HutterSFB02}
P.~Neff.
\newblock Some results concerning the mathematical treatment of finite
  multiplicative elasto\,-\,plasticity.
\newblock In K.~Hutter and H.~Baaser, editors, {\em {SFB}298: {D}eformation and
  failure in metallic and granular structures\,-\,{A}bschlussbericht},
  volume~10 of {\em Lecture Notes in Applied and Computational Mechanics},
  pages 251\,--\,274. Springer Verlag,
  \texttt{http://www.uni\,-\,due.de/imperia/md/content/mathematik/ag\_neff/neff\_hutter\_sfbtext03.pdf},
  2003.

\bibitem{NeffEidelOsterbrinkMartin_Riemannianapproach}
P.~{Neff}, B.~{Eidel}, F.~{Osterbrink}, and R.~{Martin}.
\newblock {A Riemannian approach to strain measures in nonlinear elasticity}.
\newblock {\em C. R. Acad. Sci.}, 342:254\,--\,257, 2014.

\bibitem{Neff_Osterbrink_Martin_hencky13}
P.~Neff, B.~Eidel, F.~Osterbrink, and R.~Martin.
\newblock {T}he isotropic {H}encky strain energy measures the geodesic distance
  of the deformation gradient {$F \in\mathrm{GL^+}(n)$} to $\mathrm{SO}(n)$ in
  the unique left invariant {R}iemannian metric on $\mathrm{GL}(n)$ which is
  also right $\mathrm{O}(n)$\,-\,invariant.
\newblock {\em in preparation}, 2014.

\bibitem{NeffGhibaPlasticity}
P.~Neff and I.D. Ghiba.
\newblock The exponentiated {H}encky\,-\,logarithmic strain energy. {P}art
  {III}: {A}pplications in finite strain plasticity.
\newblock {\em in preparation}.

\bibitem{NeffGhibaLankeit}
P.~Neff, I.D. Ghiba, and J.~Lankeit.
\newblock The exponentiated {H}encky-logarithmic strain energy. {P}art {I}:
  {C}onstitutive issues and rank--one convexity.
\newblock {\em Preprint arXiv:1403.3843}, 2014.

\bibitem{Neff_Nagatsukasa_logpolar13}
P.~Neff, Y.~Nakatsukasa, and A.~Fischle.
\newblock A logarithmic minimization property of the unitary polar factor in
  the spectral norm and the {Frobenius} matrix norm.
\newblock {\em to appear in SIAM J. Matrix Analysis}, 2014.

\bibitem{Ogden83}
R.W. Ogden.
\newblock {\em Non\,-\,{L}inear {E}lastic {D}eformations.}
\newblock Mathematics and its Applications. Ellis Horwood, Chichester, 1.
  edition, 1983.

\bibitem{Pipkin}
A.C. Pipkin.
\newblock Convexity conditions for strain\,-\,dependent energy functions for
  membranes.
\newblock {\em Archive for Rational Mechanics and Analysis},
  121(4):361\,--\,376, 1993.

\bibitem{Rosakis98}
P.~Rosakis.
\newblock Characterization of convex isotropic functions.
\newblock {\em J. Elasticity}, 49:257\,--\,267, 1998.

\bibitem{Rosakis92}
P.~{Rosakis} and H.~{Simpson}.
\newblock {On the relation between polyconvexity and rank\,-\,one convexity in
  nonlinear elasticity.}
\newblock {\em J. Elasticity}, 37:113\,--\,137, 1995.

\bibitem{Schroeder_Neff01}
J.~Schr\"oder and P.~Neff.
\newblock Invariant formulation of hyperelastic transverse isotropy based on
  polyconvex free energy functions.
\newblock {\em Int. J. Solids Struct.}, 40(2):401\,--\,445, 2003.

\bibitem{Schroeder_Neff_Iutam01}
J.~Schr\"oder and P.~Neff.
\newblock On the construction of polyconvex anisotropic free energy functions.
\newblock In C.~Miehe, editor, {\em {IUTAM}\,-\,{S}ymposium on {C}omputational
  {M}echanics of {S}olid {M}aterials at {L}arge {S}trains (in {S}tuttgart,
  2001)}, volume 108 of {\em Solid Mechanics and its Applications}, pages
  171\,--\,180. Kluwer Academic Publisher, 2003.

\bibitem{cism_book_schroeder_neff09}
J.~Schr\"oder and P.~Neff.
\newblock {\em Poly, quasi and rank\,-\,one convexity in mechanics}.
\newblock CISM\,-\,Course Udine. Springer, 2009.

\bibitem{Neff_Vortrag_Schroeder_IUTAM02}
J.~Schr\"oder and P.~Neff.
\newblock {\em {O}n the construction of polyconvex transversely isotropic free
  energy functions}.
\newblock IUTAM Symposium on Computational Mechanics of Solid Materials at
  Large Strains. Organizer: C. Miehe, University Stuttgart, 21. August 2001.

\bibitem{Schroeder_Neff04}
J.~Schr\"oder, P.~Neff, and D.~Balzani.
\newblock A variational approach for materially stable anisotropic
  hyperelasticity.
\newblock {\em Int. J. Solids Struct.}, 42(15):4352\,--\,4371, 2005.

\bibitem{Schroeder_Neff_Ebbing07}
J.~Schr\"oder, P.~Neff, and V.~Ebbing.
\newblock Anisotropic polyconvex energies on the basis of crystallographic
  motivated structural tensors.
\newblock {\em J. Mech. Phys. Solids}, 56(12):3486\,--\,3506, 2008.

\bibitem{Silhavy97}
M.~{\v{S}}ilhav{\`y}.
\newblock {\em The {M}echanics and {T}hermomechanics of {C}ontinuous {M}edia.}
\newblock Springer, Berlin, 1997.

\bibitem{Silhavy99b}
M.~{\v{S}}ilhav{\`y}.
\newblock Convexity conditions for rotationally invariant functions in two
  dimensions.
\newblock In Sequeira et~al., editor, {\em Applied Nonlinear Analysis}. Kluwer
  Academic Publisher, New\,-\,York, 1999.

\bibitem{SilhavyPRE99}
M.~{\v{S}}ilhav{\`y}.
\newblock On isotropic rank one convex functions.
\newblock {\em Proc. Roy. Soc. Edinburgh}, 129A:1081\,--\,1105, 1999.

\bibitem{vsilhavy2001rank}
M.~{\v{S}}ilhav{\`y}.
\newblock {Rank 1 convex hulls of isotropic functions in dimension 2 by 2}.
\newblock {\em Math. Bohemica}, 126(2):521\,--\,529, 2001.

\bibitem{silhavy2002monotonicity}
M.~{\v{S}}ilhav{\`y}.
\newblock Monotonicity of rotationally invariant convex and rank 1 convex
  functions.
\newblock {\em Proc. Roy. Soc. Edinburgh}, 132(2):419\,--\,435, 2002.

\bibitem{Silhavy02b}
M.~{\v{S}}ilhav{\`y}.
\newblock An {${\rm {O}}(n)$} invariant rank $1$ convex function that is not
  polyconvex.
\newblock {\em Theoret. Appl. Mech.}, 28:325\,--\,336, 2002.

\bibitem{Silhavy03}
M.~{\v{S}}ilhav{\`y}.
\newblock On {${\rm {SO}}(n)$}\,-\,invariant rank $1$ convex functions.
\newblock {\em J. Elasticity}, 71:235\,--\,246, 2003.

\bibitem{steigmann1990tension}
D.~Steigmann.
\newblock Tension\,-\,field theory.
\newblock {\em Proc. Roy. Soc. London. A: Math., Phys. Eng. Sci.},
  429(1876):141\,--\,173, 1990.

\bibitem{SteigmannMMS03}
D.~{Steigmann}.
\newblock {Frame\,-\,invariant polyconvex strain\,-\,energy functions for some
  anisotropic solids.}
\newblock {\em {Math. Mech. Solids}}, 8(5):497\,--\,506, 2003.

\bibitem{SteigmannQJ03}
D.~{Steigmann}.
\newblock {On isotropic, frame\,-\,invariant, polyconvex strain\,-\,energy
  functions.}
\newblock {\em {Q. J. Mech. Appl. Math.}}, 56(4):483\,--\,491, 2003.

\bibitem{TempletSteigman13}
G.~J. Templet and D.~J. Steigmann.
\newblock On the theory of diffusion and swelling in finitely deforming
  elastomers.
\newblock {\em Math. Mech. Complex Systems}, 1:pp. 105\,--\,128, 2013.

\end{thebibliography}
\addcontentsline{toc}{section}{References}
\begin{footnotesize}

\end{footnotesize}

\appendix
\section*{Appendix}\addcontentsline{toc}{section}{Appendix} \addtocontents{toc}{\protect\setcounter{tocdepth}{-1}}

\setcounter{section}{1}

\subsection{About some convexity and polyconvexity conditions in 3D}\setcounter{equation}{0}
A first relation of convexity properties in the different representations of the energy $W$ is given by:

\begin{lemma}\label{lem-martin}
Let $\Psi:\R_+^3\to\R$ and $\Phi:\R_+^7\to\R$ with
\begin{align}\label{cond-lem-martin}
	\Phi(\lambda_1,\lambda_2,\lambda_3,\mu_1,\mu_2,\mu_3,\delta) = \Psi(\lambda_1+\lambda_2+\lambda_3,\:\mu_1+\mu_2+\mu_3,\:\delta)
\end{align}
for all $\lambda_1,\lambda_2,\lambda_3,\mu_1,\mu_2,\mu_3,\delta\in \R_+$. Then $\Phi$ is convex if and only if $\Psi$ is convex.
\end{lemma}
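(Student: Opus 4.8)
The plan is to exploit that $\Phi$ is simply the composition $\Phi = \Psi\circ L$ of $\Psi$ with the linear map
\[
 L:\R_+^7\to\R_+^3,\qquad L(\lambda_1,\lambda_2,\lambda_3,\mu_1,\mu_2,\mu_3,\delta) = (\lambda_1+\lambda_2+\lambda_3,\ \mu_1+\mu_2+\mu_3,\ \delta),
\]
which is surjective. The forward implication is the standard fact that the precomposition of a convex function with an affine map is convex; I would spell this out in one line using linearity of $L$ and convexity of $\R_+^7$: for $x,y\in\R_+^7$ and $t\in[0,1]$, $\Phi(tx+(1-t)y) = \Psi(L(tx+(1-t)y)) = \Psi(tL(x)+(1-t)L(y)) \le t\Psi(L(x))+(1-t)\Psi(L(y)) = t\Phi(x)+(1-t)\Phi(y)$.

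For the converse I would use surjectivity of $L$ together with convexity of its domain. Given $a=(a_1,a_2,a_3)$ and $b=(b_1,b_2,b_3)$ in $\R_+^3$, pick explicit preimages, e.g.\ $x_a=(\tfrac{a_1}3,\tfrac{a_1}3,\tfrac{a_1}3,\tfrac{a_2}3,\tfrac{a_2}3,\tfrac{a_2}3,a_3)\in\R_+^7$ and similarly $x_b$, so that $L(x_a)=a$, $L(x_b)=b$. For $t\in[0,1]$, the point $tx_a+(1-t)x_b$ lies in $\R_+^7$ (convexity of the positive orthant), and by linearity $L(tx_a+(1-t)x_b)=ta+(1-t)b$. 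Hence
\[
 \Psi(ta+(1-t)b)=\Phi(tx_a+(1-t)x_b)\le t\,\Phi(x_a)+(1-t)\,\Phi(x_b)=t\,\Psi(a)+(1-t)\,\Psi(b),
\]
which is the desired convexity of $\Psi$ on $\R_+^3$.

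There is no real obstacle here; the only point requiring a moment's care is that both $\R_+^3$ and $\R_+^7$ are convex sets, so the convex combinations used above stay in the respective domains, and that $L$ maps $\R_+^7$ onto all of $\R_+^3$ (which is why a preimage can always be chosen). The same argument, mutatis mutandis, proves the planar analogue Lemma~\ref{lem-martin2} with the map $(\lambda_1,\lambda_2,\delta)\mapsto(\lambda_1+\lambda_2,\delta)$.
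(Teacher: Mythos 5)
Your proof is correct and is essentially the paper's argument: the paper also uses the splitting $\lambda_1=\lambda_2=\lambda_3=\frac{a}{3}$ (and similarly for the $\mu_i$) to lift points of $\R_+^3$ to $\R_+^7$, and exploits linearity of the map $(\lambda,\mu,\delta)\mapsto(\lambda_1+\lambda_2+\lambda_3,\mu_1+\mu_2+\mu_3,\delta)$ exactly as you do. The only cosmetic difference is that the paper proves the implication "$\Phi$ convex $\Rightarrow$ $\Psi$ convex" in contrapositive form, while you argue it directly via surjectivity; the computation is the same.
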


\begin{proof}
Assume that $\Psi$ is not convex. Then we can find $x,{\widehat{x}}\in \R_+^3$, $s\in[0,1]$ with
$
	\Psi(s\,x + (1-s)\,{\widehat{x}}) > s\,\Psi(x) + (1-s)\,\Psi({\widehat{x}})\,,
$
where $x,{\widehat{x}}$ have the form
 $
x=(a,b,\delta),\  {\widehat{x}}=({\widehat{a}},{\widehat{b}},{\widehat\delta})
$
with $a,b,\delta,{\widehat{a}},{\widehat{b}},\widehat{\delta}\in \R^+$. We choose $\lambda_1,\lambda_2,\lambda_3\in\R^+$ such that $\lambda_1+\lambda_2+\lambda_3=a$ (for example, choose $\lambda_1=\lambda_2=\lambda_3=\frac{a}{3}$) and, analogously, ${\widehat\lambda}_i$ as well as $\mu_i$  and $\widehat{\mu}_i$ with
$
	{\widehat\lambda}_1+{\widehat\lambda}_2+{\widehat\lambda}_3={\widehat{a}}\,,\  \mu_1+\mu_2+\mu_3=b\,,\ {\widehat\mu}_1+{\widehat\mu}_2+{\widehat\mu}_3={\widehat{b}}\,.
$

We define
$
	y=(\lambda_1,\lambda_2,\lambda_3,\mu_1,\mu_2,\mu_3,\delta), \ {\widehat{y}}=(\widehat{\lambda}_1,{\widehat\lambda}_2,{\widehat\lambda}_3,{\widehat\mu}_1,{\widehat\mu}_2,{\widehat\mu}_3,{\widehat\delta})
$
and find
\begin{align}
\Phi(s\,y+(1-s)\,{\widehat{y}})\notag
&= \Phi\Big(s\,\lambda_1+(1-s)\,{\widehat\lambda}_1,\: s\,\lambda_2+(1-s)\,{\widehat\lambda}_2,\: s\,\lambda_3+(1-s)\,{\widehat\lambda}_3,\label{eq:representationConvexityProofFails1}\\
	&\qquad\quad s\,\mu_1+(1-s)\,{\widehat\mu}_1,\: s\,\mu_2+(1-s)\,{\widehat\mu}_2,\: s\,\mu_3+(1-s)\,{\widehat\mu}_3, s\,\delta+(1-s)\,{\widehat\delta}\Big)\nonumber\\
	&= \Psi(s\,a+(1-s)\,{\widehat{a}},s\,b+(1-s)\,{\widehat{b}},s\,\delta+(1-s)\,{\widehat\delta})=
	 \Psi(s\,x + (1-s)\,{\widehat{x}})\\
&> s\,\Psi(x) + (1-s)\,\Psi({\widehat{x}})= s\,\Psi(a,b,\delta)+(1-s)\,\Psi({\widehat{a}},{\widehat{b}},{\delta})\nonumber\\
	&= s\,\Psi(\lambda_1+\lambda_2+\lambda_3,\: \mu_1+\mu_2+\mu_3,\: \delta) + (1-s)\, \Psi({\widehat\lambda}_1+{\widehat\lambda}_2+{\widehat\lambda}_3,\: {\widehat\mu}_1+{\widehat\mu}_2+{\widehat\mu}_3,\: {\widehat\delta})\nonumber\\
	&= s\,\Phi(\lambda_1,\lambda_2,\lambda_3,\mu_1,\mu_2,\mu_3,\delta) + (1-s)\,\Phi({\widehat\lambda}_1,{\widehat\lambda}_2,{\widehat\lambda}_3,{\widehat\mu}_1,{\widehat\mu}_2,{\widehat\mu}_3,{\widehat\delta})\nonumber\\
	&= s\,\Phi(y) + (1-s)\,\Phi({\widehat{y}})\,,\nonumber
\end{align}
and conclude that $\Phi$ is not convex.

\medskip

Now assume $\Psi$ to be convex. We use the same approach: for arbitrary
$
	y=(\lambda_1,\lambda_2,\lambda_3,\mu_1,\mu_2,\mu_3,\delta),$ \break $ {\widehat{y}}=({\widehat\lambda}_1,{\widehat\lambda}_2,{\widehat\lambda}_3,{\widehat\mu}_1,{\widehat\mu}_2,{\widehat\mu}_3,{\widehat\delta})
$
we find
\begin{align}
\Phi(s\,y+(1-s)\,{\widehat{y}})
	&= \Phi\Big(s\,\lambda_1+(1-s)\,{\widehat\lambda}_1,\: s\,\lambda_2+(1-s)\,{\widehat\lambda}_2),\: s\,\lambda_3+(1-s)\,{\widehat\lambda}_3,\nonumber\\
	&\qquad\quad s\,\mu_1+(1-s)\,{\widehat\mu}_1,\: s\,\mu_2+(1-s)\,{\widehat\mu}_2),\: s\,\mu_3+(1-s)\,{\widehat\mu}_3, s\,\delta+(1-s)\,{\widehat\delta}\Big)\nonumber\\
	&= \Psi\Big(s\,(\lambda_1+\lambda_2+\lambda_3)+(1-s)\,({\widehat\lambda}_1+{\widehat\lambda}_2+{\widehat\lambda}_3),\nonumber\\
	&\qquad\quad s\,(\mu_1+\mu_2+\mu_3)+(1-s)\,({\widehat\mu}_1+{\widehat\mu}_2+{\widehat\mu}_3),\: s\,\delta+(1-s)\,{\widehat\delta} \Big)\nonumber\\
	&\leq s\,\Psi(\lambda_1+\lambda_2+\lambda_3,\: \mu_1+\mu_2+\mu_3,\: \delta) + (1-s)\, \Psi({\widehat\lambda}_1+{\widehat\lambda}_2+{\widehat\lambda}_3,\: {\widehat\mu}_1+{\widehat\mu}_2+{\widehat\mu}_3,\: {\widehat\delta})\nonumber\\
	&= s\,\Phi(\lambda_1,\lambda_2,\lambda_3,\mu_1,\mu_2,\mu_3,\delta) + (1-s)\,\Phi({\widehat\lambda}_1,{\widehat\lambda}_2,{\widehat\lambda}_3,{\widehat\mu}_1,{\widehat\mu}_2,{\widehat\mu}_3,{\widehat\delta})\nonumber\\
	&= s\,\Phi(y) + (1-s)\,\Phi({\widehat{y}})\,,
\end{align}
hence $\Phi$ is convex as well.
\end{proof}

\begin{remark}\label{remA2}
We consider an isotropic energy function $W(\lambda_1,\lambda_2,\lambda_3)$ with
\begin{align}
	W(\lambda_1,\lambda_2,\lambda_3) &= \Phi(\lambda_1,\,\lambda_2,\,\lambda_3,\,(\lambda_2\lambda_3),\,(\lambda_1\lambda_3),\,(\lambda_1\lambda_2),\,(\lambda_1\lambda_2\lambda_3))
\label{eq:psiPhiAgreement3}\\
	&= \Psi(\lambda_1+\lambda_2+\lambda_3,\,(\lambda_2\lambda_3)+(\lambda_1\lambda_3)+(\lambda_1\lambda_2),\,(\lambda_1\lambda_2\lambda_3))\nonumber
\end{align}
for all $\lambda_1,\lambda_2,\lambda_3\in\R_+$ with functions $\Psi:\R_+^3\to\R$ and $\Phi:\R_+^7\to\R$. Then the functions $\Psi$ and $\Phi$ do not necessarily fulfil the conditions of the previous lemma, i.e. we do not have an equality  like in \eqref{cond-lem-martin}.

\end{remark}
\begin{proof}
 In order to prove this remark, let us observe that while the equality
\begin{equation}
	\Phi(\lambda_1,\lambda_2,\lambda_3,\mu_1,\mu_2,\mu_3,\delta) = \Psi(\lambda_1+\lambda_2+\lambda_3,\:\mu_1+\mu_2+\mu_3,\:\delta)\label{eq:phiPsiEqualityDefinition}
\end{equation}
holds if $\mu_1=\lambda_2\lambda_3,\:\mu_2=\lambda_1\lambda_3,\:\mu_3=\lambda_1\lambda_2$ and $\delta=\lambda_1\lambda_2\lambda_3$, it generally does \emph{not} hold for arbitrary $\lambda_1,\lambda_2,\lambda_3,\mu_1,\mu_2,\mu_3,\delta\in \R_+$. In particular, we cannot simply apply the proof of the lemma since the equalities  \eqref{eq:representationConvexityProofFails1}  depend on the fact that $\Psi$ and $\Phi$ are equal (in the sense of \eqref{eq:phiPsiEqualityDefinition}) in a point given as a convex combination of two points at which $\Psi$ and $\Phi$ are equal. Since the set
\begin{align}
	\{(\lambda_1,\lambda_2,\lambda_3,\mu_1,\mu_2,\mu_3,\delta)\in\R_+^7 \;|\; \mu_1=\lambda_2\lambda_3,\:\mu_2=\lambda_1\lambda_3,\:\mu_3=\lambda_2\lambda_3,\:\delta=\lambda_1\lambda_2\lambda_3\}
\end{align}
is not convex, we cannot apply this lemma to the general case given by \eqref{eq:psiPhiAgreement3}.

For example, consider the functions
$
	\Psi(a,b,c) \,=\, b^2-2ac\,, \  \Phi(\lambda_1,\lambda_2,\lambda_3,\mu_1,\mu_2,\mu_3,\delta) \,=\, \mu_1^2 + \mu_2^2 + \mu_3^2\,.
$
For $\mu_1=\lambda_2\lambda_3,\,\mu_2=\lambda_1\lambda_3,\,\mu_3=\lambda_2\lambda_3,\,\delta=\lambda_1\lambda_2\lambda_3$ we find
\begin{align*}
	\Psi(\lambda_1+\lambda_2+\lambda_3,&\,\mu_1+\mu_2+\mu_3,\,\delta)= (\mu_1+\mu_2+\mu_3)^2-2\delta(\lambda_1+\lambda_2+\lambda_3)\\
	&= (\lambda_2\lambda_3+\lambda_1\lambda_3+\lambda_2\lambda_3)^2 - 2(\lambda_1\lambda_2\lambda_3)(\lambda_1+\lambda_2+\lambda_3)\\
	&= \lambda_2^2\lambda_3^2 + \lambda_1^2\lambda_3^2 + \lambda_1^2\lambda_2^2 + 2 (\lambda_1^2\lambda_2\lambda_3 + \lambda_1\lambda_2^2\lambda_3 + \lambda_1\lambda_2\lambda_3^2) - 2(\lambda_1\lambda_2\lambda_3)(\lambda_1+\lambda_2+\lambda_3)\\
	&= \lambda_2^2\lambda_3^2 + \lambda_1^2\lambda_3^2 + \lambda_1^2\lambda_2^2 \,=\,\mu_1^2+\mu_2^2+\mu_3^2 \,=\, \Phi(\lambda_1,\lambda_2,\lambda_3,\mu_1,\mu_2,\mu_3,\delta)\,,
\end{align*}
but for $\lambda_i=1,\ \mu_i=2,\ \delta=1$ this equality no longer holds:
\begin{align*}
	\hspace{1cm}\Psi(1+1+1,\,2+2+2,\,1) = \Psi(3,6,1) = 6^2-6 = 30 \neq 12 = 2^2+2^2+2^2 = \Phi(1,1,1,2,2,2,1)\,.\hspace{2cm}\qedhere
\end{align*}
\end{proof}

Combining Lemma \ref{lem-martin} with a polyconvexity criterion given by Ball (see Theorem \ref{critBalleng3}) allows for a simple proof of another criterion proposed  originally by Steigmann (see Theorem \ref{sc1}).
\begin{proposition}
\label{prop:steigmann}  Let  $W:{\rm GL}^+(3)\rightarrow\mathbb{R}$  be an isotropic scalar function and let   $g(\lambda_1,\lambda_2,\lambda_3)$ be a symmetric real-valued function defined on $\mathbb{R}_+^3$ such that
$
W(F)=g(\lambda_1,\lambda_2,\lambda_3)
$
for all $F\in{\rm GL}^+(3)$, where $\lambda_1,\lambda_2,\lambda_3$ are the singular values of $F$.
Let $\psi:\R_+^3\to\R$ be \textbf{convex} and \textbf{nondecreasing} in the first two arguments with
\[
	g(\lambda_1,\lambda_2,\lambda_3)=\psi(\lambda_1+\lambda_2+\lambda_3,\,\lambda_1\lambda_2 + \lambda_1\lambda_3 + \lambda_2\lambda_3,\,\lambda_1\lambda_2\lambda_3)
\]
for all $\lambda_1,\lambda_2,\lambda_3\in\R_+\,$. Then $W$ is polyconvex.
\end{proposition}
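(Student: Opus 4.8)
The key point is that the representation of an isotropic $W$ in the seven variables $(\lambda_1,\lambda_2,\lambda_3,\lambda_2\lambda_3,\lambda_1\lambda_3,\lambda_1\lambda_2,\lambda_1\lambda_2\lambda_3)$ is \emph{not} unique (cf. Remark \ref{remA2}), so I am free to choose a convenient one. The plan is to pick the representation coming from $\psi$ through the ``strong'' identity of Lemma \ref{lem-martin} and then invoke Ball's criterion, Theorem \ref{critBalleng3}. Concretely, define $\Phi\colon\R_+^7\to\R$ by
\[
 \Phi(\lambda_1,\lambda_2,\lambda_3,\mu_1,\mu_2,\mu_3,\delta):=\psi(\lambda_1+\lambda_2+\lambda_3,\ \mu_1+\mu_2+\mu_3,\ \delta).
\]

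\textbf{Step 1 (convexity of $\Phi$).} Since $\psi$ is convex on $\R_+^3$ and $\Phi$ is obtained from it exactly by the substitution \eqref{cond-lem-martin}, Lemma \ref{lem-martin} applies verbatim (this time to $\Phi$ and $\psi$ themselves, not to the ``natural'' representation of $W$, which is the delicate point discussed in Remark \ref{remA2}) and yields that $\Phi$ is convex on $\R_+^7$.

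\textbf{Step 2 ($\Phi$ represents $W$).} Recall $i_1=\lambda_1+\lambda_2+\lambda_3$, $i_2=\tr(\Cof U)=\lambda_1\lambda_2+\lambda_2\lambda_3+\lambda_3\lambda_1$, $i_3=\det U=\lambda_1\lambda_2\lambda_3$, and that the eigenvalues of $\Cof U$ are $\lambda_2\lambda_3,\lambda_1\lambda_3,\lambda_1\lambda_2$. Hence
\[
 W(F)=g(\lambda_1,\lambda_2,\lambda_3)=\psi(i_1,i_2,i_3)=\Phi\big(\lambda_1,\lambda_2,\lambda_3,\ \lambda_2\lambda_3,\ \lambda_1\lambda_3,\ \lambda_1\lambda_2,\ \lambda_1\lambda_2\lambda_3\big),
\]
which is precisely the form required in Theorem \ref{critBalleng3}.

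\textbf{Step 3 (symmetry and monotonicity).} Because $\Phi$ depends on the $\lambda_i$ only through $\lambda_1+\lambda_2+\lambda_3$ and on the $\mu_i$ only through $\mu_1+\mu_2+\mu_3$, it is invariant under any permutation of $(\lambda_1,\lambda_2,\lambda_3)$ together with the induced permutation of $(\mu_1,\mu_2,\mu_3)$, i.e.\ it is symmetric in the sense of Theorem \ref{critBalleng3}. For monotonicity, since $\psi$ is nondecreasing in its first two arguments we get $\partial_{\lambda_j}\Phi=\partial_1\psi\geq 0$ and $\partial_{\mu_j}\Phi=\partial_2\psi\geq 0$ for $j=1,2,3$ (and if $\psi$ is not assumed differentiable, the same monotonicity follows at the order-theoretic level by composition); no monotonicity in the seventh slot $\delta$ is needed. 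Applying Theorem \ref{critBalleng3} to $\Phi$ gives $W(F)=P(F,\Cof F,\det F)$ with $P$ convex, i.e.\ $W$ is polyconvex.

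\textbf{Main obstacle.} There is no hard analytic step here; the only genuine subtlety is \emph{conceptual}: one must resist using the representation $\Phi$ naturally attached to $W$ (for which Lemma \ref{lem-martin} does not apply, as Remark \ref{remA2} shows) and instead build the representation from $\psi$, exploiting non-uniqueness. One should also double-check that the convexity hypothesis on $\psi$ is imposed on the convex set $\R_+^3$ (so no Busemann--Ewald--Shephard extension is needed) and that dropping monotonicity in $i_3$ is harmless for Ball's criterion. Everything else is routine.
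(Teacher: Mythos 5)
Your proposal is correct and follows essentially the same route as the paper's own proof: defining $\Phi(\lambda_1,\lambda_2,\lambda_3,\mu_1,\mu_2,\mu_3,\delta)=\psi(\lambda_1+\lambda_2+\lambda_3,\mu_1+\mu_2+\mu_3,\delta)$, obtaining convexity of $\Phi$ from Lemma \ref{lem-martin}, and then checking symmetry and monotonicity so that Ball's criterion (Theorem \ref{critBalleng3}) applies. Your extra remarks on the non-uniqueness of the representation and on why Remark \ref{remA2} is not an obstacle are a faithful elaboration of the same argument rather than a different approach.
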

\begin{proof}
We define $\Phi:\R_+^7\to\R$,
\[
	\Phi(\lambda_1,\lambda_2,\lambda_3,\mu_1,\mu_2,\mu_3,\delta) = \psi(\lambda_1+\lambda_2+\lambda_3,\,\mu_1+\mu_2+\mu_3,\, \delta)\,.
\]
According to Lemma \ref{lem-martin}, the function $\Phi$ is convex. Furthermore $\Phi$ is invariant under (separate) permutations  of $(\lambda_1,\lambda_2,\lambda_3)$ or $(\mu_1,\mu_2,\mu_3)$, and $\Phi$ is nondecreasing in each $\lambda_i,\mu_i\,$. Then Ball's criterion \ref{critBalleng3} shows that $W$ is polyconvex.
\end{proof}

\begin{remark}
Note carefully that Proposition \ref{prop:steigmann} assumes that the convex function $\psi$ is defined on all  $\R_+^3$, while the domain of $\psi$ is left ambiguous in Steigmann's criterion but clear from the context.
\end{remark}
The following lemma states a necessary condition for polyconvexity.
\begin{lemma}Let  $W:{\rm GL}^+(3)\rightarrow\mathbb{R}$  be an isotropic {polyconvex} function and let   $g(\lambda_1,\lambda_2,\lambda_3)$ be a symmetric real-valued function defined on $\mathbb{R}_+^3$ such that, for all $F\in{\rm GL}^+(3)$
$
W(F)=g(\lambda_1,\lambda_2,\lambda_3),
$
where $\lambda_1,\lambda_2,\lambda_3$ are the singular values of $F$. Then there exists a convex function $\Phi:\R^7\to\Rinf = \R\cup\{+\infty\}$ with
\[
	g(\lambda_1,\lambda_2,\lambda_3) = \Phi(\lambda_1, \;\lambda_2, \;\lambda_3, \;\lambda_1\lambda_2, \;\lambda_1\lambda_3, \;\lambda_2\lambda_3, \;\lambda_1\lambda_2\lambda_3)
\]
for all $\lambda_1,\lambda_2,\lambda_3\in\R_+\,$.
\end{lemma}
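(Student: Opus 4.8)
The plan is to exploit the definition of polyconvexity directly, without any symmetrization of the witnessing function. By hypothesis there is a convex function $P\colon\R^{3\times3}\times\R^{3\times3}\times\R\to\R$ (equivalently $P\colon\R^{19}\to\R$) with $W(F)=P(F,\Cof F,\det F)$ for every $F\in{\rm GL}^+(3)$. The idea is to obtain $\Phi$ as the composition of $P$ with a suitable linear embedding of $\R^7$ into $\R^{19}$ that sends the point $(\lambda_1,\lambda_2,\lambda_3,\lambda_1\lambda_2,\lambda_1\lambda_3,\lambda_2\lambda_3,\lambda_1\lambda_2\lambda_3)$ to the triple $(D,\Cof D,\det D)$ attached to the diagonal matrix $D=\diag(\lambda_1,\lambda_2,\lambda_3)$.

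First I would record the elementary algebraic fact that for $D=\diag(\lambda_1,\lambda_2,\lambda_3)$ one has $\Cof D=\diag(\lambda_2\lambda_3,\lambda_1\lambda_3,\lambda_1\lambda_2)$ and $\det D=\lambda_1\lambda_2\lambda_3$; moreover, whenever $\lambda_i>0$, $D\in{\rm GL}^+(3)$ and its singular values are precisely $\lambda_1,\lambda_2,\lambda_3$, so that $W(D)=g(\lambda_1,\lambda_2,\lambda_3)$ (using the symmetry of $g$). This reduces all the matrix bookkeeping to reading off three diagonal entries in each slot.

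Next I would define $\Phi\colon\R^7\to\R$ by
\[
 \Phi(x_1,x_2,x_3,x_4,x_5,x_6,x_7):=P\big(\diag(x_1,x_2,x_3),\,\diag(x_6,x_5,x_4),\,x_7\big),
\]
the reversed order $(x_6,x_5,x_4)$ in the second slot being dictated by the formula for $\Cof D$ above. Since the map $x\mapsto\big(\diag(x_1,x_2,x_3),\diag(x_6,x_5,x_4),x_7\big)$ is linear and $P$ is convex, $\Phi$ is convex, as a precomposition of a convex function with a linear map; in fact $\Phi$ is real-valued, which is stronger than the $\Rinf$-valued conclusion that is asked for. Evaluating at $x=(\lambda_1,\lambda_2,\lambda_3,\lambda_1\lambda_2,\lambda_1\lambda_3,\lambda_2\lambda_3,\lambda_1\lambda_2\lambda_3)$ and invoking the algebraic fact of the previous step yields $\Phi(x)=P(D,\Cof D,\det D)=W(D)=g(\lambda_1,\lambda_2,\lambda_3)$, which is exactly the claimed identity.

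There is no serious obstacle here: the whole argument is essentially the observation that a diagonal matrix realizes prescribed singular values while rendering its $2\times2$ and $3\times3$ minors completely transparent, combined with the triviality that convexity is preserved under linear precomposition. The only point requiring a little care is the indexing convention matching the seven arguments of $\Phi$ to the diagonal entries of $D$ and $\Cof D$ (hence the swap $x_4\leftrightarrow x_6$); and if one adopts a version of polyconvexity that allows $P$ to take the value $+\infty$, one simply notes that $\Phi=P\circ L$ is still convex and $\Rinf$-valued, and is proper since it is finite along the diagonal matrices.
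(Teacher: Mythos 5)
Your proof is correct and follows essentially the same route as the paper: define $\Phi$ by evaluating the polyconvexity witness $P$ on diagonal matrices, note that this is a precomposition with a linear embedding (the paper checks the convexity inequality by hand, which amounts to the same thing), and then evaluate along the point $(\lambda_1,\lambda_2,\lambda_3,\lambda_1\lambda_2,\lambda_1\lambda_3,\lambda_2\lambda_3,\lambda_1\lambda_2\lambda_3)$ so that the second slot becomes $\Cof\diag(\lambda_1,\lambda_2,\lambda_3)$. Your explicit reordering $(x_6,x_5,x_4)$ is in fact slightly more careful than the paper's write-up, which silently permutes the diagonal entries of the cofactor matrix.
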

\begin{proof}
Since $W$ is polyconvex, there exists a convex function
$
	P:\R^{3\times3}\times\R^{3\times3}\times\R\;\to\;\Rinf
$
with $P(F,\Cof F,\det F) = W(\lambda_1,\lambda_2,\lambda_3)$ for all $F\in{\rm GL}^+(3)$ with eigenvalues $\lambda_1,\lambda_2,\lambda_3\,$. We define
\[
	\Phi(\lambda_1,\lambda_2,\lambda_3,\mu_1,\mu_2,\mu_3,\delta) \;:=\; P\big({\rm diag}(\lambda_1,\lambda_2,\lambda_3),{\rm diag} (\mu_1,\mu_2,\mu_3),\; \delta\big)\,.
\]
Then the convexity of $P$ implies
\begin{align*}
	\Phi(s(\lambda_1,\lambda_2,\lambda_3,\mu_1,&\mu_2,\mu_3,\delta) + (1-s)(\lambdahat_1,\lambdahat_2,\lambdahat_3,\muhat_1,\muhat_2,\muhat_3,\deltahat))\\
	&=P\big(s\cdot{\rm diag}(\lambda_1,\lambda_2,\lambda_3) + (1-s)\cdot{\rm diag}(\widehat{\lambda}_1,\widehat{\lambda}_2,\widehat{\lambda}_3),\\&\qquad\; s\cdot{\rm diag}(\mu_1,\mu_2,\mu_3) + (1-s)\cdot {\rm diag}(\widehat{\mu}_1,\widehat{\mu}_2,\widehat{\mu}_3), s\,\delta + (1-s)\,\deltahat\big)\\
	&\leq s\:P\big({\rm diag}(\lambda_1,\lambda_2,\lambda_3),\; {\rm diag}(\mu_1,\mu_2,\mu_3),\; \delta\big) \:+\: (1-s)\:P\big({\rm diag}(\widehat{\lambda}_1,\widehat{\lambda}_2,\widehat{\lambda}_3),\; {\rm diag}(\widehat{\mu}_1,\widehat{\mu}_2,\widehat{\mu}_3),\; \deltahat\big)\\
	&= s\,\Phi(\lambda_1,\lambda_2,\lambda_3,\mu_1,\mu_2,\mu_3,\delta) + (1-s)\,\Phi(\lambdahat_1,\lambdahat_2,\lambdahat_3,\muhat_1,\muhat_2,\muhat_3,\deltahat)
\end{align*}
for all $s\in[0,1]$ and $\lambda_i,\mu_i,\delta,\lambdahat_i,\muhat_i,\deltahat\in\R_+$, thus $\Phi$ is convex.
Finally we find
\begin{align*}
	\quad\; \Phi(\lambda_1, \;\lambda_2, \;\lambda_3, \;&\lambda_1\lambda_2, \;\lambda_1\lambda_3, \;\lambda_2\lambda_3, \;\lambda_1\lambda_2\lambda_3)= P\big({\rm diag}(\lambda_1,\lambda_2,\lambda_3),\; {\rm diag}(\lambda_1\lambda_2,\lambda_2\lambda_3,\lambda_3\lambda_1),\; \lambda_1\lambda_2\lambda_3\big)\\
	&= P\big({\rm diag}(\lambda_1,\lambda_2,\lambda_3),\, \Cof{\rm diag}(\lambda_1,\lambda_2,\lambda_3),\, \det {\rm diag}(\lambda_1,\lambda_2,\lambda_3)\big) = g(\lambda_1,\lambda_2,\lambda_3)
\end{align*}
for all $\lambda_1,\lambda_2,\lambda_3\in\R_+\,$.
\end{proof}
The next theorem, which we have already mentioned, states sufficient conditions for polyconvexity of functions that have the same form.
\begin{theorem}{\rm(Ball \cite[page 367]{Ball77},  3D sufficient conditions for polyconvexity of isotropic functions)}\label{critBalleng3}\\
Let
$
W(F)=\Phi(\lambda_1,\lambda_2,\lambda_3, \lambda_2\lambda_3,\lambda_3\lambda_1,\lambda_1\lambda_2,\lambda_1\lambda_2\lambda_3),
$
where $\lambda_1,\lambda_2,\lambda_3$ are the singular values of $F\in {\rm GL}^+(3)$, and
\begin{itemize}
\item[a)]  $\Phi:\mathbb{R}_+^7\rightarrow\mathbb{R}$ is convex,
\item[b)] $\Phi(\widetilde{P}\,x,\overline{P}y,\delta)=\Phi(x,y,\delta)$ for all $\widetilde{P},\overline{P}\in \mathcal{P}_3$ (an element $\widetilde{P}$ of $\mathcal{P}_3$, acts
on a  vector  $v\in \R^3$ by permuting its entries) and all $x,y\in\mathbb{R}_+^3$, $\delta\in \R_+$,
\item[c)] $\Phi(x_1,x_2,x_3,y_1,y_2,y_3,\delta)$ is nondecreasing in each $x_i,y_j$, individually.
\end{itemize}
Then $W$ is polyconvex on ${\rm GL}^+(3)$.
\end{theorem}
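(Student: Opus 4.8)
The plan is to produce, directly, a convex lower-semicontinuous function $P\colon\R^{3\times3}\times\R^{3\times3}\times\R\to\Rinf$ with $W(F)=P(F,\Cof F,\det F)$; by Ball's definition this is precisely polyconvexity of $W$ on $\GL^+(3)$. The starting point is the elementary observation that the seven arguments fed into $\Phi$ are nothing but the singular values of $F$, the singular values of $\Cof F$, and $\det F$: if $F=Q_1\,\diag(\lambda_1,\lambda_2,\lambda_3)\,Q_2^T$ with $Q_1,Q_2\in\SO(3)$, then $\Cof F=Q_1\,\diag(\lambda_2\lambda_3,\lambda_3\lambda_1,\lambda_1\lambda_2)\,Q_2^T$ and $\det F=\lambda_1\lambda_2\lambda_3$. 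Hence, writing $\sigma(A)\in\R^3$ for the (unordered — harmless by hypothesis (b)) vector of singular values of a $3\times3$ matrix, the natural candidate is
\[
  P(A,B,\delta):=\bar\Phi\bigl(\sigma(A),\sigma(B),\delta\bigr),\qquad \bar\Phi(x,y,\delta):=\Phi(|x_1|,|x_2|,|x_3|,|y_1|,|y_2|,|y_3|,\delta),
\]
where $\bar\Phi\colon\R^3\times\R^3\times\R\to\R$ extends $\Phi$. Hypotheses (a) and (c) make $\bar\Phi$ convex (a convex function, nondecreasing in each slot, composed with the convex maps $x_i\mapsto|x_i|$), and (b) makes it invariant under permutations within the first block and within the second block; it is also even in each of its first six slots. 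With this definition $W(F)=P(F,\Cof F,\det F)$ holds by construction, so everything reduces to showing that $P$ is convex on $\R^{3\times3}\times\R^{3\times3}\times\R$.

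For the convexity of $P$ I would invoke the "spectral/von Neumann" circle of ideas. Since $\bar\Phi$ is convex, block-symmetric, and even in its first six slots, the Legendre--Fenchel identity $\bar\Phi=\bar\Phi^{**}$ together with the rearrangement inequality lets one write, for arguments whose first two blocks have sorted nonnegative coordinates,
\[
  \bar\Phi(x,y,\delta)=\sup_{(a,b,c)}\Bigl[\langle a,x\rangle+\langle b,y\rangle+c\,\delta-\bar\Phi^{*}(a,b,c)\Bigr],
\]
where the supremum may be restricted to multipliers with $a_1\ge a_2\ge a_3\ge0$ and $b_1\ge b_2\ge b_3\ge0$ (nonnegativity comes from evenness, the sorted order from block-symmetry plus the rearrangement inequality). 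Evaluating at $(\sigma(A),\sigma(B),\delta)$ gives $P(A,B,\delta)=\sup_{(a,b,c)}\bigl[\sum_i a_i\sigma_i(A)+\sum_i b_i\sigma_i(B)+c\,\delta-\bar\Phi^{*}(a,b,c)\bigr]$. For each fixed sorted nonnegative $a$, the Abel summation $\sum_{i=1}^{3}a_i\sigma_i(A)=\sum_{k=1}^{3}(a_k-a_{k+1})\sum_{i=1}^{k}\sigma_i(A)$ (with $a_4:=0$) exhibits $A\mapsto\sum_i a_i\sigma_i(A)$ as a nonnegative combination of the Ky Fan norms $A\mapsto\sum_{i=1}^{k}\sigma_i(A)$, each of which is a genuine matrix norm and hence convex; likewise for the $B$-term, while $c\,\delta$ is linear. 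Thus $P$ is a pointwise supremum of convex (indeed continuous) functions of $(A,B,\delta)$, hence convex and lower-semicontinuous, which completes the argument.

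The main obstacle is exactly the "key lemma" used above: that a convex, block-symmetric, coordinatewise-nondecreasing function of the singular values of two matrices is jointly convex in the matrices. Assembling its proof is where the real work lies, and it rests on three ingredients that must be combined carefully — von Neumann's singular value (trace) inequality $\langle A,B\rangle\le\sum_i\sigma_i(A)\sigma_i(B)$ with its equality case via simultaneous singular value decomposition, which is what makes the conjugate representation of $\bar\Phi$ transfer to $P$; the Ky Fan/Mirsky fact that partial sums of singular values are convex (they are norms); and the reduction of the supremum to \emph{sorted nonnegative} multipliers. It is here that the hypotheses bite: monotonicity (c) is what makes the $|\cdot|$-extension $\bar\Phi$ convex at all (drop it and a simple one-variable example like $t\mapsto(|t|-1)^2$ already fails convexity), while block-symmetry (b) is what permits the sorting needed to pair the multipliers with $\sigma(A)$ and $\sigma(B)$; without either, the Ky Fan norms would enter with coefficients of indefinite sign and convexity of $P$ would genuinely fail. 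A final, minor point to record is that no finiteness difficulty arises on $\GL^+(3)$: there $\det F>0$ and all singular values of $F$ and of $\Cof F$ are strictly positive, so $(F,\Cof F,\det F)$ always lies in the region where $P$ agrees with the finite, continuous convex function $\bar\Phi$, and the values $+\infty$ that $P$ may assume elsewhere are harmless (and in fact convenient for the direct method).
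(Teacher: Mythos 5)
The paper itself offers no proof of Theorem \ref{critBalleng3}: it is quoted verbatim from Ball \cite{Ball77} and used as a black box (e.g.\ in Proposition \ref{prop:steigmann}), so the only meaningful comparison is with the classical argument behind Ball's criterion — and your proposal essentially reconstructs it. The chain you set up is sound: hypothesis (c) is exactly what lets you pass from $\Phi$ to the ``absolutely symmetric'' extension $\bar\Phi$ via absolute values; evenness and block-symmetry are inherited by $\bar\Phi^{*}$, so together with the rearrangement inequality the Fenchel--Moreau supremum can indeed be restricted to sorted nonnegative multipliers; the Abel summation then writes $A\mapsto\sum_i a_i\sigma_i(A)$ as a nonnegative combination of Ky Fan norms, so $P(A,B,\delta)=\bar\Phi(\sigma(A),\sigma(B),\delta)$ is a pointwise supremum of jointly convex functions; and hypothesis (b) correctly absorbs the loss of the pairing between $\lambda_i$ and $\lambda_j\lambda_k$ when you check $W(F)=P(F,\Cof F,\det F)$, using $\sigma(\Cof F)=(\lambda_2\lambda_3,\lambda_3\lambda_1,\lambda_1\lambda_2)$ on ${\rm GL}^+(3)$. (The invocation of von Neumann's trace inequality is actually superfluous in your route: Ky Fan norm convexity plus the rearrangement step already carry the argument.)

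The one place where the write-up is looser than it should be is the domain bookkeeping. Since $\Phi$ is defined only on the open orthant $\mathbb{R}_+^7$, the displayed formula $\bar\Phi(x,y,\delta)=\Phi(|x_1|,|x_2|,|x_3|,|y_1|,|y_2|,|y_3|,\delta)$ is not literally meaningful when some $x_i$ or $y_j$ vanishes or when $\delta\leq 0$, and Fenchel--Moreau requires a proper, lower semicontinuous convex function on all of $\mathbb{R}^7$. This is repaired by standard means: extend $\Phi$ to the closure in the first six variables by monotone limits (these are finite, because convexity along a coordinate ray excludes the value $-\infty$), set the extension equal to $+\infty$ for $\delta\leq 0$, and pass to the lower semicontinuous hull, which agrees with $\bar\Phi$ at every point you actually evaluate, since for $F\in{\rm GL}^+(3)$ all singular values of $F$ and of $\Cof F$, as well as $\det F$, are strictly positive. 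Your closing remark gestures at this but the construction as written evaluates $\Phi$ outside its domain; with that technical sentence added, the proof is complete and is, in substance, the classical von Neumann/Ky Fan argument underlying Ball's theorem.
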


\newcommand{\eps}{\varepsilon}
\newcommand{\del}{\partial}
\subsection{A direct proof of the positive definiteness of the Hessian matrix $D^2\psi $ in the domain ${D(i_1,i_2)}$}\label{convdir}\setcounter{equation}{0}

In this appendix, we give  a direct proof that the function $\psi$ considered above satisfies the conditions of Proposition \ref{sc2} in the domain ${D(i_1,i_2)}$. These calculations have the disadvantage that a generalization to dimensions three or higher leads to very complicated expressions. On the other hand, they rely on elementary calculus only and are included for the convenience of all readers who prefer this. They may also help to provide an intuition, which expressions arise upon application of this theorem to an energy function and what manipulations might be helpful.

We will use these substitutions that we want to give an overview of at this point:
\begin{gather*}
 R=\sqrt{i_1^2-4i_2},\qquad z=\frac{R}{i_1},\qquad a=\frac{1+z}{1-z},
 a=e^\xi,\qquad s=\sinh\xi,\qquad  c=\cosh\xi,\qquad t=\tanh\xi.
\end{gather*}

\begin{lemma}
 \label{thm:monotone} The function $\psi:{D(i_1,i_2)}\rightarrow\mathbb{R}$, ${\psi}(i_1,i_2)=
\dd e^{\frac{k}{2}\log^2 \frac{i_1+\sqrt{i_1^2-4\,i_2}}{i_1-\sqrt{i_1^2-4\,i_2}}}$
is monotone increasing in $i_1$.
\end{lemma}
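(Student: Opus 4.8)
The plan is to differentiate $\psi$ with respect to $i_1$ directly and verify that every factor is positive on $D(i_1,i_2)$. First I would introduce the abbreviations $R=\sqrt{i_1^2-4\,i_2}$, which is strictly positive precisely because $(i_1,i_2)\in D(i_1,i_2)$, and $L=\log\frac{i_1+R}{i_1-R}$, so that $\psi(i_1,i_2)=e^{\frac{k}{2}L^2}$. Since $i_2>0$ forces $R<i_1$, we have $i_1-R>0$ and $\frac{i_1+R}{i_1-R}>1$, hence $L>0$; note also that $\frac{i_1\pm R}{2}=\lambda_{1,2}$ are the two singular values, so $L=\log\frac{\lambda_1}{\lambda_2}$, which makes the sign transparent.

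Next I would compute $\partial_{i_1}\psi=k\,e^{\frac{k}{2}L^2}\,L\,\partial_{i_1}L$, so that the only term requiring work is $\partial_{i_1}L$. Using $\partial_{i_1}R=i_1/R$ one gets $\partial_{i_1}\log(i_1+R)=\frac{1+i_1/R}{i_1+R}=\frac1R$ and $\partial_{i_1}\log(i_1-R)=\frac{1-i_1/R}{i_1-R}=-\frac1R$, whence $\partial_{i_1}L=\frac2R>0$. Consequently $\partial_{i_1}\psi=\frac{2k}{R}\,L\,e^{\frac{k}{2}L^2}>0$ throughout $D(i_1,i_2)$, because $k\geq\frac13>0$, $L>0$ and $R>0$; this is exactly the asserted monotonicity of $\psi$ in $i_1$.

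There is essentially no obstacle: the computation is elementary and the positivity of each factor follows at once from the defining inequalities of $D(i_1,i_2)$. If one prefers a substitution-based argument (matching the overview of substitutions used elsewhere), one may instead set $z=R/i_1=\sqrt{1-4\,i_2/i_1^2}$ and $a=\frac{1+z}{1-z}$, observe that for fixed $i_2>0$ the quantity $4\,i_2/i_1^2$ is decreasing in $i_1$ so that $z$, and hence $a>1$, and hence $\log^2 a$, is increasing in $i_1$, and conclude the monotonicity of $\psi=e^{\frac{k}{2}\log^2 a}$ by composition with the increasing exponential; but I would record the short direct derivative computation above.
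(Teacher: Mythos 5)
Your computation is exactly the paper's proof: the authors also differentiate directly and obtain $\frac{\partial\psi}{\partial i_1}=\frac{2k\,\log\frac{i_1+R}{i_1-R}}{R}\,e^{\frac{k}{2}\log^2\frac{i_1+R}{i_1-R}}\geq 0$ on $D(i_1,i_2)$, which is precisely your $\frac{2k}{R}L\,e^{\frac{k}{2}L^2}$. Your write-up just spells out the sign of each factor (and the optional substitution remark) in more detail than the paper does, so it is correct and takes essentially the same route.
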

{\it Proof.} As long as $i_1>\sqrt{4\,i_2}$,  for each $(i_1,i_2)$, we have
$\frac{\del\psi}{\del i_1}=\frac{2\, k\,\log \frac{i_1+R}{i_1-R}}{R}e^{\frac{k}{2}\log^2\frac{i_1+R}{i_1-R}}\geq 0.$\hfill$\Box$

\begin{lemma}
\label{thm:b}
The inequality $k-\frac{1}{\log^2 a}+\frac{4\,a^2}{(a^2-1)^2}\geq 0$ holds true for all $a\geq 1$ if and only if $k\geq\dd\tel3$. If $k<\dd\tel3$ then
\(
 \lim_{a\searrow 1}\left[k-\frac{1}{\log^2 a}+\frac{4\,a^2}{(a^2-1)^2}\right]<0.
\)
\end{lemma}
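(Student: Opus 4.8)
The plan is to collapse the inequality into a single one-variable monotonicity statement by means of the substitutions already listed. For $a>1$ set $\xi=\log a>0$; then $\log^2 a=\xi^2$, and dividing numerator and denominator of $\frac{4a^2}{(a^2-1)^2}$ by $a^2=e^{2\xi}$ gives $\frac{4a^2}{(a^2-1)^2}=\frac{4}{(e^{\xi}-e^{-\xi})^2}=\frac{1}{\sinh^2\xi}$. Hence the quantity in the lemma equals $k-g(\xi)$ where
\[
g\colon(0,\infty)\to\mathbb{R},\qquad g(\xi)=\frac{1}{\xi^2}-\frac{1}{\sinh^2\xi},
\]
and the whole statement reduces to showing that $\sup_{\xi>0}g(\xi)=\frac{1}{3}$, this supremum being approached as $\xi\searrow 0$ but not attained. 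Two of the three ingredients are immediate: from $\sinh\xi=\xi+\frac{\xi^3}{6}+o(\xi^3)$ one gets $\frac{1}{\sinh^2\xi}=\frac{1}{\xi^2}-\frac{1}{3}+o(1)$, so $g(\xi)\to\frac{1}{3}$ as $\xi\searrow 0$, while $g(\xi)\to 0$ as $\xi\to\infty$. So the heart of the matter is the claim that $g$ is strictly decreasing on $(0,\infty)$. Granting it, $g$ maps $(0,\infty)$ onto $(0,\frac{1}{3})$, and the lemma follows: if $k\geq\frac{1}{3}$ then $k-g(\xi)>0$ for all $\xi>0$ and $k-\frac{1}{3}\geq 0$ in the limit $a\searrow 1$, so the inequality holds for all $a\geq 1$; if $k<\frac{1}{3}$ then $\lim_{a\searrow 1}\big[k-\frac{1}{\log^2 a}+\frac{4a^2}{(a^2-1)^2}\big]=k-\frac{1}{3}<0$, which is the final assertion of the lemma and, by continuity, also shows that the inequality fails for $a$ slightly larger than $1$.

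For the monotonicity I would compute $g'(\xi)=2\big(\frac{\cosh\xi}{\sinh^3\xi}-\frac{1}{\xi^3}\big)$, so that $g'<0$ on $(0,\infty)$ is equivalent to $\sinh^3\xi>\xi^3\cosh\xi$ there. Since both sides are positive, pass to logarithms: put $h(\xi)=3\log\frac{\sinh\xi}{\xi}-\log\cosh\xi$, so $h(\xi)\to 0$ as $\xi\searrow 0$, and it suffices to prove $h'>0$ on $(0,\infty)$. Now $h'(\xi)=3\big(\coth\xi-\frac{1}{\xi}\big)-\tanh\xi$; multiplying by the positive factor $\xi\sinh\xi\cosh\xi$, using $\coth\xi-\frac{1}{\xi}=\frac{\xi\cosh\xi-\sinh\xi}{\xi\sinh\xi}$ together with the identities $3\cosh^2\xi-\sinh^2\xi=2+\cosh(2\xi)$ and $2\sinh\xi\cosh\xi=\sinh(2\xi)$, one finds that $h'(\xi)$ has the same sign as $F(2\xi)$, where
\[
F(\eta):=2\eta+\eta\cosh\eta-3\sinh\eta .
\]
This last function I would dispatch by repeated differentiation: $F(0)=0$, $F'(\eta)=2-2\cosh\eta+\eta\sinh\eta$ with $F'(0)=0$, $F''(\eta)=\eta\cosh\eta-\sinh\eta$ with $F''(0)=0$, and $F'''(\eta)=\eta\sinh\eta>0$ for $\eta>0$; integrating this inequality successively from $0$ yields $F''>0$, then $F'>0$, then $F>0$ on $(0,\infty)$. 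This gives $h'>0$, hence $h>0$, hence $\sinh^3\xi>\xi^3\cosh\xi$, hence $g'<0$, which completes the argument.

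I expect the main obstacle to be purely organizational: getting the reduction of $h'>0$ to $F(\eta)>0$ right — clearing exactly the right positive denominators and applying the double-angle identities without sign slips — because everything after $F$ is isolated is automatic, and the asymptotics of $g$ at $0$ and $\infty$ are routine. An alternative route to the monotonicity, should one prefer it, is to expand $\log\frac{\sinh\xi}{\xi}=\sum_{n\geq 1}\log\!\big(1+\frac{\xi^2}{n^2\pi^2}\big)$ and $\log\cosh\xi=\sum_{n\geq 1}\log\!\big(1+\frac{4\xi^2}{(2n-1)^2\pi^2}\big)$ and to compare the two series termwise, using $(1+u)^4\geq 1+4u$ on the odd indices and again on the even ones; but the elementary one-variable computation above is shorter and stays within the calculus-only spirit of this appendix.
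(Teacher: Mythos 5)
Your proof is correct and takes essentially the same route as the paper: after the substitution $a=e^\xi$ both arguments rest on the limiting value $-\tfrac13$ of $\tfrac{1}{\sinh^2\xi}-\tfrac{1}{\xi^2}$ as $\xi\searrow0$ and on the monotonicity of that function, i.e.\ on exactly the same key inequality $\sinh^3\xi\geq\xi^3\cosh\xi$ (the paper's $\xi^3\leq \sinh^2\xi\,\tanh\xi$), which the paper likewise settles by successive differentiation starting from equality at $\xi=0$ (ending in the cubic $4c^6-5c^4-2c^2+3\geq0$ with $c=\cosh\xi$), whereas you first take logarithms and reduce to $F(\eta)=2\eta+\eta\cosh\eta-3\sinh\eta$ with $F(0)=F'(0)=F''(0)=0$ and $F'''(\eta)=\eta\sinh\eta>0$ --- a slightly tidier packaging of the same idea. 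The only caveat is your optional infinite-product aside, which as sketched does not work as a termwise comparison (for instance $(1+u)^3\geq 1+4u$ fails for small $u>0$, so matching the two series index by index is not enough); since your main argument never uses it, this does not affect correctness.
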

\begin{proof}
 Upon the substitution $a=e^\xi$, this expression becomes
 \(
  k-\frac{1}{\log^2 a}+\frac{4\,a^2}{(a^2-1)^2}=k-\tel{\xi^2}+(\tel{\sinh(\xi)})^2\quad  \forall\, \xi\geq 0.
 \)
 We therefore compute (abbreviating $s=\sinh\xi, c=\cosh\xi, t=\tanh\xi$)
 \begin{align*}
  \limxin\left[ -\tel{\xi^2}+(\tel{\sinh(\xi)})^2\right] &=\limxin \frac{\xi^2-s^2}{\xi^2\,s^2}
  =\limxin\frac{2\,\xi-2\,s\,c}{2\,\xi\, s^2+2\,\xi^2\,s\,c}=\limxin\frac{\xi-s\,c}{\xi \,s^2+\xi^2\,s\,c}\\
  &=\limxin\frac{1-c^2-s^2}{s^2+2\,\xi\, s\,c+2\xi \,s\,c+\xi^2\,c^2+\xi^2\,s^2}=\limxin -\frac{2\,s^2}{s^2+4\,\xi \,s\,c+2\,\xi^2\,s^2+\xi^2}\\&
   =\limxin -\frac{2}{1+4\,c\,\frac{\xi}{s}+2\xi^2+\frac{\xi^2}{s^2}}=-\tel3.
 \end{align*}
 We claim that the derivative of this expression $\tel{\sinh^2 \xi}-\tel{\xi^2}$ is positive, i.e.
 \( -2\,s^{-3}\,c+2\,\xi^{-3}\geq 0\)
 or, equivalently,
 \(
  \xi^3\leq s^2\,t.
 \)
 At $\xi=0$, this inequality holds: $0\leq 0$. Hence it is sufficient to compare
 the corresponding derivatives in the same way:
 \(
  3\,\xi^2\leq 2\,s\,c\,t+s^2\,\tel{c^2}=2\,s^2+t^2.
 \)
 Again, differentiating, we obtain
 \(
  6\,\xi\leq 4\,s\,c+2\,t\,\tel{c^2},\) and \(
  3\,\xi\leq 2\,s\,c+\frac{s}{c^3}
 \)
 And again: At $\xi=0$ this is $0\leq 0$ and comparison of the derivatives gives
 \[
  3\leq 2\,c^2+2\,s^2+\frac{c^4-s^2\,3\,c^2}{c^6}
 , \qquad
  3\leq 2\,c^2+2\,c^2-2+\tel{c^2}-3\,\frac{c^2}{c^4}+3\tel{c^4}
, \qquad
 5\leq 4\,c^2-2\tel{c^2}+\frac{3}{c^4}\,.
\]
We need
\(
 0\leq4\,c^6-5\,c^4-2\,c^2+3
\)
for all $\xi\geq 0$ and hence, so to say, for all $c\geq 1$.
Inspection of the polynomial $4\,y^3-5\,y^2-2\,y+3$ shows that it has a local minimum at $y=1$ and
\(
 4\,y^3-5\,y^2-2\,y+3\geq 4-5-2+3=0
\)
holds true for $y\geq 1$.
\end{proof}

\begin{lemma}
\label{thm:unglt}
 The inequality
 \(
  t(a):=\frac{1}{\log a}+k\log a-\tel2(\frac{a-1}{a+1}+\frac{a+1}{a-1})\geq 0
 \)
 holds for all $a\geq 1$ if and only if $k>\tel3$.
\end{lemma}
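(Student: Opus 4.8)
The plan is to rewrite the inequality in the variable $\xi:=\log a$ and then read off both implications from Lemma~\ref{thm:b}. First I would use the substitution $a=e^\xi$, $\xi\ge 0$, together with the elementary identity
\[
 \frac{a-1}{a+1}+\frac{a+1}{a-1}=\frac{2(a^2+1)}{a^2-1}=2\,\frac{e^{2\xi}+1}{e^{2\xi}-1}=2\coth\xi ,
\]
so that $t(e^\xi)=f(\xi)$ with $f(\xi):=\tel\xi+k\,\xi-\coth\xi$. Thus the claim ``$t(a)\ge0$ for all $a\ge1$'' becomes ``$f(\xi)\ge 0$ for all $\xi>0$'', the value $a=1$ being understood only in the limiting sense, since both $\tel{\log a}$ and $\tfrac{a+1}{a-1}$ diverge there.

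Next I would record the two facts about $f$ that do all the work. On the one hand $\limxin f(\xi)=0$, which follows from the Laurent expansion $\coth\xi=\tel\xi+\tfrac\xi3-\tfrac{\xi^3}{45}+\dots$ near $\xi=0$, or from a repeated-differentiation computation applied to $\tfrac{\sinh\xi-\xi\cosh\xi}{\xi\sinh\xi}$, very much like the one in the proof of Lemma~\ref{thm:b}. On the other hand
\[
 f'(\xi)=-\tel{\xi^2}+k+\tel{\sinh^2\xi},
\]
and under $a=e^\xi$ one has $a^2-1=2e^\xi\sinh\xi$, hence $\tel{\sinh^2\xi}=\dfrac{4a^2}{(a^2-1)^2}$ and $\tel{\xi^2}=\tel{\log^2 a}$, so $f'(\xi)$ is \emph{precisely} the quantity $k-\tel{\log^2a}+\dfrac{4a^2}{(a^2-1)^2}$ whose sign is characterised in Lemma~\ref{thm:b}.

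With these two observations the two directions follow at once. If $k\ge\tel3$ (in particular for the claimed range $k>\tel3$), Lemma~\ref{thm:b} gives $f'(\xi)\ge0$ for all $\xi>0$, so $f$ is nondecreasing on $(0,\infty)$ and therefore $f(\xi)\ge\limxin f(\xi)=0$; equivalently $t(a)\ge0$ for all $a>1$. Conversely, if $k<\tel3$, the last assertion of Lemma~\ref{thm:b} yields $\limxin f'(\xi)=k-\tel3<0$, so $f'<0$ on some interval $(0,\varepsilon)$, and since $f(0^+)=0$ this forces $f(\xi)<0$ on $(0,\varepsilon)$, i.e.\ $t(a)<0$ for $a$ close to $1$, and the inequality fails. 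A second, even shorter route would be to note that $t(a)=\dfrac{\widehat r(a)}{(a^2-1)\log a}$ with $\widehat r$ as in the proof of Proposition~\ref{prop:posdefHessian}, which reduces the statement to the sign of $\widehat r$ on $(1,\infty)$ settled there.

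All the genuine analysis is packaged inside Lemma~\ref{thm:b}; granting it, the present lemma is just the fundamental theorem of calculus applied to $f$. The only delicate point — and the one place a real estimate is used — is the behaviour near $a=1$: one must establish $f(0^+)=0$ before one may compare $f$ with its infimum, and the threshold $\tel3$ is then inherited verbatim from the boundary value $\limxin f'(\xi)=k-\tel3$.
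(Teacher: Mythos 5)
Your proof is correct and follows essentially the same route as the paper's: both establish $t(1^+)=0$, identify the derivative of $t$ (in your case of $f(\xi)=t(e^\xi)$) with the expression characterized in Lemma \ref{thm:b}, and obtain the positive direction by monotonicity from the zero boundary value and the converse from the negative limit of the derivative near $a=1$. Your substitution $a=e^\xi$ with the $\coth$ expansion, and the observation that $t(a)=\widehat{r}(a)/\bigl((a^2-1)\log a\bigr)$, are only cosmetic variants of the computations already in the paper.
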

\begin{proof}
We  can rewrite
 \(
  t(a)=\tel{\log a}+k\log a-\frac{a^2+1}{a^2-1}.
 \)
Using the substitution $\alpha=a^2\searrow 1$, we have
 \begin{align*}
  \limae t(a)&=\limale(\tel{\tel2\log
\al}-\frac{\al+1}{\al-1})=\limale\frac{2\al-2-\al\log \al-\log \al}{(\al-1)\log
\al}\\
  &=\limale\frac{2-\log \al-\frac{\al}{\al}-\tel{\al}}{\log
\al+\frac{\al-1}{\al}}=\limale \frac{1-\tel{\al}-\log \al}{\log \al+1-\tel
\al}=\limae\frac{\tel{\al^2}-\tel{\al}}{\tel \al+\tel{\al^2}}=0.
 \end{align*}
 Moreover,
\(  t'(a)=-\frac{1}{\log^2 a}\tel{a}+k\tel{a}-\frac{2a(a^2-1)-(a^2+1)2a}{(a^2-1)^2}\geq 0\), which is the same as \(k-\tel{\log^2a}+\frac{4a^2}{(a^2-1)^2}\geq 0.\)
Hence, in view of Lemma \ref{thm:b} we have $t'(a)\geq 0$ for all $a\geq 1$ if $k\geq\tel3$. If $k<\tel3$, by Lemma \ref{thm:b}, $\limae t'(a)<0$ which together with $\limae t(a)=0$ implies negativity of $t(a)$ on some interval $(0,\eps)$.
\end{proof}

\begin{lemma}
\label{thm:ungl7}
The inequality
 \(
  -(i_1^2+R^2)\log\frac{i_1+R}{i_1-R}+2\,i_1\,R\left(1+k\log^2\frac{i_1+R}{i_1-R}\right)\geq 0
 \)
 holds for arbitrary $i_1\geq R\geq 0$ if and only if $k\geq \tel3$.
\end{lemma}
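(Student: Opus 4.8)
The plan is to reduce this inequality, via the substitutions $R=\sqrt{i_1^2-4i_2}$, $z=R/i_1$, $a=\frac{1+z}{1-z}$ already set up in the appendix, directly to Lemma \ref{thm:unglt}. First I would dispose of the degenerate cases. If $R=0$ (in particular if $i_1=0$) the left-hand side is identically $0$, so the claim is trivial. If $i_1=R>0$ the logarithm is infinite and the $\log^2$-term dominates, forcing the expression to $+\infty$; nothing is lost there either. Hence it suffices to treat $i_1>R>0$.

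Next, write $a:=\frac{i_1+R}{i_1-R}>1$ and $z:=\frac{R}{i_1}\in(0,1)$, so that $z=\frac{a-1}{a+1}$ and $\log a=\log\frac{i_1+R}{i_1-R}$. Dividing the inequality by $i_1^2>0$ turns it into
\[
-(1+z^2)\log a+2z\bigl(1+k\log^2 a\bigr)\ge 0.
\]
Now divide by $2z>0$ and then by $\log a>0$; both divisions are by strictly positive quantities, hence sign-preserving. Using the elementary identity $\frac{1+z^2}{2z}=\frac{a^2+1}{a^2-1}=\frac12\bigl(\frac{a-1}{a+1}+\frac{a+1}{a-1}\bigr)$, the inequality becomes exactly
\[
t(a):=\frac{1}{\log a}+k\log a-\frac12\left(\frac{a-1}{a+1}+\frac{a+1}{a-1}\right)\ge 0,
\]
i.e.\ the assertion of Lemma \ref{thm:unglt}. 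Moreover, as $(i_1,R)$ ranges over $\{i_1>R>0\}$, $a$ ranges over all of $(1,\infty)$ (given $a_0>1$, take $i_1=a_0+1$, $R=a_0-1$), so the two statements are genuinely equivalent.

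It then remains to invoke Lemma \ref{thm:unglt}: $t(a)\ge 0$ for all $a>1$ precisely when $k\ge\tel3$. For the sufficiency recall from the proof of that lemma that $\limae t(a)=0$ and, by Lemma \ref{thm:b}, $t'(a)\ge 0$ on $[1,\infty)$ whenever $k\ge\tel3$, which already yields non-negativity at the threshold $k=\tel3$. For the necessity, if $k<\tel3$ then Lemma \ref{thm:b} gives $\limae t'(a)<0$, so $t$, starting from $t(1^+)=0$, becomes negative on a right-neighbourhood of $1$; translating back, the original inequality fails for small $R/i_1$. This completes the argument. The main point requiring care is not analytic — Lemmas \ref{thm:b} and \ref{thm:unglt} carry all the weight — but bookkeeping: verifying that each division is by a positive quantity, confirming the algebraic identity that brings $\frac{1+z^2}{2z}$ into the symmetric form appearing in $t(a)$, and matching the (non-strict) threshold $k\ge\tel3$ with what the proof of Lemma \ref{thm:unglt} actually delivers.
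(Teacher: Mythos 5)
Your proposal is correct and follows essentially the same route as the paper: divide by $i_1^2$, set $z=R/i_1$, divide by $2z\log\frac{1+z}{1-z}>0$, and reduce to $t\bigl(\frac{1+z}{1-z}\bigr)\geq 0$, i.e.\ to Lemma \ref{thm:unglt}. Your extra bookkeeping (degenerate cases $R=0$, $i_1=R$, surjectivity of $(i_1,R)\mapsto a$ onto $(1,\infty)$ for the necessity direction, and the non-strict threshold $k\geq\tel3$ actually delivered by the proofs of Lemmas \ref{thm:b} and \ref{thm:unglt}) is sound and only makes explicit what the paper leaves implicit.
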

\begin{proof}
 After division by $i_1^2$, cancelling of $i_1$ in the arguments of the logarithms and denoting $z=\frac{R}{i_1}$, this is equivalent to
 \[
  -(1+z^2)\log\frac{1+z}{1-z}+2\,z\,\left(1+k\log^2\frac{1+z}{1-z}\right)\geq 0 \qquad \forall\; z\in (0,1)
 \]
 and hence, dividing by $2\,z\,\log\frac{1+z}{1-z}$, to
 \(
\frac{1}{\log\frac{1+z}{1-z}}+k\log\frac{1+z}{1-z}-\tel2(z+\tel{z})=t(\frac{
1+z}{1-z})\geq 0,
 \)
 which (after substitution $a=\frac{1+z}{1-z}$) holds true for all $z\in(0,1)$
by Lemma \ref{thm:unglt} if and only if $k\geq \tel3$.
\end{proof}

\begin{lemma}
 \label{thm:ungl8}
 If and only if $k\geq\tel3$, the following inequality holds for all $i_1\geq 0$, $0\leq i_2\leq\frac{i_1^2}{4}$
 \begin{equation}
 \label{eq:thmungl8}
  2\,i_2\log\frac{i_1+R}{i_1-R}+k \,i_1\log^2\left(\frac{i_1+R}{i_1-R}\right)R+i_1R-i_1^2\log\frac{i_1+R}{i_1-R}\geq 0
 \end{equation}
\end{lemma}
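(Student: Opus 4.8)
The plan is to reduce \eqref{eq:thmungl8} directly to Lemma \ref{thm:ungl7} by trading the variable $i_2$ for $R$. Since $R=\sqrt{i_1^2-4i_2}$, the constraint $0\le i_2\le\tfrac{i_1^2}{4}$ is equivalent to $0\le R\le i_1$, and $(i_1,i_2)\mapsto(i_1,R)$ is a bijection between the parameter set $\{i_1\ge 0,\ 0\le i_2\le i_1^2/4\}$ and $\{i_1\ge R\ge 0\}$ (with the degenerate corner $i_1=R=0$ treated exactly as in Lemma \ref{thm:ungl7}). In particular, the universally quantified variables in \eqref{eq:thmungl8} range over precisely the set over which Lemma \ref{thm:ungl7} quantifies.

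First I would substitute $2i_2=\tfrac12(i_1^2-R^2)$ into the left-hand side of \eqref{eq:thmungl8}. Abbreviating $L:=\log\frac{i_1+R}{i_1-R}$, the two terms linear in $L$ combine to $(2i_2-i_1^2)L=\big(\tfrac12(i_1^2-R^2)-i_1^2\big)L=-\tfrac12(i_1^2+R^2)L$, so the whole left-hand side of \eqref{eq:thmungl8} becomes
\[
 -\tfrac12(i_1^2+R^2)\,L + k\,i_1 R\,L^2 + i_1 R \;=\; \tfrac12\Big[-(i_1^2+R^2)\,L + 2\,i_1 R\big(1+k\,L^2\big)\Big],
\]
which is exactly one half of the expression appearing in Lemma \ref{thm:ungl7}. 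Multiplication by the positive constant $\tfrac12$ does not affect the sign, so \eqref{eq:thmungl8} holds for all admissible $(i_1,i_2)$ if and only if $-(i_1^2+R^2)\log\frac{i_1+R}{i_1-R}+2\,i_1 R\big(1+k\log^2\frac{i_1+R}{i_1-R}\big)\ge 0$ for all $i_1\ge R\ge 0$.

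Then I would simply invoke Lemma \ref{thm:ungl7}, which states that the latter inequality holds for all $i_1\ge R\ge 0$ precisely when $k\ge\tfrac13$. Both implications — sufficiency for $k\ge\tfrac13$ and failure for $k<\tfrac13$ — transfer verbatim because Lemma \ref{thm:ungl7} is itself an equivalence and the change of variables is a genuine bijection. I expect no real obstacle here: the content of the lemma is entirely contained in Lemma \ref{thm:ungl7}, and the only point requiring a line of care is checking that the substitution matches the quantifier ranges on both sides, after which \eqref{eq:thmungl8} is literally a rescaled restatement.
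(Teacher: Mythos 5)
Your proof is correct and is essentially identical to the paper's own argument: the paper likewise substitutes $i_2=\frac{i_1^2-R^2}{4}$ into \eqref{eq:thmungl8}, observes that the inequality is (up to a positive factor) exactly the inequality of Lemma \ref{thm:ungl7} on the range $i_1\geq R\geq 0$, and concludes by that lemma. Your extra remark that both directions of the equivalence transfer through the bijective change of variables is a slightly more explicit treatment of the ``if and only if'' than the paper gives, but it is the same proof.
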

\begin{proof}
 The substitution $R^2=i_1^2-4\, i_2$ implies $i_2=\frac{i_1^2-R^2}{4}$. Hence, the expression \eqref{eq:thmungl8} becomes
 \begin{align*}
  &2\frac{i_1^2-R^2}{4}\log\frac{i_1+R}{i_1-R}+k \,i_1R\log^2\frac{i_1+R}{i_1-R}+i_1R-i_1^2\log\frac{i_1+R}{i_1-R}\geq 0\\
  &\iff-(i_1^2+R^2)\log\frac{i_1+R}{i_1-R}+2\,i_1\,R\left(1+k\log^2\frac{i_1+R}{i_1-R}\right)\geq 0,
 \end{align*}
 which is true by Lemma \ref{thm:ungl7}.
\end{proof}

\begin{lemma}
\label{thm:ungl13}Let $k\geq\tel8$. Then
 \(
  2\,k\, z\log^2\frac{1+z}{1-z}+2\,z-\log\frac{1+z}{1-z}\geq 0,
 \)
 if $z\in (0,1)$.
\end{lemma}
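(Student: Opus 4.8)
The plan is to collapse this apparently two-parameter inequality into a single hyperbolic inequality whose derivative telescopes. First, for $z\in(0,1)$ one has $z>0$ and $\log^2\tfrac{1+z}{1-z}\ge 0$, so the left-hand side
\[
  P_k(z):=2\,k\,z\log^2\tfrac{1+z}{1-z}+2z-\log\tfrac{1+z}{1-z}
\]
is nondecreasing in $k$; hence it suffices to prove $P_{1/8}(z)\ge 0$, i.e. to treat the borderline value $k=\tfrac18$.

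Next I would substitute $z=\tanh u$, a bijection of $(0,\infty)$ onto $(0,1)$, which satisfies $\tfrac{1+z}{1-z}=\tfrac{1+\tanh u}{1-\tanh u}=e^{2u}$, so that $\log\tfrac{1+z}{1-z}=2u$. (In the notation of this appendix this amounts to taking $a=e^\xi$ and setting $u=\xi/2$.) With $k=\tfrac18$ the inequality $P_{1/8}(z)\ge 0$ becomes
\[
  \tfrac14\,(2u)^2\tanh u+2\tanh u-2u\ge 0
  \quad\Longleftrightarrow\quad
  (u^2+2)\tanh u-2u\ge 0\qquad(u\ge 0).
\]

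The key step is to clear the denominator \emph{before} differentiating: multiplying by $\cosh u>0$ reduces the claim to
\[
  G(u):=(u^2+2)\sinh u-2u\cosh u\ \ge\ 0\qquad(u\ge 0).
\]
Here $G(0)=0$, and differentiating gives the clean cancellation
\[
  G'(u)=\bigl[2u\sinh u+(u^2+2)\cosh u\bigr]-\bigl[2\cosh u+2u\sinh u\bigr]=u^2\cosh u\ \ge\ 0,
\]
so $G$ is nondecreasing on $[0,\infty)$ and therefore $G(u)\ge G(0)=0$. Undoing the substitutions yields the lemma.

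I expect the only real obstacle to be spotting this manoeuvre, since cruder estimates miss the small-$z$ regime: a naive AM--GM applied to $\tfrac14 zL$ and $2z/L$ (after dividing $P_{1/8}$ by $L:=\log\tfrac{1+z}{1-z}$) yields only the bound $\sqrt{2}\,z-1$, which is nonnegative for $z\ge 1/\sqrt2$ but useless near $z=0$; and replacing $\log\tfrac{1+z}{1-z}$ by an elementary rational majorant also fails for small $z$, because the inequality is already tight to order $z^3$ there. Multiplying through by $\cosh u$ is precisely what forces the derivative to collapse to the manifestly nonnegative $u^2\cosh u$, after which the conclusion is immediate. If one prefers to stay in the variables already introduced in the appendix, the identical computation goes through directly for $\xi=\log\tfrac{1+z}{1-z}$, clearing denominators with $\cosh\tfrac\xi2$.
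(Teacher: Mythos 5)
Your proof is correct, but it follows a genuinely different route from the paper's. The paper keeps $k$ general throughout and runs a nested monotonicity argument directly in $z$: it checks that the expression vanishes as $z\searrow 0$, rewrites its derivative as $2k\log^2\tfrac{1+z}{1-z}+\tfrac{2z}{1-z^2}\bigl(4k\log\tfrac{1+z}{1-z}-z\bigr)$, and then shows the bracket is nonnegative by a second round of the same argument, where the hypothesis enters only at the innermost step via $\tfrac{d}{dz}\bigl(4k\log\tfrac{1+z}{1-z}-z\bigr)=\tfrac{z^2+8k-1}{1-z^2}\geq 0$. You instead first use monotonicity in $k$ to reduce to the borderline case $k=\tfrac18$, then apply the substitution $z=\tanh u$ (so $\log\tfrac{1+z}{1-z}=2u$) and clear the denominator with $\cosh u$, after which a single differentiation collapses to the manifestly nonnegative $G'(u)=u^2\cosh u$; all of these steps check out, and $G(0)=0$ closes the argument. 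What your approach buys is brevity and transparency: one differentiation instead of two nested ones, and an explicit, sign-obvious derivative; what the paper's approach buys is that it stays entirely in the original variables and notation used throughout the appendix (the same two-level scheme it applies in the neighbouring lemmas), with the role of $k\geq\tfrac18$ isolated in a single rational expression. One peripheral remark in your write-up is inaccurate, though it does not affect the proof: at $k=\tfrac18$ the expression is not ``tight to order $z^3$'' near $z=0$ --- its Taylor expansion is $\tfrac{z^3}{3}+O(z^5)$, and the small-$z$ leading coefficient $(8k-\tfrac23)$ would in fact remain nonnegative down to $k=\tfrac1{12}$; the threshold $\tfrac18$ is what both your method (via the reduction step) and the paper's (via $z^2+8k-1\geq 0$) require, not what the local expansion forces.
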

\begin{proof} We have
 $\limzn \left[2\,k\, z\log^2\frac{1+z}{1-z}+2\,z-\log\frac{1+z}{1-z}\right]=0$
 and
 \begin{align*}
  \frac{d}{dz}\left[2\,k\,
z\,\log^2\frac{1+z}{1-z}+2\,z-\log\frac{1+z}{1-z}\right]&=2\,k\,\log^2\frac{1+z}{1-z
}+\frac{8kz}{1-z^2}\log\frac{1+z}{1-z}+2-\frac2{1-z^2}\\
  &=2\,k\,\log^2\frac{1+z}{1-z}+\frac{2z}{1-z^2}(4\,k\,\log\frac{1+z}{1-z}-z).
  \end{align*}
  The derivative is nonnegative, because
  \(
   \limzn \left(4\,k\log\frac{1+z}{1-z}-z\right)\!\!=\!0
  \)
  and
  \(
   \frac{d}{dz}\left(\,{4\,k\,\log\frac{1+z}{1-z}-z}\right)\!=\!\frac{z^2+8\,k-1}{1-z^2}\geq 0.\qedhere
  \)
\end{proof}

\begin{lemma} Let $k>\tel8$. For all $\i_1,i_2\in {D(i_1,i_2)}$, we have
\label{thm:ungl10}
 \(
  2\,k \,R\log^2\frac{i_1+R}{i_1-R}+2\,R-i_1\log\frac{i_1+R}{i_1-R}\geq 0.
 \)
\end{lemma}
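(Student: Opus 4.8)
The claimed inequality is simply Lemma \ref{thm:ungl13} transcribed into the invariants $i_1,i_2$, so the proof will consist of a single change of variables. First I would observe that $(i_1,i_2)\in{D(i_1,i_2)}$ means $i_1>0$, $i_2>0$ and $i_1^2-4\,i_2>0$; consequently $R=\sqrt{i_1^2-4\,i_2}$ satisfies $0<R<i_1$, and the quantity
\[
 z:=\frac{R}{i_1}=\frac{\sqrt{i_1^2-4\,i_2}}{i_1}
\]
lies in the open interval $(0,1)$. Moreover $(i_1,i_2)\mapsto z$ ranges over all of $(0,1)$ as $(i_1,i_2)$ runs through ${D(i_1,i_2)}$, which is the exact domain on which Lemma \ref{thm:ungl13} is stated. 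With this substitution,
\[
 \frac{i_1+R}{i_1-R}=\frac{i_1\,(1+z)}{i_1\,(1-z)}=\frac{1+z}{1-z}.
\]

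Next I would divide the target inequality by $i_1>0$. Using the identity above, it becomes precisely
\[
 2\,k\,z\log^2\frac{1+z}{1-z}+2\,z-\log\frac{1+z}{1-z}\geq 0,\qquad z\in(0,1),
\]
which is exactly the assertion of Lemma \ref{thm:ungl13}, valid for every $k\geq\tel8$ and hence \emph{a fortiori} for $k>\tel8$. Reversing the elementary (and invertible) step — multiplying back by $i_1$ — yields the statement of Lemma \ref{thm:ungl10}.

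I do not expect any real obstacle here: the analytic content, namely the observation that $2\,k\,z\log^2\frac{1+z}{1-z}+2\,z-\log\frac{1+z}{1-z}$ vanishes as $z\searrow 0$ and that its $z$-derivative is nonnegative on $(0,1)$ when $k\geq\tel8$, has already been carried out in the proof of Lemma \ref{thm:ungl13}. The only point requiring a word of care is that the map $(i_1,i_2)\mapsto z=R/i_1$ sends ${D(i_1,i_2)}$ onto the full interval $(0,1)$, which is immediate from $0<R<i_1$; thus no boundary case is lost in the reduction.
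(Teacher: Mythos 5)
Your proposal is correct and coincides with the paper's own argument: divide by $i_1>0$, substitute $z=R/i_1\in(0,1)$ so that $\frac{i_1+R}{i_1-R}=\frac{1+z}{1-z}$, and invoke Lemma \ref{thm:ungl13}. Your additional remarks on the range of $z$ and on $k>\tel8$ versus $k\geq\tel8$ are fine but not essential.
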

\begin{proof}
 Divide by $i_1$ and use $z=\frac{R}{i_1}$ to obtain
 \(
  2\,k\, z\log^2\frac{1+z}{1-z}+2\,z-\log\frac{1+z}{1-z}\geq 0,
 \)
 which is true by Lemma \ref{thm:ungl13}.
\end{proof}
\begin{remark} In the new notations,  for all $(i_1,i_2)\in {D(i_1,i_2)}$, we have
 \begin{align*}
  \frac{\partial^2 \psi}{\partial i_1^2}(i_1,i_2)&=\frac{8\,i_2\,e^{\frac{k}{2}\log^2\frac{i_1+R}{i_1-R}}}
  {\sqrt{i_1^2-4\,i_2}(i_1^2-4\,i_2)(-1)
  (\sqrt{i_1^2-4\,i_2}-i_1)(\sqrt{i_1^2-4\,i_2}+i_1)}\cdot\\
  &\qquad\cdot\left(2\,k\,R\log^2\frac{i_1+R}{i_1-R}
+2\,R -i_1\log\frac{i_1+R}{i_1-R}\right),
 \end{align*}
 and
\begin{align*}
 {\rm det}\,D^2\psi(i_1,i_2)&=128\,k^2 i_2\log\frac{i_1+R}{i_1-R}\tel{(i_1^2-4\,i_2)^2}
 \left(-\tel{(\sqrt{i_1^2-4\,i_2}+i_1)^3(\sqrt{i_1^2-4\,i_2}-i_1)^3}\right)\cdot\\
 &\quad\cdot\left[2\,i_2\log\frac{i_1+R}{i_1-R}+k \,i_1\,R\,\log^2\frac{i_1+R}{i_1-R}
 +i_1\,R-i_1^2\log\frac{i_1+R}{i_1-R}\right]e^{k\,\log^2\frac{i_1+
 \sqrt{i_1^2-4\,i_2}}{i_1-\sqrt{i_1^2-4\,i_2}}}.
\end{align*}
Both above quantities are positive  if and only if $k\geq \frac{1}{3}$ by Lemmas  \ref{thm:ungl10} and \ref{thm:ungl8}.
\end{remark}

\end{document}